\numberwithin{equation}{subsection}
\renewcommand{\theequation}{\thesection\alph{subsection}\arabic{equation}}
\theoremstyle{definition}
\newtheorem{theorem}[equation]{Theorem}
\newtheorem{lemma}[equation]{Lemma}
\newtheorem{corollary}[equation]{Corollary}
\newtheorem{definition}[equation]{Definition}
\newtheorem{proposition}[equation]{Proposition}
\newtheorem{remark}[equation]{Remark}
\newtheorem{example}[equation]{Example}
\newtheorem{assumption}[equation]{Assumption}
\renewcommand{\phi}{\varphi}
\newcommand{\D}{\mathrm{d}}
\newcommand{\E}{\mathrm{e}}
\renewcommand{\(}{\bigl(}
\renewcommand{\)}{\bigr)\vphantom{)}}
\newcommand{\imply}{\;\;\;\Longrightarrow\;\;\;}
\newcommand{\Median}{\operatorname{Median}}
\newcommand{\const}{\operatorname{const}}
\newcommand{\One}{{1\hskip-2.5pt{\rm l}}}
\newcommand{\eps}{\varepsilon}
\newcommand{\si}{\sigma}
\newcommand{\ga}{\gamma}
\newcommand{\de}{\delta}
\newcommand{\al}{\alpha}
\newcommand{\be}{\beta}
\newcommand{\cO}{\mathcal O}
\newcommand{\la}{\lambda}
\newcommand{\Ex}{\mathbb E\,}
\newcommand{\R}{\mathbb R}
\newcommand{\Z}{\mathbb Z}
\newcommand{\PR}[1]{\mathbb{P}\mskip1.5mu\bigg(\mskip1.5mu#1\mskip1.5mu\bigg)}
\newcommand{\sif}{$\sigma$\nobreakdash-\hspace{0pt}field}
\newcommand{\myatop}[2]{{\genfrac{}{}{0pt}{}{#1}{#2}}}
\newcommand{\close}[1]{$#1$\nobreakdash-\hspace{0pt}close}
\begin{document}

\title{Linear response and moderate deviations:\\ hierarchical
  approach. I}

\author{Boris Tsirelson}

\date{}
\maketitle

\begin{abstract}
The Moderate Deviations Principle (MDP) is well-understood for sums of
independent random variables, worse understood for stationary random
sequences, and scantily understood for random fields. Here it is
established for a new class of random processes. The approach is
promising also for random fields.
\end{abstract}

\setcounter{tocdepth}{2}
\tableofcontents

\numberwithin{equation}{section}

\section[Definition, and main result formulated]
  {\raggedright Definition, and main result formulated}
\label{sect1}
We examine a class of stationary processes $ X = (X_t)_{t\in\R} $, but
we are interested only in integrals $ \int_\al^\be X_t \, \D t $ rather
than ``individual'' random variables $ X_t $. Continuity of sample
functions is irrelevant as long as these integrals are
well-defined. That is, we merely deal with a two-parameter family of
random variables, denoted (if only for convenience) by
$ \( \int_\al^\be X_t \, \D t \)_{\al<\be} $ and satisfying
\begin{equation}
\int_\al^\be X_t \, \D t + \int_\be^\ga X_t \, \D t = \int_\al^\ga X_t \, \D t
\quad \text{for } -\infty<\al<\be<\ga<\infty \, .
\end{equation}
Stationarity means measure preserving time shifts that send $ \int_\al^\be
X_t \, \D t $ to $ \int_{\al+s}^{\be+s} X_t \, \D t $. Thus, the
distribution of $ \int_\al^\be X_t \, \D t $ depends on $ \be-\al $
only, and we require it to depend measurably:
\begin{equation}\label{01.2}
\text{\small the distribution of } \int_0^r X_t \, \D t \text{ \small
is a measurable function of } r \, ;
\end{equation}
that is, the function $ r \mapsto \Ex \phi \( \int_0^r X_t \, \D t \)
$ is measurable for every bounded continuous $ \phi : \R \to \R $ (or
equivalently, every bounded Borel measurable $\phi$; or just $ \phi
= \One_{(-\infty,s]} $ for all $ s \in \R $; etc). We say that $X$
is \emph{centered,} if
\begin{equation}
\Ex \bigg| \int_\al^\be X_t \, \D t \bigg| < \infty \quad \text{and} \quad \Ex
\int_\al^\be X_t \, \D t = 0 \quad \text{whenever } \al<\be \, .
\end{equation}
We are interested first of all in correlated processes $X$ with continuous
sample paths $ t \mapsto X_t $. However, our general framework admits
uncorrelated processes such as the white noise and the centered
Poisson point process, even though their ``sample paths'' cannot be
interpreted as (usual) functions. For the white noise $X$ the random
variable $ \int_\al^\be X_t \, \D t $ has the normal distribution $
N(0,\be-\al) $. For the centered Poisson point process $X$ the random
variable $ (\be-\al) + \int_\al^\be X_t \, \D t $ has the Poisson
distribution $ P(\be-\al) $.

Our idea of ``not too much correlated'' process is formalized in the
following definition; there, all the four processes ($ X, X^0, X^-,
X^+ $) are interpreted as above. Independence of processes is
independence of the generated \sif s; and the \sif\ generated by $X$
is (by definition) the \sif\ generated by random variables
$ \int_\al^\be X_t \, \D t $. Two processes $X$ and $Y$ are called
identically distributed, if the random vectors
$ \( \int_{\al_1}^{\be_1} X_t \, \D t, \dots, \int_{\al_n}^{\be_n}
X_t \, \D t \) $ and $ \( \int_{\al_1}^{\be_1} Y_t \, \D
t, \dots, \int_{\al_n}^{\be_n} Y_t \, \D t \) $ are identically
distributed whenever $ \al_1<\be_1 $, \dots, $ \al_n < \be_n $.

\begin{definition}\label{definition1}
A centered stationary random process $ X $ satisfying \eqref{01.2}
is \emph{splittable,} if there exist $ r>0 $ and $ \eps > 0 $ such
that $ \Ex \exp \eps | \int_0^r X_t \, \D t | < \infty $,\,\footnote{%
 See also Proposition \ref{2d2} and Remark \ref{2d2a}.}
and there
exists (on some probability space) a triple of random processes $ X^0,
X^-, X^+ $ such that

(a) the two processes $ X^-, X^+ $ are independent;

(b) the four processes $ X, X^0, X^-, X^+ $ are identically distributed;

(c) there exists a number $ c>0 $ such that for all $ a,b>0 $,
\[
\Ex \exp \bigg( c \bigg| \int_{-a}^0 X^-_t \, \D t - \int_{-a}^0 X^0_t \, \D t \bigg|
 + c \bigg| \int_0^b X^+_t \, \D t - \int_0^b X^0_t \, \D t \bigg| \bigg) \le 2 \, .
\]
\end{definition}

\begin{remark}
The class of splittable processes is invariant under rescaling on both
axes ($t$ and $x$), that is, under the transition from $X$ to $Y$
where $ Y_t = a X_{bt} $ for given parameters $ a,b \in (0,\infty) $
(interpreted as $ \int_\al^\be Y_t \, \D t = \frac a
b \int_{b\al}^{b\be} X_t \, \D t $, of course). The same holds for $
a,b \in \R \setminus \{0\} $ (interpreted as $ \int_\al^\be Y_t \, \D
t = -\frac a b \int_{b\be}^{b\al} X_t \, \D t $, if $b<0$).
\end{remark}

\begin{theorem}[\emph{``linear response''}]\label{theorem1}
The following limit exists for every splittable random
process $ X $:
\[
\lim_\myatop{ r\to\infty, \la\to0 }{ \la\log r\to0 }
\frac1{ r\la^2 } \log \Ex \exp \la \int_0^r X_t \, \D t \, .
\]
\end{theorem}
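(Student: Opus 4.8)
\emph{Proof sketch.}
Write $\psi_r(\lambda):=\log\Ex\exp\(\lambda\int_0^r X_t\,\D t\)$; it is convex, nonnegative by Jensen, and vanishes to second order at $\lambda=0$ since $X$ is centered, so the quantity to study is $Q(r,\lambda):=\psi_r(\lambda)/(r\lambda^2)$. The plan is to show the limit exists and equals $\sigma^2/2$, where $\sigma^2:=\lim_{r\to\infty}\Var\!\(\int_0^r X_t\,\D t\)/r$; the backbone is a \emph{reduction of $\psi_r$ to a sum of i.i.d.\ blocks at an auxiliary scale $r_0$, up to a single gluing correction}. Splitting $[0,r]$ by a balanced binary tree of depth $\approx\log_2(r/r_0)$ and invoking Definition~\ref{definition1}(c) at each node should give a distributional identity $\int_0^r X_t\,\D t\overset{d}{=}\sum_{k=1}^{n}B_k-E$, where $n\approx r/r_0$, the $B_k$ are i.i.d.\ copies of $\int_0^{r_0}X_t\,\D t$, and $E$ is the sum of the $n-1$ node errors. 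I would then group these errors by tree level (within one level they are independent, sitting in disjoint, independently randomized sub-blocks) and Hölder across the $\approx\log n$ levels to get a \emph{quadratic} bound
\[
\psi_E(\mu):=\log\Ex\E^{\mu E}\ \le\ C\,n\log n\;\mu^2 ,\qquad |\mu|\le c_1/\log n ,
\]
the quadratic dependence on $\mu$ being precisely where the \emph{centeredness} of each gluing error enters.

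Next I would collect the local information. The base exponential moment in Definition~\ref{definition1} makes $\psi_{r_0}$ real-analytic near $0$, with $\psi_{r_0}(\lambda)=\tfrac12\Var\!\(\int_0^{r_0}X_t\,\D t\)\lambda^2+(\text{cubic remainder})$; in the ``Gaussian window'' $|\lambda|\sqrt{r_0}\to0$ (with a little slack for the remainder's polylogarithmic weight) this gives $Q(r_0,\lambda)=\Var\!\(\int_0^{r_0}X_t\,\D t\)/(2r_0)+o(1)$. Separately, applying the same splitting to second moments gives the near-additivity $v(a+b)=v(a)+v(b)+O\(\sqrt{v(a)}+\sqrt{v(b)}+1\)$ with $v(r):=\Var\!\(\int_0^r X_t\,\D t\)$ (Cauchy--Schwarz on the gluing errors); a bootstrap from the reduction yields $\psi_r(\lambda)\le\const\cdot r\lambda^2$ in the regime, hence $v(r)=O(r)$, and a Fekete argument then gives $v(r)/r\to\sigma^2\in[0,\infty)$. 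I would treat $\sigma^2>0$ as the main case and dispatch $\sigma^2=0$ (where the claim is $Q\to0$) separately.

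To finish, I would fix $(r,\lambda)$ in the regime and pick an auxiliary scale $r_0=r_0(r,\lambda)$ in the window $\log r\ll r_0\ll\min(r,\lambda^{-2})$ — nonempty precisely because $\lambda\log r\to0$ — so arranged that $|\lambda|\sqrt{r_0}\to0$, $n=r/r_0\to\infty$, $|\lambda|\log n\to0$ and $v(r_0)\gg\log n$. Applying Hölder with exponent $p=1+\de$ (conjugate $q=p/\de$) to $\sum_k B_k-E$ — the block part contributing $\E^{n\psi_{r_0}(\cdot)}$, the error part controlled by the quadratic bound on $\psi_E$ — one gets, for a suitable $\de=\de(r,\lambda)\to0$ and in both directions,
\[
\psi_r(\lambda)\ =\ n\,\psi_{r_0}(\lambda)\,\bigl(1+O(\de)\bigr).
\]
Dividing by $r\lambda^2=n r_0\lambda^2$ then gives $Q(r,\lambda)=Q(r_0,\lambda)\bigl(1+O(\de)\bigr)=\bigl(\Var\!\(\int_0^{r_0}X_t\,\D t\)/(2r_0)+o(1)\bigr)\bigl(1+o(1)\bigr)\to\sigma^2/2$; since this holds along every admissible sequence, the limit exists.

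The hard part will be the first step. A single split already costs $O(|\lambda|)$ in the exponent via Hölder — unavoidable, since the trimmed error is genuinely correlated with its block — so a flat decomposition into $\approx r/r_0$ elementary splits would cost $O(r|\lambda|/r_0)$, dwarfing the signal $\asymp r\lambda^2$ as soon as $\lambda\ll1$. Organizing the splits into a logarithmic-depth hierarchy and bundling \emph{all} the gluing errors into the single estimate $\psi_E(\mu)=O(n\log n\,\mu^2)$ on $|\mu|\lesssim1/\log n$ — quadratic in $\mu$, thanks to centeredness — is what renders the correction negligible after division by $r\lambda^2$. The residual work is the bookkeeping of the three competing scales $r_0$, $|\lambda|^{-1}$, $r$, and it is here that the hypothesis $\lambda\log r\to0$ leaves exactly enough room.
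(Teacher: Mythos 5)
Your proposal rests on the same two engines as the paper — a binary hierarchical splitting driven by Definition~\ref{definition1}(c), and the observation that the centeredness of each gluing error (via Lemma~\ref{1aa2}-type bounds, $\log\Ex\E^{\mu Z}\le\mu^2$ for $|\mu|\le1$) lets H\"older absorb the errors at cost $O(\mu^2)$ rather than $O(|\mu|)$. But the organization is genuinely different. The paper never instantiates the whole tree: it proves a single doubling inequality $f_{2r}(\lambda)\lessgtr\frac{2}{p}f_r(\frac{p\lambda}{\sqrt2})\pm\frac{p}{p-1}\frac{\lambda^2}{2r}$ (Prop.~\ref{2a9}, from \emph{one} split and \emph{one} H\"older), then iterates this as a scalar functional inequality along Lemmas~\ref{1a1}--\ref{1a3} and \ref{1b1}--\ref{1b5}, with the H\"older exponent $p$ chosen adaptively at each doubling (roughly $p_k-1\asymp 2^{-k/2}\eps$ or $\asymp|\lambda|$) rather than uniformly. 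Because the recursion is in marginal laws only, no joint coupling of all the $Z_{jk}$'s on one probability space is ever needed, and ``independence within a level'' never has to be stated or proved. Your flat decomposition $\int_0^r X\stackrel{d}{=}\sum_k B_k+E$ \emph{does} require building such a coupling (iterated regular conditional distributions, fresh independent randomness at each node) and does require the within-level independence claim; both are believable but neither is given, and they are genuine measure-theoretic work that the paper's recursive formulation is specifically designed to avoid.

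Your key estimate $\psi_E(\mu)\lesssim n\log n\,\mu^2$ for $|\mu|\lesssim 1/\log n$ (uniform H\"older exponent $\approx\log n$ across levels, quadratic bound within each level) is arithmetically correct assuming the within-level independence, and you correctly identify it as the crux; but it is only asserted, and this is precisely where most of the paper's Sections~\ref{sect2}b--c live. Also note two smaller gaps: the cubic-remainder control on $\psi_{r_0}$ needs a uniform exponential-moment bound at all scales $r_0$ (the paper gets this from Prop.~\ref{2d2} and Remark~\ref{2d2a}, not directly from the single exponential moment in Definition~\ref{definition1}), and the existence of $\sigma^2=\lim v(r)/r$ is by no means free (the paper devotes a lemma to showing $r\mapsto r\sigma^2_{2^\infty r}$ is additive and measurable; your ``Cauchy--Schwarz + Fekete'' path should work but is another unproved step). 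In short: same mechanism, different architecture; your route would need the coupling lemma, the within-level-independence lemma, and the quadratic bound on $\psi_E$ spelled out to be complete, and those together are roughly the size of the paper's Section~\ref{sect2}.
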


That is, for every $ \eps $ there exist $ R $ and $ \de $ such that the
given expression is \close{\eps} to the limit for all $ r \ge R $
and all $ \la \ne 0 $ such that $ |\la| \log r \le \de $.

We denote this limit by $ \si^2/2 $, $ \si \in [0,\infty) $.

\begin{corollary}[\emph{moderate deviations}]\label{corollary3}
Let $ X $ and $ \si $ be as above, and $ \si \ne 0 $. Then
\[
\lim_\myatop{ r\to\infty, c\to\infty }{ (c\log r)^2/r \to0 }
\frac1{c^2} \log \PR{ \int_0^r X_t \, \D t \ge c\si \sqrt r } = -\frac12 \, .
\]
\end{corollary}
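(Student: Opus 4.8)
The plan is to derive the moderate deviations statement from Theorem \ref{theorem1} by a Cramér--Chernoff argument, choosing the tilting parameter $\la$ in the regime where the ``linear response'' limit applies. Write $ S_r = \int_0^r X_t \, \D t $ and let $ \La_r(\la) = \log \Ex \exp(\la S_r) $, which is finite at least for $ \la $ in a fixed neighborhood of $0$ times a suitable power of $ r $; more precisely, splittability together with condition (c) of Definition \ref{definition1} and subadditivity of $ \la \mapsto \La_{nr_0}(\la) $ along a grid should give that $ \La_r(\la) < \infty $ for $ |\la| \le \eps_0 $, uniformly in $ r $, which is all that is needed since in the target regime $ \la \to 0 $. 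Theorem \ref{theorem1} then says $ \La_r(\la) = \tfrac12 \si^2 r \la^2 (1 + o(1)) $ as $ r \to \infty $, $ \la \to 0 $, $ \la \log r \to 0 $.

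For the upper bound on the probability, apply the exponential Chebyshev inequality: for any $ \la > 0 $,
\[
\PR{ S_r \ge c\si\sqrt r } \le \exp\( -\la c \si \sqrt r + \La_r(\la) \) \, .
\]
Choose $ \la = \la_r = c / (\si \sqrt r) $; then $ \la \to 0 $ and $ \la \log r = c \log r / (\si\sqrt r) \to 0 $ precisely because $ (c\log r)^2 / r \to 0 $, so Theorem \ref{theorem1} applies and gives $ \La_r(\la_r) = \tfrac12 c^2 (1 + o(1)) $. Substituting, the exponent is $ -c^2 + \tfrac12 c^2 (1+o(1)) = -\tfrac12 c^2 (1 + o(1)) $, whence $ \limsup \tfrac1{c^2} \log \PR{ S_r \ge c\si\sqrt r } \le -\tfrac12 $.

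For the matching lower bound, the standard route is a change of measure: introduce the tilted law $ \D \Q_r = \exp(\la_r S_r - \La_r(\la_r)) \, \D \mathbb P $ under which $ S_r $ is recentered near $ \La_r'(\la_r) $. One writes
\[
\PR{ S_r \ge c\si\sqrt r } \ge \E_{\Q_r} \exp\( -\la_r S_r + \La_r(\la_r) \) \One_{\{ c\si\sqrt r \le S_r \le c\si\sqrt r + \de_r \}} \, ,
\]
bounds the exponential below by $ \exp(-\la_r(c\si\sqrt r + \de_r) + \La_r(\la_r)) $ on that event, and must show the $ \Q_r $-probability of $ \{ S_r \in [c\si\sqrt r, c\si\sqrt r + \de_r] \} $ is not exponentially small in $ c^2 $. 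Since $ \La_r'(\la_r) = \si^2 r \la_r (1+o(1)) = c\si\sqrt r (1+o(1)) $ by differentiating the asymptotics (which needs a little care — one can use convexity of $ \La_r $ and the limit at nearby parameters $ \la_r(1\pm\eta) $ to sandwich $ \La_r' $ rather than differentiating an asymptotic relation directly), the target window sits right at the mean of $ S_r $ under $ \Q_r $, and a second-moment or Chebyshev estimate on $ \Var_{\Q_r}(S_r) = \La_r''(\la_r) \asymp \si^2 r $ — again obtained by sandwiching via convexity — shows this probability is bounded below by a constant (taking $ \de_r $ a large multiple of $ \si\sqrt r \cdot c^{-1} $, say, or even $ \de_r = c\si\sqrt r $). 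Then $ \la_r \de_r = O(c^2) \cdot o(1) $ or is absorbed into the $ o(c^2) $ error, and the exponent becomes $ -c^2 + \tfrac12 c^2(1+o(1)) = -\tfrac12 c^2(1+o(1)) $, giving $ \liminf \tfrac1{c^2}\log \PR{\cdots} \ge -\tfrac12 $.

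The main obstacle is the lower bound, specifically justifying that the first two derivatives of $ \La_r $ at $ \la_r $ have the expected sizes. Theorem \ref{theorem1} is a statement about $ \La_r $ itself, not its derivatives, so one cannot differentiate it termwise; instead one exploits that $ \La_r $ is convex and that the theorem's limit holds on a whole neighborhood of scales $ (\la_r(1+t))_{|t|\le\eta} $ simultaneously, converting the uniform convergence of the functions into control of difference quotients, hence of $ \La_r' $ and (by a second application) of $ \La_r'' $. Once these bounds are in hand the change-of-measure computation is routine. An alternative that avoids derivatives entirely is to prove the lower bound directly by a splitting/subadditivity argument paralleling the proof of Theorem \ref{theorem1}: cut $ [0,r] $ into $ \sim r / r' $ nearly independent blocks and use a local CLT or Gärtner--Ellis-type estimate on each, though this essentially re-proves part of the machinery and I expect the change-of-measure route to be shorter given that Theorem \ref{theorem1} is already available.
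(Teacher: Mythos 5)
Your proposal and the paper arrive at the same place but at different levels of abstraction: the paper's proof is essentially one line — it applies Theorem \ref{theorem1} at $\la_n = \la c_n/\sqrt{r_n}$, obtains $\frac1{c_n^2}\log\Ex\exp\(\la c_n r_n^{-1/2}\int_0^{r_n}X_t\,\D t\)\to\frac{\si^2}{2}\la^2$ for every $\la\in\R$, and then invokes the G\"artner--Ellis theorem (with scale $c_n$, speed $c_n^2$) as a black box, reading off the MDP with rate $x\mapsto x^2/(2\si^2)$ and evaluating it at the level $\si$. What you propose is to re-derive the content of G\"artner--Ellis by hand: the exponential Chebyshev upper bound plus an exponential change of measure for the lower bound. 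That is the same argument, just unpacked. Your upper bound is correct as written. For the lower bound, you correctly flag that one cannot differentiate the asymptotic relation for $\La_r$, and your convexity-sandwiching idea for $\La_r'$ is sound; but controlling $\La_r''$ is not actually needed, and is the heavier route. The standard G\"artner--Ellis lower bound sidesteps it: tilt by $\la_r = (1+\eta)c/(\si\sqrt r)$ so the tilted mean strictly exceeds $c\si\sqrt r$, then show the tilted $\Q_r$-probability of escaping the window $[c\si\sqrt r,(1+2\eta)c\si\sqrt r]$ is super-exponentially small in $c^2$ by applying the already-established \emph{upper} bound to the tilted family, whose normalized CGF limit is $\la\mapsto\frac{\si^2}2((\la+(1+\eta))^2-(1+\eta)^2)$; this uses only the limit of log-MGFs and convexity, never a second-derivative bound. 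Finally, a small but real slip: neither of your suggested window widths works. With $\de_r\sim\si\sqrt r/c$ the window is $1/c$ standard deviations wide and Chebyshev gives no lower bound on its mass; with $\de_r = c\si\sqrt r$ the extra cost $\la_r\de_r = c^2$ is \emph{not} $o(c^2)$ and spoils the exponent. The correct order is $\de_r\asymp\si\sqrt r$ (a constant number of standard deviations), giving $\la_r\de_r = O(c) = o(c^2)$. With those corrections your plan closes; it trades the paper's brevity (one citation) for a self-contained derivation.
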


Unfortunately, the region of moderate deviations ($ r \to \infty $, $ c \to
\infty $, $ { \frac{c^2}r \to 0 } $) is not covered. The condition $ \frac{ (c
\log r)^2 }{ r } \to 0 $ leaves a small gap between Corollary \ref{corollary3}
and large deviations ($ \frac{ c^2 } r = \const $).

\begin{corollary}\label{corollary4}
The distribution of $ r^{-1/2} \int_0^r X_t \, \D t $ converges (as
$ r \to \infty $) to the normal distribution $ N(0,\si^2) $.
\end{corollary}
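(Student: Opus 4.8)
The plan is to derive Corollary \ref{corollary4} from Theorem \ref{theorem1} by the classical moment-generating-function argument. Write $Z_r = r^{-1/2}\int_0^r X_t\,\D t$ and fix a real number $s\ne0$. Choosing $\la=\la_r:=s/\sqrt r$, one has $\la_r\to0$ and $\la_r\log r = s(\log r)/\sqrt r\to0$ as $r\to\infty$, so the couple $(r,\la_r)$ eventually lies in the region to which Theorem \ref{theorem1} applies; and since $r\la_r^2=s^2$, the theorem yields
\[
\frac1{s^2}\log\Ex\exp(sZ_r)=\frac1{r\la_r^2}\log\Ex\exp\Bigl(\la_r\int_0^r X_t\,\D t\Bigr)\longrightarrow\frac{\si^2}2\qquad(r\to\infty),
\]
so that $\Ex\exp(sZ_r)\to\exp(s^2\si^2/2)$ for every $s\in\R$ (trivially $1$ at $s=0$), i.e.\ the moment generating functions of $Z_r$ converge pointwise on all of $\R$ to $s\mapsto\exp(s^2\si^2/2)$, which is the moment generating function of $N(0,\si^2)$.

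Before passing to the conclusion I would check that $\Ex\exp(sZ_r)$ is finite for all sufficiently large $r$: this is already implicit in the very assertion of Theorem \ref{theorem1} (otherwise the $\eps$-closeness statement following it would be void), and it is also what the exponential-integrability clause of Definition \ref{definition1} is for; cf.\ Proposition \ref{2d2}. Granting this, Corollary \ref{corollary4} follows from the standard fact that pointwise convergence of moment generating functions on a neighbourhood of $0$ to the moment generating function of a probability measure implies weak convergence to that measure.

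If one prefers not to invoke that fact as a black box, it unwinds as follows: for $s$ in a fixed compact interval around $0$ the limit above gives $\sup_r\Ex\exp(sZ_r)<\infty$, so $(Z_r)_r$ is tight and the families $(|Z_r|^k)_r$ are uniformly integrable for every $k$; hence any subsequential weak limit $Z$ inherits $\Ex\exp(sZ)=\exp(s^2\si^2/2)$ for small $s$, forcing $Z\sim N(0,\si^2)$ by uniqueness of the moment generating function, and since every subsequence has this same limit the whole family converges. I do not anticipate a genuine obstacle in this corollary; the only point demanding care is the finiteness and uniform control of $\Ex\exp(sZ_r)$, which is already delivered by Theorem \ref{theorem1} together with the integrability hypothesis of Definition \ref{definition1}.
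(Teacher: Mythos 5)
Your argument is the same as the paper's: apply Theorem \ref{theorem1} with $\la = s/\sqrt r$ (so that $r\la^2=s^2$ and $\la\log r\to0$), obtain $\Ex\exp(sZ_r)\to\exp(s^2\si^2/2)$ for all $s$, and conclude weak convergence from pointwise convergence of moment generating functions, for which the paper cites Billingsley. Your extra paragraph unwinding the MGF-to-weak-convergence implication is correct but just expands on the black box the paper invokes.
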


\numberwithin{equation}{subsection}
\renewcommand{\theequation}{\thesection\alph{subsection}\arabic{equation}}

\section[A chain of H\"older inequalities]
  {\raggedright A chain of H\"older inequalities}
\label{sect2}
\subsection{From a splittable process to cumulant generating functions}

\begin{assumption}\label{2a1}
We restrict ourselves to splittable processes $X$ that satisfy
Def.~\ref{definition1} with $ c=1 $. (This can be ensured, multiplying
a given splittable process by a small positive number).
\end{assumption}

\begin{remark}
Assumption \ref{2a1} is invariant under the transition from $ (X_t)_t
$ to $ (Y_t)_t = (aX_{bt})_t $ provided that $ |a|=|b| $.
\end{remark}

We consider random variables
\begin{equation}
S_r = \frac1{\sqrt r} \int_0^r X_t \, \D t \qquad \text{for } r \in
(0,\infty) \, ,
\end{equation}
and their cumulant generating functions
\begin{equation}\label{2a4}
f_r(\la) = \log \Ex \exp \la S_r \, .
\end{equation}
Note that $ f_r(\la) \ge 0 $, since $ \Ex \exp \la S_r \ge \Ex(1+\la
S_r) = 1 $.

\begin{remark}\label{2a5}
If $ Y_t = aX_{at} $, then $ S_r^{(Y)} = \sqrt a S_{ar}^{(X)} $ and $
f_r^{(Y)}(\la) = f_{ar}^{(X)}(\la \sqrt a) $.
\end{remark}

\begin{example}
(a) If $X$ is the white noise, then $ f_r(\la) = \frac12 \la^2
$. Also, in this case $ (aX_{at})_t $ is distributed like $ (\sqrt a
X_t)_t $.

(b) If $X$ is the centered Poisson point process, then\\
$ f_r(\la) = \(
\E^{\la/\sqrt r} - \frac{\la}{\sqrt r} - 1 \) r $. Note that $
f_r(\la) \to \frac12 \la^2 $ as $ r \to \infty $.
\end{example}

\begin{lemma}\label{1aa1}
For every $ r \in (0,\infty) $ there exist random variables $ U,V,W,Z
$ (on some probability space) such that

$ U,V $ are independent;

$ S_r, U, V $ are identically distributed;

$ S_{2r} $ and $ W $ are identically distributed;

$ \sqrt{2r} W = \sqrt r U + \sqrt r V + Z $;

$ \Ex \exp |Z| \le 2 $.
\end{lemma}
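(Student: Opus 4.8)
The plan is to read the four random variables off Definition~\ref{definition1} directly, using $c=1$ (legitimate by Assumption~\ref{2a1}) and the parameter choice $a=b=r$. First I would invoke the definition to get, on some probability space, processes $X^0,X^-,X^+$ such that $X^-$ and $X^+$ are independent, all four of $X,X^0,X^-,X^+$ are identically distributed, and
\[
\Ex \exp\bigg( \bigg|\int_{-a}^0 X^-_t\,\D t - \int_{-a}^0 X^0_t\,\D t\bigg| + \bigg|\int_0^b X^+_t\,\D t - \int_0^b X^0_t\,\D t\bigg|\bigg)\le 2
\]
for all $a,b>0$. Then I would set
\[
U = \frac1{\sqrt r}\int_{-r}^0 X^-_t\,\D t,\qquad V = \frac1{\sqrt r}\int_0^r X^+_t\,\D t,\qquad W = \frac1{\sqrt{2r}}\int_{-r}^r X^0_t\,\D t,
\]
and define $Z = \sqrt{2r}\,W - \sqrt r\,U - \sqrt r\,V$, so that the identity $\sqrt{2r}\,W = \sqrt r\,U + \sqrt r\,V + Z$ holds by construction.

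Next I would check the distributional assertions. Independence of $U$ and $V$ is immediate from independence of $X^-$ and $X^+$. Since $X^-$ is distributed like $X$, the random variable $\int_{-r}^0 X^-_t\,\D t$ has the distribution of $\int_{-r}^0 X_t\,\D t$, which by stationarity (a time shift by $r$) coincides with that of $\int_0^r X_t\,\D t$; hence $U$ is distributed like $S_r$, and the same argument gives $V$ distributed like $S_r$, so $S_r,U,V$ are identically distributed. Likewise $X^0$ is distributed like $X$, so $\int_{-r}^r X^0_t\,\D t$ has the distribution of $\int_{-r}^r X_t\,\D t$, which by the shift by $r$ equals that of $\int_0^{2r}X_t\,\D t$; thus $W$ is distributed like $\frac1{\sqrt{2r}}\int_0^{2r}X_t\,\D t = S_{2r}$.

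It remains to estimate $\Ex\exp|Z|$. Applying the additivity relation $\int_{-r}^0+\int_0^r=\int_{-r}^r$ to each of the processes $X^0$, $X^-$, $X^+$ gives
\[
Z = \bigg(\int_{-r}^0 X^0_t\,\D t - \int_{-r}^0 X^-_t\,\D t\bigg) + \bigg(\int_0^r X^0_t\,\D t - \int_0^r X^+_t\,\D t\bigg),
\]
so the triangle inequality bounds $|Z|$ by the exponent appearing in part~(c) of Definition~\ref{definition1} with $a=b=r$. Monotonicity of $\exp$ together with that inequality then yields $\Ex\exp|Z|\le 2$, completing the proof.

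There is no deep obstacle here; the lemma is an unpacking of the definition. The only points requiring care are the interval bookkeeping — matching $[-r,0]$, $[0,r]$ and $[-r,r]$ to $[0,r]$ and $[0,2r]$ through stationarity — and making sure that the residual $Z$ is \emph{exactly} the quantity controlled by condition~(c), rather than merely dominated by a constant multiple of it, so that the bound $2$ (and not $2^{\text{something}}$) is obtained; the normalization $c=1$ from Assumption~\ref{2a1} is precisely what makes this work.
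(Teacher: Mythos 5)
Your proposal is correct and follows essentially the same path as the paper: same choice of $U,V,W,Z$ from the triple $X^0,X^-,X^+$ of Definition~\ref{definition1}, same stationarity argument for the distributional claims, and the same use of condition~(c) (with $c=1$ by Assumption~\ref{2a1}) to bound $\Ex\exp|Z|$. Your bound on $|Z|$ is in fact a slightly more careful rendering than the paper's—you bound it directly by the triangle inequality as $\bigl|\int_{-r}^0 X^-_t\,\D t-\int_{-r}^0 X^0_t\,\D t\bigr|+\bigl|\int_0^r X^+_t\,\D t-\int_0^r X^0_t\,\D t\bigr|$, i.e.\ exactly the exponent in (c) with $a=b=r$, whereas the paper's intermediate step written as $\int_{-r}^0|X^-_t-X^0_t|\,\D t+\int_0^r|X^+_t-X^0_t|\,\D t$ implicitly leans on sample-path notation that the general framework does not strictly provide.
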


\begin{proof}
We take processes $ X^0, X^-, X^+ $ as in Def.~\ref{definition1} and
let
\[
U = \frac1{\sqrt r} \int_{-r}^0 X_t^- \, \D t \, , \quad
V = \frac1{\sqrt r} \int_{0}^r X_t^+ \, \D t \, , \quad
W = \frac1{\sqrt{2r}} \int_{-r}^r X_t^0 \, \D t
\]
and $ Z = \sqrt{2r} W - \sqrt r U - \sqrt r V $,
then $ |Z| \le \int_{-r}^0 | X_t^- - X_t^0 | \, \D t + \int_0^r |
X_t^+ - X_t^0 | \, \D t $, thus, $ \Ex \exp |Z| \le \Ex \exp \(
\int_{-\infty}^0 | X_t^- - X_t^0 | \, \D t + \int_0^\infty | X_t^+ -
X_t^0 | \, \D t \) \le 2 $.
\end{proof}

Here is a general fact on cumulant generating functions.

\begin{lemma}\label{1aa2}
If a random variable $Z$ satisfies $ \Ex \exp |Z| \le 2 $ and $ \Ex Z
= 0 $, then
\[
\log \Ex \exp \la Z \le \la^2 \qquad \text{for all } \la \in [-1,1] \,
.
\]
\end{lemma}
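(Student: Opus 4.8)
The plan is to expand the exponential as a power series, integrate term by term, and exploit the constraint $ |\la|\le1 $ to bound every term of degree $ \ge2 $ by $ \la^2 $ times a convergent series whose sum is controlled by the hypothesis $ \Ex\exp|Z|\le2 $.

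First I would record the integrability facts that make everything well-defined: since $ \Ex\exp|Z|<\infty $, all moments $ \Ex|Z|^k $ are finite, $ \Ex Z $ makes sense, and $ \exp(\la Z) $ is integrable for every $ \la\in[-1,1] $ because $ \exp(\la Z)\le\exp|Z| $. Next I would write $ \exp(\la Z)=\sum_{k\ge0}\la^kZ^k/k! $ and justify integrating term by term by dominated convergence applied to the partial sums, using the dominating function $ \exp|Z| $ (indeed $ \bigl|\sum_{k\le n}(\la Z)^k/k!\bigr|\le\exp|\la Z|\le\exp|Z| $). Together with $ \Ex Z=0 $ this yields
\[
\Ex\exp(\la Z)=1+\sum_{k\ge2}\frac{\la^k\,\Ex Z^k}{k!}\,.
\]
For the main estimate I would use that $ |\la|\le1 $ forces $ |\la|^k\le\la^2 $ for all $ k\ge2 $, hence
\[
\biggl|\,\sum_{k\ge2}\frac{\la^k\,\Ex Z^k}{k!}\,\biggr|
\le\la^2\sum_{k\ge2}\frac{\Ex|Z|^k}{k!}
=\la^2\bigl(\Ex\exp|Z|-1-\Ex|Z|\bigr)
\le\la^2\bigl(\Ex\exp|Z|-1\bigr)\le\la^2\,,
\]
where the next-to-last step uses $ \Ex|Z|\ge0 $ and the last uses $ \Ex\exp|Z|\le2 $. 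Therefore $ \Ex\exp(\la Z)\le1+\la^2 $, and the conclusion follows from $ \log(1+x)\le x $.

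I do not expect a genuine obstacle; the two points deserving a line of care are the justification of the term-by-term integration (covered by the uniform bound $ \exp|Z| $, via Tonelli for the nonnegative series and dominated convergence for the signed one) and the remark that discarding the nonnegative term $ \Ex|Z| $ only weakens the inequality, which is exactly what allows the bound to close with the constant $ 2 $ rather than something larger.
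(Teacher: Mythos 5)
Your proof is correct, and it takes a genuinely different (and shorter) route than the paper's. The paper's proof establishes the pointwise inequality $\E^{\la z}-1-\la z\le\la^2(\E^{|z|}-1)$ for $|\la|\le1$ by splitting into the cases $z\ge0$ and $z\le0$: for $z\ge0$ it shows the ratio $(\E^{\la z}-1-\la z)/\la^2$ is increasing in $\la$ on $(0,1]$; for $z\le0$ it uses a third-order Taylor remainder whose sign can be controlled; and it then needs two further small estimates to cap each case bound by $\E^{|z|}-1$. You bypass this case analysis entirely by expanding $\exp(\la Z)$ in a power series and invoking the single elementary observation that $|\la|^k\le\la^2$ for all $k\ge2$ once $|\la|\le1$; this yields $\Ex\exp(\la Z)-1\le\la^2(\Ex\exp|Z|-1-\Ex|Z|)$, which is in fact slightly sharper than the paper's $\la^2(\Ex\exp|Z|-1)$, and you finish with $\log(1+x)\le x$ exactly as the paper implicitly does. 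The only cost is the routine justification of term-by-term integration, which you handle correctly (Tonelli for the nonnegative moment series, dominated convergence with dominant $\exp|Z|$ for the signed one). Both proofs use $\Ex Z=0$, $\Ex\exp|Z|\le2$, and $|\la|\le1$ at the same junctures; the substantive difference is that yours replaces the paper's calculus-by-cases with one uniform series estimate, which is arguably more transparent.
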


\begin{proof}
It is sufficient to prove that $ \Ex \( \E^{\la Z} - 1 - \la Z \) \le
\la^2 \( \Ex \E^{|Z|} - 1 \) $; to this end we'll prove that $
\E^{\la z} - 1 - \la z \le \la^2 \( \E^{|z|} - 1 \) $ for all  $ z \in
\R $ and $ \la \in [-1,1] $. WLOG, $ \la \in [0,1] $ (otherwise, use
$(-\la)$ and $(-z)$).

For $ z \ge 0 $ the function $ \la \mapsto \( \E^{\la z} - 1 - \la z
\) / \la^2 = \frac{z^2}{2!} + \frac{z^3}{3!} \la + \dots $ is
increasing on $ (0,1] $, thus, $ \( \E^{\la z} - 1 - \la z \) / \la^2
  \le \E^z - 1 - z $.

For $ z \le 0 $ we have $ \( \E^{\la z} - 1 - \la z \) / \la^2 \le
z^2/2 $, since $ \E^{\la z} - 1 - \la z - \frac12 (\la z)^2 = \frac16
\E^{\theta \la z} (\la z)^3 \le 0 $ for some $ \theta \in [0,1] $.

Finally, for $ z \ge 0 $ we have $ \E^z - 1 - z \le \E^{|z|} - 1 $,
and for  $ z \le 0 $ we have $ z^2/2 \le \E z^2/2 \le \E^{|z|} - 1 $,
since $ \int_0^{|z|} \E t \, \D t \le \int_0^{|z|} \E^t \, \D t $;
indeed, $ \E^t - \E t = \E \( \E^{t-1} - 1 - (t-1) \) \ge 0 $.
\end{proof}

\begin{proposition}\label{2a9}
For all $ r \in (0,\infty) $ and $ p \in (1,\infty) $
\begin{align*}
f_{2r} (\la) &\le \frac2p f_r \Big( \frac{p\la}{\sqrt2} \Big) +
 \frac{p}{p-1} \cdot \frac{\la^2}{2r} \tag{a}
 &&\text{for $ |\la| \le \frac{p-1}p \sqrt{2r} $;} \\
f_{2r} (\la) &\ge 2p f_r \Big( \frac{\la}{p\sqrt2} \Big) -
 \frac{1}{p-1} \cdot \frac{\la^2}{2r} \tag{b}
 &&\text{for $ |\la| \le (p-1) \sqrt{2r} $}.
\end{align*}
\end{proposition}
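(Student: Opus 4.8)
The plan is to feed the decomposition of Lemma~\ref{1aa1} into Hölder's inequality and then absorb the ``error term'' $Z$ via Lemma~\ref{1aa2}. Fix $r$ and take $U,V,W,Z$ as in Lemma~\ref{1aa1}. Since $X$ is centered, all of $U,V,W$ are integrable with $\Ex U=\Ex V=\Ex S_r=0$ and $\Ex W=\Ex S_{2r}=0$; hence $\Ex Z=0$, and Lemma~\ref{1aa2} applies to $Z$, giving $\log\Ex\exp\mu Z\le\mu^2$ for $|\mu|\le1$. I would record two further facts: rewriting $\sqrt{2r}\,W=\sqrt r\,U+\sqrt r\,V+Z$ gives both $W=\frac1{\sqrt2}(U+V)+\frac1{\sqrt{2r}}Z$ and $U+V=\sqrt2\,W-\frac1{\sqrt r}Z$; and, by independence of $U,V$ together with the identical-distribution clauses, $\Ex\exp\bigl(\mu(U+V)\bigr)=\E^{2f_r(\mu)}$ while $\Ex\exp(\mu W)=\E^{f_{2r}(\mu)}$.

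For (a) I would apply Hölder with exponents $p$ and $p'=\frac p{p-1}$ to split the pair $U+V$ (which is \emph{not} independent of $Z$) from $Z$:
\[
\E^{f_{2r}(\la)}
=\Ex\exp\Bigl(\tfrac{\la}{\sqrt2}(U+V)+\tfrac{\la}{\sqrt{2r}}Z\Bigr)
\le\Bigl(\Ex\exp\tfrac{p\la}{\sqrt2}(U+V)\Bigr)^{1/p}\Bigl(\Ex\exp\tfrac{p'\la}{\sqrt{2r}}Z\Bigr)^{1/p'}.
\]
The first factor equals $\E^{(2/p)f_r(p\la/\sqrt2)}$ by the remarks above. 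For the second, as soon as $\bigl|\frac{p'\la}{\sqrt{2r}}\bigr|\le1$, i.e.\ $|\la|\le\frac{p-1}p\sqrt{2r}$ — precisely the stated range — Lemma~\ref{1aa2} bounds its logarithm by $\frac1{p'}\bigl(\frac{p'\la}{\sqrt{2r}}\bigr)^2=\frac p{p-1}\cdot\frac{\la^2}{2r}$. Taking logarithms yields (a).

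For (b) I would run the same idea from the $f_r$ side. Using $U+V=\sqrt2\,W-\frac1{\sqrt r}Z$ and independence of $U,V$,
\[
\E^{2f_r(\nu)}=\Ex\exp\bigl(\nu(U+V)\bigr)=\Ex\exp\Bigl(\sqrt2\,\nu W-\tfrac{\nu}{\sqrt r}Z\Bigr)
\le\Bigl(\Ex\exp(\sqrt2\,p\nu W)\Bigr)^{1/p}\Bigl(\Ex\exp\bigl(-\tfrac{p'\nu}{\sqrt r}Z\bigr)\Bigr)^{1/p'},
\]
again with $p'=\frac p{p-1}$. The first factor is $\E^{(1/p)f_{2r}(\sqrt2\,p\nu)}$; by Lemma~\ref{1aa2} the second has logarithm at most $\frac1{p'}\bigl(\frac{p'\nu}{\sqrt r}\bigr)^2=\frac p{p-1}\cdot\frac{\nu^2}{r}$ whenever $\frac{p'|\nu|}{\sqrt r}\le1$. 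Taking logarithms, substituting $\la=\sqrt2\,p\nu$ (so that the side condition becomes $|\la|\le(p-1)\sqrt{2r}$), multiplying through by $p$ and using $\frac{p'}p=\frac1{p-1}$, one rearranges to (b).

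The one genuine choice in the argument is the Hölder pairing: keep $U+V$ together so that their independence is still usable after the split, and pair the $Z$-contribution with the conjugate exponent, so that after accounting for the Hölder weights Lemma~\ref{1aa2} delivers exactly the coefficient $\frac p{p-1}$; matching its threshold $|\mu|\le1$ against the admissible $\la$ then forces precisely the ranges in the statement. The only caveat is finiteness: whenever the relevant value of $f_r$ is $+\infty$ the Hölder bound makes the corresponding side $+\infty$ as well and the inequality holds trivially, while otherwise every step above is literal. I expect the exponent bookkeeping in (b) — the $\la\leftrightarrow\nu$ substitution together with the extra factor $p$ — to be the only place where it is easy to slip on a power.
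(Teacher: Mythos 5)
Your proof is correct and follows essentially the same route as the paper: Hölder's inequality applied to the decomposition $W=\tfrac1{\sqrt2}(U+V)+\tfrac1{\sqrt{2r}}Z$ from Lemma~\ref{1aa1}, with the $Z$-factor absorbed via Lemma~\ref{1aa2}, splitting off $U+V$ so that independence can still be used. The only difference is cosmetic: for (b) you introduce an auxiliary variable $\nu$ and substitute $\la=\sqrt2\,p\nu$ at the end, whereas the paper runs the same Hölder estimate directly in terms of $\la$.
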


\begin{proof}
Lemma \ref{1aa1} gives $ U,V,W,Z $. By H\"older's inequality,
\begin{multline*}
\Ex \Big( \exp \frac{\la(U+V)}{\sqrt2} \cdot \exp \frac{\la
  Z}{\sqrt{2r}} \Big) \le \\
\le \Big( \Ex \exp \frac{p\la(U+V)}{\sqrt2} \Big)^{1/p} \Big( \Ex \exp
 \frac{p}{p-1} \frac{\la Z}{\sqrt{2r}} \Big)^{(p-1)/p} \, .
\end{multline*}
We note that
\begin{multline*}
\Ex \exp \frac{p\la(U+V)}{\sqrt2} = \Big( \Ex \exp \frac{p\la
  U}{\sqrt2} \Big) \Big( \Ex \exp \frac{p\la V}{\sqrt2} \Big) = \\
= \Big( \Ex \exp \frac{p\la S_r}{\sqrt2} \Big)^2 = \exp 2f_r \Big(
 \frac{p\la}{\sqrt2} \Big) \, ,
\end{multline*}
\[
\log \Ex \exp \frac{p}{p-1} \frac{\la Z}{\sqrt{2r}} \le \Big(
 \frac{p}{p-1} \Big)^2 \frac{\la^2}{2r} \quad
 \text{for } |\la| \le \frac{p-1}p \sqrt{2r}
\]
(by Lemma \ref{1aa2}), and get (a):
\begin{multline*}
f_{2r}(\la) = \log \Ex \exp \la S_{2r} = \log \Ex \exp \la W = \\
= \log
 \Ex \exp \Big( \frac{\la(U+V)}{\sqrt2} + \frac{\la Z}{\sqrt{2r}} \Big)
\le \frac1p \cdot 2 f_r \Big( \frac{p\la}{\sqrt2} \Big) + \frac{p-1}p
 \Big( \frac{p}{p-1} \Big)^2 \frac{\la^2}{2r} \, .
\end{multline*}
For (b) the argument is similar:
\[
\Ex \bigg( \! \exp \frac{\la W}{p} \cdot \exp \frac{-\la
  Z}{p\sqrt{2r}} \bigg)
\le \Big( \Ex \exp \frac{p\la W}{p}
 \Big)^{1/p} \Big( \Ex \exp \frac{-p\la Z}{(p-1)p\sqrt{2r}}
  \Big)^{(p-1)/p} \, ;
\]
\[
\underbrace{ \log \Ex \exp \frac{\la(U+V)}{p\sqrt2}
 }_{2f_r(\frac{\la}{p\sqrt2})}
\le \frac1p \underbrace{ \log \Ex \exp \la W }_{f_{2r}(\la)} +
\frac{p-1}p \underbrace{ \log \Ex \exp \frac{-\la Z}{(p-1)\sqrt{2r}
  }}_{\le \frac{\la^2}{(p-1)^2 \cdot 2r}} \, . 
\]
\end{proof}

\begin{remark}\label{2a10}
More generally, for all $ r,s \in (0,\infty) $ and $ p \in (1,\infty)
$,
\begin{align*}
f_{r+s} (\la) &\le
\frac1p f_r \bigg( p\la \sqrt{\frac{r}{r+s}} \, \bigg) +
\frac1p f_s \bigg( p\la \sqrt{\frac{s}{r+s}} \, \bigg) +
 \frac{p}{p-1} \cdot \frac{\la^2}{r+s} \tag{a} \\
 &\qquad\qquad\qquad\qquad\text{for $ |\la| \le \frac{p-1}p \sqrt{r+s} $;} \\
f_{r+s} (\la) &\ge
p f_r \bigg( \frac{\la}{p} \sqrt{\frac{r}{r+s}} \, \bigg) +
p f_s \bigg( \frac{\la}{p} \sqrt{\frac{s}{r+s}} \, \bigg) -
\frac{1}{p-1} \cdot \frac{\la^2}{r+s} \tag{b} \\
 &\qquad\qquad\qquad\qquad\text{for $ |\la| \le (p-1) \sqrt{r+s} $}.
\end{align*}
To this end, take $ U = \frac1{\sqrt r} \int_{-r}^0 X_t^- \, \D t $, $
V = \frac1{\sqrt s} \int_{0}^s X_t^+ \, \D t $, $ W =
\frac1{\sqrt{r+s}} \int_{-r}^s X_t^0 \, \D t $ in the proof of
\ref{1aa1}.
\end{remark}

\subsection{Upper bounds}

In this subsection we investigate an arbitrary family of functions $ f_r
: \R \to [0,\infty] $ for $ r \in (0,\infty) $ such that
\begin{equation}\label{start}
f_{2r} (\la) \le \frac2p f_r \Big( \frac{p\la}{\sqrt2} \Big) +
\frac{p}{p-1} \cdot \frac{\la^2}{2r}
\end{equation}
whenever $ 0<r<\infty $, $ 1<p<\infty $ and $ \frac{|\la|}{\sqrt{2r}}
\le \frac{p-1}p $. (The functions \eqref{2a4} satisfy \eqref{start} by
Prop.~\ref{2a9}(a).)

If a family $(f_r)_r$ satisfies \eqref{start}, then for arbitrary $
s \in (0,\infty) $ the rescaled family $ (g_r)_r $ defined by
\begin{equation}\label{1.2}
g_r (\la) = f_{s^2 r} (s\la)
\end{equation}
satisfies \eqref{start} (which is evidently related to Remark \ref{2a5}).

\begin{lemma}\label{1a1}
Let $ a \ge 1 $, $ \eps \ge 0 $, $ r>0 $, and $ \frac{\eps}{\sqrt r} \le \sqrt2 -
1 $. If
\[
f_r(\eps\la) \le (a-1) \la^2 \quad \text{for } |\la| \le 1 \, ,
\]
then
\[
f_{2r} (\eps\la) \le \bigg( a \Big( 1 + \frac{\eps}{\sqrt r} \Big)
-1 \bigg) \la^2 \quad \text{for } |\la| \le 1 \, .
\]
\end{lemma}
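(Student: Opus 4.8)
The plan is to apply the hypothesized inequality \eqref{start} exactly once, at a well-chosen exponent $p$, and then substitute the assumed bound on $f_r$. Put $u=\eps/\sqrt r\in[0,\sqrt2-1]$. If $\eps=0$ the hypothesis forces $f_r(0)=0$, whence \eqref{start} forces $f_{2r}(0)=0$ and the conclusion is trivial; so assume $\eps>0$ and set
\[
p=\frac{\sqrt2}{\sqrt2-u}\in(1,\sqrt2].
\]
This is the smallest $p$ for which \eqref{start} is admissible at every argument $\mu=\eps\la$ with $|\la|\le1$: indeed $|\mu|/\sqrt{2r}\le u/\sqrt2=(p-1)/p$. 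Applying \eqref{start} at $\mu$ then gives
\[
f_{2r}(\eps\la)\le\frac2p\,f_r\Big(\eps\cdot\frac{p\la}{\sqrt2}\Big)+\frac{p}{p-1}\cdot\frac{\eps^2\la^2}{2r}.
\]

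Since $p\le\sqrt2$, the number $\frac{p\la}{\sqrt2}$ lies in $[-1,1]$, so the hypothesis applies to it and yields $f_r\bigl(\eps\cdot\frac{p\la}{\sqrt2}\bigr)\le(a-1)\frac{p^2\la^2}{2}$. Substituting this together with $\eps^2/r=u^2$, the estimate collapses to
\[
f_{2r}(\eps\la)\le\Big((a-1)p+\frac{p}{p-1}\cdot\frac{u^2}{2}\Big)\la^2,
\]
and it remains to check that the coefficient is at most $a(1+u)-1$. With the chosen $p$ one has $p/(p-1)=\sqrt2/u$, so the coefficient equals $(a-1)\sqrt2/(\sqrt2-u)+u/\sqrt2$; clearing the denominator $\sqrt2-u$ and cancelling the factor $u>0$, the required inequality becomes $u\bigl(a-1/\sqrt2\bigr)\le(\sqrt2-1)a$. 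Since $a\ge1$ the factor $a-1/\sqrt2$ is positive and $u\le\sqrt2-1$, so it suffices that $(\sqrt2-1)\bigl(a-1/\sqrt2\bigr)\le(\sqrt2-1)a$, i.e.\ $-(\sqrt2-1)/\sqrt2\le0$, which is clear. This is the single place where the assumption $\eps/\sqrt r\le\sqrt2-1$ is used.

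I expect no genuine obstacle beyond identifying the right $p$. The instinctive choices — $p=\sqrt2$ held fixed, or the $p$ that minimizes the coefficient above (it depends on $a$ and degenerates as $a\to1$) — either fail to give a bound uniform in $a$ or produce a messier one; the value $p=\sqrt2/(\sqrt2-\eps/\sqrt r)$ is precisely the one forced by saturating the admissibility range of \eqref{start}, and with it the algebra closes with a small explicit margin. In the larger argument this lemma is evidently the inductive step: iterating it from $r$ through $2r,4r,\dots$ multiplies the factors $1+\eps/\sqrt{2^kr}$, whose product converges.
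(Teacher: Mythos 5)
Your proof is correct and follows the paper's method exactly: apply \eqref{start} once at a suitably chosen $p$, then plug in the hypothesis on $f_r$. The only difference is the choice of $p$: you take $p=\sqrt2/(\sqrt2-\eps/\sqrt r)$ (saturating the admissibility constraint), while the paper takes $p=1+\eps/\sqrt r$, which makes the resulting coefficient collapse to $a(1+\eps/\sqrt r)-1$ exactly and avoids the final clearing-of-denominators step.
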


\begin{remark}\label{invar}
If this lemma holds for $\eps$ and $r$, then for arbitrary $ s \in
(0,\infty) $ it holds also for $ s\eps $ and $ s^2 r $ due to the
rescaling \eqref{1.2}. All relevant functions of $ \eps, r $ depend
only on the invariant combination $ \eps/\sqrt r $. (Also $a$ and
$\la$ are invariant.) Therefore it is sufficient to prove Lemma
\ref{1a1} for $r=1$ only. (This argument will be used many times.)
\end{remark}

\begin{proof}[Proof of Lemma \ref{1a1}]
We restrict ourselves to the case $r=1$ according to Remark
\ref{invar}. Assuming $ \eps \ne 0 $ we take $ p = 1 + \eps $, note
that $ p \le \sqrt2 $, $ \frac{p-1}p \ge \frac{\eps}{\sqrt2} $, and
apply \eqref{start} to $ \eps\la $ in place of $\la$, getting two
summands. The second summand is $ \frac{p}{p-1} \frac{\eps^2 \la^2}{2}
\le \frac{ \eps \la^2 }{ \sqrt2 } \le \eps \la^2 $. The first summand
does not exceed $ \frac2{1+\eps} (a-1) \frac12 (1+\eps)^2 \la^2 \le
(1+\eps) (a-1) \la^2 $.
\end{proof}

Iterating the transition $ r \mapsto 2r $ we multiply $a$ by $ \( 1
+ \frac{\eps}{\sqrt r} \) \( 1 + \frac{\eps}{\sqrt{2r}} \) \( 1
+ \frac{\eps}{\sqrt{4r}} \) \dots \le \exp \( \frac{\sqrt2}{\sqrt2-1}
\frac{\eps}{\sqrt r} \) $ and get the following.

\begin{proposition}\label{1a2}
Let $ a \ge 1 $, $ \eps \ge 0 $, $ r>0 $, and $ \frac{\eps}{\sqrt
r} \le \sqrt2 - 1 $. If
\[
f_r(\eps\la) \le (a-1) \la^2 \quad \text{for } |\la| \le 1 \, ,
\]
then, for every $ n=0,1,2,\dots $,
\[
f_{2^n r} (\eps\la) \le \bigg( a \exp \Big( \frac{\sqrt2}{\sqrt2-1}
\frac{\eps}{\sqrt r} \Big) -1 \bigg) \la^2 \quad \text{for } |\la| \le
1 \, .
\]
\end{proposition}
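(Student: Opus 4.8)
The plan is a straightforward induction on $n$, iterating Lemma \ref{1a1}. For $n=0$ there is nothing to prove, since $a\le a\exp\big(\frac{\sqrt2}{\sqrt2-1}\frac{\eps}{\sqrt r}\big)$. To organize the iteration I would introduce the partial products
\[
a_k = a\prod_{j=0}^{k-1}\Big(1+\frac{\eps}{\sqrt{2^j r}}\Big)\qquad(k=0,1,2,\dots),
\]
so that $a_0=a$ and $a_{k+1}=a_k\big(1+\eps/\sqrt{2^k r}\big)$, and prove by induction on $k$ the sharper statement
\[
f_{2^k r}(\eps\la)\le(a_k-1)\la^2\qquad\text{for }|\la|\le1 .
\]
The case $k=0$ is the hypothesis, and Proposition \ref{1a2} will follow once $a_n$ is bounded uniformly in $n$.

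The step from $k$ to $k+1$ is exactly Lemma \ref{1a1} applied with $r$ replaced by $2^k r$ and $a$ replaced by $a_k$. Two hypotheses of that lemma must be verified at level $2^k r$: that $a_k\ge1$, which is immediate because every factor $1+\eps/\sqrt{2^j r}$ is $\ge1$ and $a_0=a\ge1$; and the smallness condition $\eps/\sqrt{2^k r}\le\sqrt2-1$, which is not merely preserved but improves along the iteration, since $\eps/\sqrt{2^k r}\le\eps/\sqrt r\le\sqrt2-1$. The lemma then yields $f_{2^{k+1}r}(\eps\la)\le\big(a_k(1+\eps/\sqrt{2^k r})-1\big)\la^2=(a_{k+1}-1)\la^2$ for $|\la|\le1$, completing the induction.

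It remains to bound $a_n$ uniformly in $n$. Using $1+x\le\E^x$ factor by factor and summing a geometric series,
\begin{align*}
a_n &\le a\exp\Big(\sum_{j=0}^{n-1}\frac{\eps}{\sqrt{2^j r}}\Big)
\le a\exp\Big(\frac{\eps}{\sqrt r}\sum_{j=0}^{\infty}2^{-j/2}\Big) \\
&= a\exp\Big(\frac{\eps}{\sqrt r}\cdot\frac{1}{1-2^{-1/2}}\Big)
= a\exp\Big(\frac{\sqrt2}{\sqrt2-1}\cdot\frac{\eps}{\sqrt r}\Big),
\end{align*}
whence $f_{2^n r}(\eps\la)\le(a_n-1)\la^2\le\big(a\exp(\frac{\sqrt2}{\sqrt2-1}\frac{\eps}{\sqrt r})-1\big)\la^2$ for $|\la|\le1$, as claimed.

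I expect no genuine obstacle here: the argument is a clean induction, and the only points needing attention are that every invocation of Lemma \ref{1a1} is legitimate — i.e.\ $a_k\ge1$ and $\eps/\sqrt{2^k r}\le\sqrt2-1$ at each level, both checked above — and that the geometric sum $\sum_{j\ge0}2^{-j/2}=\frac{1}{1-2^{-1/2}}=\frac{\sqrt2}{\sqrt2-1}$ produces exactly the stated constant. (If one wishes to treat $\eps=0$ separately it is trivial: the hypothesis at $\la\to0$ forces $f_r(0)=0$, and then \eqref{start} at $\la=0$ propagates $f_{2^n r}(0)=0$ for all $n$.)
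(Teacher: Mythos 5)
Your proposal is correct and takes essentially the same approach as the paper: the paper's proof is a one-line remark that iterating Lemma \ref{1a1} multiplies $a$ by the partial products $\prod_j\bigl(1+\eps/\sqrt{2^j r}\bigr)$, which are bounded by the exponential of the geometric series $\sum_{j\ge0}\eps/\sqrt{2^j r}=\frac{\sqrt2}{\sqrt2-1}\frac{\eps}{\sqrt r}$. You have simply spelled out the induction and the verification that the hypotheses of Lemma \ref{1a1} persist (indeed improve) along the iteration, which the paper leaves implicit.
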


\begin{lemma}\label{1a3}
Let $ a,b,c,\de \ge 0 $, $ b\de < 1 $, and $ r>0 $. If
\[
f_r(\la) \le \frac{ a\la^2 }{ 1 - \frac{b|\la|}{\sqrt r} } +
\frac{c|\la|}{\sqrt r} \quad \text{for } |\la| \le \de \sqrt r \, ,
\]
then
\[
f_{2r}(\la) \le \frac{ a\la^2 }{ 1 - \frac{(b+1)|\la|}{\sqrt{2r}} } +
\frac{(2c+1)|\la|}{\sqrt{2r}} \quad \text{for } |\la| \le
\frac{\de}{1+\de} \sqrt{2r} \, .
\]
\end{lemma}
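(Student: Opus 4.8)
The plan is to get everything from the single inequality \eqref{start}, in the spirit of the proofs of Lemmas \ref{1a1} and \ref{1a2}. By the rescaling \eqref{1.2} (cf.\ Remark \ref{invar}) it is enough to treat $r=1$; so assume $f_1(\lambda)\le\frac{a\lambda^2}{1-b|\lambda|}+c|\lambda|$ for $|\lambda|\le\delta$, and aim at $f_2(\lambda)\le\frac{a\lambda^2}{1-(b+1)|\lambda|/\sqrt2}+(2c+1)|\lambda|/\sqrt2$ for $|\lambda|\le\frac{\delta}{1+\delta}\sqrt2$. The case $\lambda=0$ is trivial, so fix $\lambda\ne0$ and write $t=|\lambda|/\sqrt2\in(0,\frac{\delta}{1+\delta}]$.

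First I would apply \eqref{start} at $r=1$, keeping the parameter $p\in(1,\infty)$ free:
\[
f_2(\lambda)\le\frac2p\,f_1\!\Big(\tfrac{p\lambda}{\sqrt2}\Big)+\frac{p}{p-1}\cdot\frac{\lambda^2}{2},
\]
valid once $t\le\frac{p-1}p$. Then I would insert the hypothesis at the point $\frac{p\lambda}{\sqrt2}$ (valid once $pt=\bigl|\tfrac{p\lambda}{\sqrt2}\bigr|\le\delta$); since $\bigl|\tfrac{p\lambda}{\sqrt2}\bigr|=pt$ and $\lambda^2=2t^2$, this gives
\[
f_2(\lambda)\le\frac{ap\lambda^2}{1-bpt}+2ct+\frac{p}{p-1}\,t^2 .
\]

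The only step that is not pure bookkeeping is the choice of $p$. Matching the last display against the target $\frac{a\lambda^2}{1-(b+1)t}+2ct+t$ term by term, the $2ct$ agrees and one needs (i) $\frac{p}{1-bpt}\le\frac1{1-(b+1)t}$, which after clearing (positive) denominators is $p(1-t)\le1$, and (ii) $\frac{p}{p-1}\,t^2\le t$, which is $p(1-t)\ge1$. Hence $p$ is forced: $p=\frac1{1-t}=\bigl(1-\tfrac{|\lambda|}{\sqrt2}\bigr)^{-1}$, and for this $p$ both (i) and (ii) hold with equality, so the two displays chain to exactly the asserted bound. What remains is to verify the side conditions for this $p$: $p>1$ (since $t>0$); the \eqref{start}-domain bound $t\le\frac{p-1}p$, which is an equality because $\frac{p-1}p=t$; the hypothesis-domain bound $pt=\frac t{1-t}\le\delta$, which is precisely $t\le\frac{\delta}{1+\delta}$, i.e.\ the stated range of $\lambda$; and positivity of the two denominators, $1-bpt\ge1-b\delta>0$ and $1-(b+1)t\ge1-(b+1)\tfrac{\delta}{1+\delta}>0$, both forced by $b\delta<1$.

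I do not foresee a real obstacle. The content of the argument is the observation that the prescribed output shape — with $b$ increased to $b+1$ and $c$ to $2c+1$ — pins $p$ down uniquely as $\bigl(1-|\lambda|/\sqrt2\bigr)^{-1}$; everything else is substitution, together with the exact matching of the constant $\tfrac{\delta}{1+\delta}$ in the conclusion with the requirement $pt\le\delta$ on the whole stated $\lambda$-range. This also explains the shift $c\mapsto2c+1$: the $2ct$ comes from $\frac2p\cdot cpt$ in Hölder, and the extra $t$ is exactly what the quadratic remainder $\frac{p}{p-1}\cdot\frac{\lambda^2}2$ contributes after the forced choice of $p$.
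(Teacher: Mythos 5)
Your proof is correct and follows essentially the same route as the paper's: after reducing to $r=1$, you apply \eqref{start}, substitute the hypothesis at $p\lambda/\sqrt2$, and are led to the same choice $p=\bigl(1-|\lambda|/\sqrt2\bigr)^{-1}$, under which both terms match the target exactly and the side conditions (the \eqref{start} domain, $p|\lambda|/\sqrt2\le\delta$, and positivity of the denominators via $b\delta<1$) all check out. The only difference is presentational: you motivate the choice of $p$ by term-matching, whereas the paper states it upfront and verifies the resulting identities.
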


\begin{proof} \let\qed\relax
We restrict ourselves to the case $r=1$ according to Remark
\ref{invar}.\footnote{%
 Invariant are $b$, $c$, $\de$, $ \la^2/r $, $ a\la^2 $.}
Assuming $ \la\ne0 $ we take
\[
p = \frac{1}{ 1 - \frac{|\la|}{\sqrt{2}} } \, ,
\]
note that
\begin{compactitem}
\item  $ 1 - \frac{bp|\la|}{\sqrt{2}} = p \( 1 -
\frac{(b+1)|\la|}{\sqrt{2}} \) $ (since $ 1 = p - p
\frac{|\la|}{\sqrt{2}} $);
\item $ \big| \frac{p\la}{\sqrt2} \big| = \frac{
  \frac{|\la|}{\sqrt{2}} }{ 1 - \frac{|\la|}{\sqrt{2}} } \le \de $;
\item $ \frac{p-1}p = \frac{|\la|}{\sqrt{2}} $;
\end{compactitem}
and apply \eqref{start}, getting two summands. The second summand is
$ \frac{p}{p-1} \frac{\la^2}{2} = \frac{|\la|}{\sqrt{2}} $. The
first summand is
\begin{multline*}
\frac2p f_1 \Big( \frac{p\la}{\sqrt2} \Big) \le \frac2p \bigg( \frac{
 a\(\frac{p\la}{\sqrt2}\)^2 }{ 1 - b \big|\frac{p\la}{\sqrt2}\big|
 } + c \Big|\frac{p\la}{\sqrt2}\Big| \bigg) = \\
= \frac{ap\la^2}{1 - \frac{bp|\la|}{\sqrt{2}}} +
 \frac{2c|\la|}{\sqrt{2}} = \frac{ a\la^2 }{ 1 -
 \frac{(b+1)|\la|}{\sqrt{2}} } + \frac{2c|\la|}{\sqrt{2}} \,
 . \qquad \rlap{$\qedsymbol$}
\end{multline*}
\end{proof}

\begin{proposition}\label{1a4}
Let $ a,\de \ge 0 $, and $ r>0 $. If
\[
f_r(\la) \le a\la^2 \quad \text{for } |\la| \le \de \sqrt r \, ,
\]
then (for every $ n=0,1,2,\dots $)
\[
f_{2^n r}(\la) \le \frac{ a\la^2 }{ 1 - \frac{n|\la|}{2^{n/2}\sqrt r} } +
\frac{2^{n/2}|\la|}{\sqrt r} \quad \text{for }
|\la| \le \frac{\de}{1+n\de} 2^{n/2} \sqrt r \, .
\]
\end{proposition}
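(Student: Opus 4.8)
The plan is to obtain Proposition~\ref{1a4} by iterating the one-step doubling Lemma~\ref{1a3}, in the same way that Proposition~\ref{1a2} is obtained by iterating Lemma~\ref{1a1}. The iteration is run by induction on $ n $, but it is cleaner to prove a slightly sharper statement, namely
\[
f_{2^n r}(\la) \le \frac{ a\la^2 }{ 1 - \frac{n|\la|}{\sqrt{2^n r}} } +
\frac{ (2^n - 1)|\la| }{ \sqrt{2^n r} } \qquad \text{for } |\la| \le
\frac{\de}{1 + n\de}\, \sqrt{2^n r} \, ,
\]
which I denote $ (\star_n) $. Since $ \sqrt{2^n r} = 2^{n/2}\sqrt r $ and $ 2^n - 1 \le 2^n $, the bound $ (\star_n) $ implies the bound claimed in the proposition: the admissible range is the same, the $ \la^2 $-term is the same, and $ \frac{(2^n-1)|\la|}{\sqrt{2^n r}} \le \frac{2^{n/2}|\la|}{\sqrt r} $. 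So it is enough to prove $ (\star_n) $ for all $ n = 0,1,2,\dots $.

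The base case $ (\star_0) $ is precisely the hypothesis $ f_r(\la) \le a\la^2 $ for $ |\la| \le \de\sqrt r $, because $ 2^0 - 1 = 0 $. For the inductive step I would read $ (\star_n) $ as the hypothesis of Lemma~\ref{1a3} with $ r $ replaced by $ 2^n r $ and with the parameters $ b = n $, $ c = 2^n - 1 $, $ \de $ replaced by $ \de_n := \frac{\de}{1 + n\de} $; the side condition $ b\de_n < 1 $ of that lemma holds because $ \frac{n\de}{1+n\de} < 1 $. The conclusion of Lemma~\ref{1a3} is then exactly $ (\star_{n+1}) $: the scale doubles to $ 2^{n+1}r $, the coefficient $ b $ becomes $ n+1 $, the coefficient $ c $ becomes $ 2(2^n-1)+1 = 2^{n+1}-1 $, and the range parameter becomes $ \frac{\de_n}{1+\de_n} = \frac{\de}{1+(n+1)\de} = \de_{n+1} $. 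Hence $ (\star_n) $ holds for all $ n $ by induction, which gives the proposition. (The degenerate case $ \de = 0 $, where only $ \la = 0 $ is admitted, is trivial: $ f_{2^n r}(0) = 0 $.)

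I do not expect a genuine obstacle — this is essentially bookkeeping — but there is one point that must not be skipped, and it is the reason for passing to $ (\star_n) $. The second-term coefficient carried through the induction has to be $ 2^n - 1 $, not $ 2^{n/2} $. If one tried to iterate the bound in the form stated in the proposition (coefficient $ 2^{n/2} $, i.e.\ $ c = 2^n $ at scale $ 2^n r $), then Lemma~\ref{1a3} would return $ 2c+1 = 2^{n+1}+1 $, which is larger than $ 2^{n+1} $, and the induction would fail to close. The value $ 2^n - 1 $ is exactly the solution of the recursion $ c \mapsto 2c+1 $ coming from Lemma~\ref{1a3} with $ c_0 = 0 $, so it reproduces itself step by step; the harmless estimate $ 2^n - 1 \le 2^n $ is applied only at the very end, to pass from $ (\star_n) $ to the stated form.
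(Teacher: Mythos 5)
Your proof is correct and matches the paper's own proof essentially verbatim: both prove by induction the sharper bound with second summand $(1-2^{-n})\,2^{n/2}|\la|/\sqrt r = (2^n-1)|\la|/\sqrt{2^n r}$, applying Lemma~\ref{1a3} at each step with $b=n$, $c=2^n-1$, and the shrinking range parameter $\de/(1+n\de)$.
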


\begin{proof} \let\qed\relax
We prove a bit stronger inequality, with the second summand $
(1-2^{-n}) \frac{2^{n/2}|\la|}{\sqrt r} $ instead of
$ \frac{2^{n/2}|\la|}{\sqrt r} $, by induction in $n$. Case $n=0$ is
trivial. If the claim holds for $n$, then Lemma \ref{1a3} applies to $
2^n r $, $b=n$, $c=(1-2^{-n}) 2^n = 2^n-1 $, and $ \frac{\de}{1+n\de}
$, giving
\[
f_{2^{n+1} r}(\la) \le
\frac{ a\la^2 }{ 1 - \frac{(n+1)|\la|}{\sqrt{2^{n+1} r} } } +
\frac{(2^{n+1}-1)|\la|}{\sqrt{2^{n+1} r} } \quad \text{for }
 |\la| \le \frac{\de}{1+(n+1)\de} \sqrt{2^{n+1} r} \, . \qquad
 \rlap{$\qedsymbol$}
\]
\end{proof}

\begin{theorem}\label{1a5}
Let $ \eps \in (0,\sqrt2-1] $ and $ r,a \in (0,\infty) $. If
\[
f_r(\eps\la) \le a \la^2 \quad \text{for } |\la| \le \sqrt r \, ,
\]
then, for every $ n = 1,2,\dots $,
\[
f_{2^n r} (\eps\la) \le a \la^2 + C \eps \Big( a
+ \frac1r \Big) \frac{1+V}{1-\eps V} \la^2 \qquad \text{for } \;
|\la| \le \frac{ 2^{n/2} \sqrt r }{ \eps n + \max(\eps\sqrt{2n},1)
} \, ,
\]
where
\[
C = \frac1\eps \bigg( \exp \Big( \frac{ \sqrt2 }{ \sqrt2 - 1
} \eps \Big) - 1 \bigg) \, , \quad V = \frac{ n }{ 2^{n/2} } \frac{
|\la| }{ \sqrt r } \, .
\]
\end{theorem}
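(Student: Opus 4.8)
The plan is to reduce to $r=1$ and then to combine Proposition~\ref{1a2} (which holds the quadratic coefficient close to $a$ but does not enlarge the $\la$-window) with Proposition~\ref{1a4} (which enlarges the window at the price of a universal linear term). By the rescaling \eqref{1.2}, the hypothesis, the conclusion, $V$, and the admissible range of $\la$ all depend only on the invariant combination $\eps/\sqrt r$ (with $a$ and $C$ already invariant), exactly as in Remark~\ref{invar}; so it suffices to treat $r=1$, where $a+1/r=a+1$. Write $\kappa=\frac{\sqrt2}{\sqrt2-1}$, so that $C=\frac1\eps(\E^{\kappa\eps}-1)$, $\eps C=\E^{\kappa\eps}-1$, and $A:=(a+1)\E^{\kappa\eps}-1=a+\eps C(a+1)$; note $C\ge\kappa$ by convexity of $\exp$. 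For $|\la|\le1$ --- which lies in the admissible range since $\eps\le\sqrt2-1$ --- Proposition~\ref{1a2} with $r=1$ and with ``$a$'' replaced by $a+1$ gives at once $f_{2^n}(\eps\la)\le A\la^2=a\la^2+\eps C(a+1)\la^2$, which is below the asserted bound because $\frac{1+V}{1-\eps V}\ge1$ (and $\eps V<1$ throughout the admissible range, so this is meaningful). The work is therefore confined to $1<|\la|\le\frac{2^{n/2}}{\eps n+\max(\eps\sqrt{2n},1)}$; there set $V=\frac{n|\la|}{2^{n/2}}$, so $n2^{-n/2}<V\le V_{\max}:=\frac{n}{\eps n+\max(\eps\sqrt{2n},1)}$.

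In this regime I split $n=m+k$. First, Proposition~\ref{1a2} (again with ``$a$''$=a+1$) takes scale $1$ to scale $2^m$: $f_{2^m}(\eps\la)\le A\la^2$ for $|\la|\le1$, i.e.\ $f_{2^m}(\nu)\le\frac{A}{\eps^2}\nu^2$ for $|\nu|\le\eps$. Feeding this into Proposition~\ref{1a4} at scale $2^m$ with ``$a$''$=A/\eps^2$, ``$\de$''$=\eps\,2^{-m/2}$, ``$n$''$=k$, using $2^{k/2}2^{m/2}=2^{n/2}$ to tidy both the correction factor and the range, and finally substituting $\eps\la$ for $\la$, one gets
\[
f_{2^n}(\eps\la)\ \le\ \frac{A\la^2}{1-\eps W}\ +\ \frac{\eps\,|\la|\,2^{k}}{2^{n/2}}\,,\qquad W:=\frac{k|\la|}{2^{n/2}}\le V,
\]
valid for $|\la|\le\frac{2^{n/2}}{2^{m/2}+k\eps}$. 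I then choose $m$ as small as this range condition allows (making $k$, hence the linear correction, as small as possible): concretely, $m=$ the least integer with $2^m\ge n/(\kappa V^2)$, which forces $m=0$ whenever $V$ is not small. One has $0\le m\le n$ automatically here, since $1<|\la|$ gives $V>n2^{-n/2}$ and hence $n/(\kappa V^2)<2^n$.

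Once $m$ is chosen, combining the two error terms is elementary. For the multiplicative one, for every $W\le V$ with $\eps V<1$,
\[
\frac{A}{1-\eps W}+\frac{\eps\,(a(C-1)+C)\,V}{1-\eps W}=\frac{a(1-\eps V)+\eps C(a+1)(1+V)}{1-\eps W}\ \le\ a+\frac{C\eps(a+1)(1+V)}{1-\eps V}\,,
\]
the last step being $\frac1{1-\eps W}\le\frac1{1-\eps V}$ applied to the common nonnegative numerator; since $a(C-1)+C\ge C\ge\kappa$ and $1-\eps W\le1$, this gives $\frac{A}{1-\eps W}\le a+\frac{C\eps(a+1)(1+V)}{1-\eps V}-\eps\kappa V$. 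For the additive one, substituting $|\la|=V2^{n/2}/n$ and $k=n-m$, the inequality $\frac{\eps|\la|2^k}{2^{n/2}}\le\eps\kappa V\la^2$ is equivalent to $2^m\ge n/(\kappa V^2)$, which the choice of $m$ delivers. Adding the two, the term $-\eps\kappa V\la^2$ cancels the linear correction and we arrive at $f_{2^n}(\eps\la)\le a\la^2+C\eps(a+1)\frac{1+V}{1-\eps V}\la^2$, the theorem.

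The step I expect to be the genuine obstacle is confirming that the chosen $m$ really satisfies the range condition $2^{m/2}+(n-m)\eps\le n/V$, for \emph{every} $n\ge1$ and every admissible $\la$. When $m=0$ it reads $V\le n/(1+n\eps)$, which follows from $V\le V_{\max}$ together with $\max(\eps\sqrt{2n},1)\ge1$. When $m\ge1$, bounding $2^{m/2}$ by $\sqrt{2n/\kappa}/V$ (using $2^m<2n/(\kappa V^2)$ --- the discretisation loss on the powers-of-$2$ grid has to be carried, and is absorbed precisely because $\eps\le\sqrt2-1$) reduces it to $1-\eps V\ge\sqrt{2/(\kappa n)}$; and this is exactly what the denominator $\eps n+\max(\eps\sqrt{2n},1)$ of the admissible range is tuned to guarantee for all admissible $V$ --- the ``$1$'' branch covering small $n$ and the ``$\eps\sqrt{2n}$'' branch large $n$, with a further split on whether $V_{\max}\lessgtr\sqrt{n/\kappa}$ (when $V_{\max}>\sqrt{n/\kappa}$ one instead bounds $V$ by $\sqrt{n/\kappa}$, since $m\ge1$ forces $V<\sqrt{n/\kappa}$). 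These are elementary but genuinely finicky inequalities; everything else is the bookkeeping above.
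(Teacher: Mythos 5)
Your proposal follows the same overall route as the paper: reduce to $r=1$ by the rescaling invariance, cover $|\la|\le1$ directly by Proposition~\ref{1a2} (with $a$ replaced by $a+1$), and for $|\la|>1$ chain Proposition~\ref{1a2} at scale $2^m$ into Proposition~\ref{1a4} from $2^m$ to $2^n$, then choose the splitting index $m$ and reassemble. The paper does exactly this through Lemmas~\ref{1a7}--\ref{1a10}, and your identity
\[
\frac{A+\eps(a(C-1)+C)V}{1-\eps W}=\frac{a(1-\eps V)+\eps C(a+1)(1+V)}{1-\eps W},\qquad A=a+\eps C(a+1),
\]
is precisely the content of the paper's Corollary~\ref{1a9}. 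The genuine difference is the choice of $m$. The paper takes the \emph{largest} $m$ compatible with the range constraint $2^{m/2}+n\eps\le n/V$, so that the admissibility is automatic and only the absorption $\frac{2^{n-m}}{n}\eps V\le C\eps\frac{V}{1-\eps V}\la^2$ needs to be checked; this reduces (via Lemma~\ref{1a10}) to a single clean inequality $Cn(1-\eps V)\ge2$. You instead take the \emph{smallest} $m$ with $2^m\ge n/(\kappa V^2)$, making the absorption automatic and pushing the work into the range constraint. Both choices live in the nonempty interval allowed by the two competing constraints, so both work; the paper's is slightly tidier because saturating the range leaves just one inequality with no further case split, whereas saturating the absorption leads to the branch-on-$\max(\eps\sqrt{2n},1)$ and branch-on-$V_{\max}\lessgtr\sqrt{n/\kappa}$ analysis you describe.

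On completeness: you explicitly flag that confirming $2^{m/2}+(n-m)\eps\le n/V$ for your $m$ is the ``genuine obstacle'' and leave the final inequalities only sketched. I have checked that your sketch does close: the $m=0$ case is immediate from $V\le V_{\max}\le n/(1+n\eps)$; for $m\ge1$, your reduction to $1-\eps V\ge\sqrt{2/(\kappa n)}$ is correct (after bounding $(n-m)\eps\le n\eps$), and it does hold for all admissible $V$ once one notes that $m\ge1$ already forces $n\ge4$ (because $m\ge1$ needs $n2^{-n/2}<\sqrt{n/\kappa}$), and for $n\ge3$ one has $\sqrt n(\sqrt\kappa-1)\ge\sqrt2$ resp.\ $\sqrt{n/2}(\sqrt\kappa-1)\ge1$ in the two $\max$-branches. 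So the proposal is correct, but the concluding arithmetic is an outline rather than a proof; the paper's choice of $m$ avoids most of this by construction (its Lemma~\ref{1a10} bakes the range constraint into the definition of $m$) and is, in the end, less finicky.
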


\medskip

Note that the condition on $\la$ may be rewritten as
\begin{equation}\label{1a6a}
\bigg( \eps + \max \Big( \eps\sqrt{\frac2n}, \frac1n \Big) \bigg) V \le 1 \, ;
\end{equation}
it evidently implies $ \eps V < 1 $.

Remark \ref{invar} applies; Theorem \ref{1a5} is scaling
invariant.\footnote{%
 Invariant are $ \eps $, $ \la^2/r $, $ a\la^2 $.}

We start proving Theorem \ref{1a5}. According to Remark \ref{invar} we
restrict ourselves to the case $r=1$.
The following four lemmas are fragments of the proof; they
will not be reused later.
Throughout we assume that $ \eps,a > 0 $, $ f_1(\eps\la) \le a \la^2 $ for $ |\la| \le
1 $, and use $ C \ge \frac{\sqrt2}{\sqrt2-1} $, $ V $ such that
$ \exp \( \frac{\sqrt2}{\sqrt2-1} \eps \) = 1+C\eps $, $ |\la|
= \frac{ 2^{n/2} V }{ n } $, and $ \eps V < 1 $ (that is, $ |\la| <
\frac{2^{n/2}}{\eps n} $).

\begin{lemma}\label{1a7}
Let $ \eps \le \sqrt2-1 $, $ m \in \{0,1,2,\dots\} $, and $ |\la| \le 1
$. Then
\[
f_{2^m} (\eps\la) \le a \la^2 + C \eps (a+1) \la^2 \, .
\]
\end{lemma}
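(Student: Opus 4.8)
The plan is a one-step reduction to Proposition~\ref{1a2}. Since the standing hypothesis reads $f_1(\eps\la)\le a\la^2$ for $|\la|\le1$, I would first rewrite it as $f_1(\eps\la)\le(a'-1)\la^2$ for $|\la|\le1$ with $a'=a+1$; note $a'\ge1$ because $a>0$. We are in the case $r=1$ with $\eps\le\sqrt2-1$, so the hypotheses of Proposition~\ref{1a2} hold with $a'$ in place of $a$. It therefore gives, for every $m=0,1,2,\dots$,
\[
f_{2^m}(\eps\la)\le\Bigl((a+1)\exp\Bigl(\tfrac{\sqrt2}{\sqrt2-1}\,\eps\Bigr)-1\Bigr)\la^2\qquad\text{for }|\la|\le1 .
\]
Next I would insert the defining relation $\exp\bigl(\tfrac{\sqrt2}{\sqrt2-1}\eps\bigr)=1+C\eps$ from the standing notation and expand $(a+1)(1+C\eps)-1=a+C\eps(a+1)$, which is precisely the asserted bound $f_{2^m}(\eps\la)\le a\la^2+C\eps(a+1)\la^2$.

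I do not expect any genuine obstacle: Lemma~\ref{1a7} is just the elementary small-$\la$ estimate (doubling $r$ from $1$ to $2^m$ while keeping $|\la|\le1$) restated in the constants fixed for Theorem~\ref{1a5}, and $V$ plays no role in it. The only points requiring care are the bookkeeping — that the parameter called $a$ in Proposition~\ref{1a2} must be taken to be $a+1$ here, and that $C\ge\tfrac{\sqrt2}{\sqrt2-1}>0$ so that $C$ is a legitimate positive constant. The real work of Theorem~\ref{1a5} will come in the later fragments, which must cover $|\la|$ up to order $2^{n/2}/n$ by combining this geometric bound with the polynomial-in-$n$ estimate of Proposition~\ref{1a4}; Lemma~\ref{1a7} only disposes of the trivial end of that range.
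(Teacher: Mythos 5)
Your proof is correct and is essentially identical to the paper's: both apply Proposition~\ref{1a2} with $r=1$ and $a+1$ in place of the Proposition's $a$, then use the defining relation $\exp\bigl(\tfrac{\sqrt2}{\sqrt2-1}\eps\bigr)=1+C\eps$ and the identity $(1+C\eps)(a+1)-1=a+C\eps(a+1)$. No differences worth noting.
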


\begin{proof}
Prop.~\ref{1a2} with $ r=1 $ and $ a+1 $ in place of $a$ gives
\[
f_{2^m} ( \eps\la ) \le \( (1+C\eps) (a+1) -
1 \) \la^2 \quad \text{for } |\la| \le 1 \, .
\]
And $ (1+C\eps)(a+1) - 1 = a + C \eps (a+1) $.
\end{proof}

\begin{lemma}\label{1a8}
Let $ \eps \le \sqrt2-1 $, $ m \in \{0,1,2,\dots,n-1\} $, and $ |\la|
\le \frac{ 2^{(n-m)/2} }{ 1 + n 2^{-m/2} \eps } $. Then
\[
f_{2^n} (\eps\la) \le \frac{ a + C \eps ( a + 1 ) }{ 1
- \eps V } \la^2 + \frac{2^{n-m}}n \eps V \, .
\]
\end{lemma}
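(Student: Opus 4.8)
The plan is to reach level $2^n$ in two stages. First I invoke Lemma~\ref{1a7} to obtain a clean quadratic bound at the intermediate level $2^m$, and then I feed that bound into Proposition~\ref{1a4} to carry out the remaining $n-m$ doublings. Since $r=1$ is fixed, abbreviate $a' = a + C\eps(a+1)$, so that Lemma~\ref{1a7} reads $f_{2^m}(\eps\la) \le a'\la^2$ for $|\la|\le1$, i.e.\ $f_{2^m}(\mu)\le(a'/\eps^2)\mu^2$ for $|\mu|\le\eps$. Writing $\de = \eps\,2^{-m/2}$ so that $\de\,2^{m/2} = \eps$, this is exactly the hypothesis of Proposition~\ref{1a4} with $r$ replaced by $2^m$, with $a$ replaced by $a'/\eps^2$, and with $n-m$ in place of its ``$n$''.

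Proposition~\ref{1a4} then gives, using $2^{(n-m)/2}\,2^{m/2} = 2^{n/2}$, that for $|\mu| \le \frac{\de\,2^{n/2}}{1+(n-m)\de}$,
\[
f_{2^n}(\mu) \le \frac{(a'/\eps^2)\,\mu^2}{1 - \frac{(n-m)|\mu|}{2^{n/2}}} + \frac{2^{(n-m)/2}|\mu|}{2^{m/2}}\,.
\]
Substituting $\mu = \eps\la$, the admissible range becomes $|\la| \le \frac{2^{(n-m)/2}}{1+(n-m)\eps\,2^{-m/2}}$, which contains the range $|\la| \le \frac{2^{(n-m)/2}}{1+n2^{-m/2}\eps}$ imposed in the statement (as $n-m\le n$); and the bound becomes
\[
f_{2^n}(\eps\la) \le \frac{a'\la^2}{1 - \frac{(n-m)\eps|\la|}{2^{n/2}}} + \frac{2^{(n-m)/2}\eps|\la|}{2^{m/2}}\,.
\]

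It remains to rephrase the two summands through $V$. With $r=1$ we have $V = \frac{n|\la|}{2^{n/2}}$, hence $\frac{(n-m)\eps|\la|}{2^{n/2}} = \frac{n-m}{n}\,\eps V \le \eps V < 1$; thus the denominator of the first summand is $\ge 1-\eps V > 0$, so that summand is at most $\frac{a'\la^2}{1-\eps V} = \frac{a+C\eps(a+1)}{1-\eps V}\la^2$. For the second summand, $\frac{2^{(n-m)/2}}{2^{m/2}} = 2^{(n-2m)/2}$ and $|\la| = \frac{2^{n/2}V}{n}$, so it equals $2^{(n-2m)/2}\,\eps\cdot\frac{2^{n/2}V}{n} = \frac{2^{n-m}}{n}\,\eps V$, since $2^{(n-2m)/2}\,2^{n/2} = 2^{n-m}$. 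This is the asserted inequality. I expect the only delicate point to be the bookkeeping of the powers of $2$ and the verification that the $\la$-range assumed in the statement lies inside the one delivered by Proposition~\ref{1a4}; everything else is a straightforward substitution.
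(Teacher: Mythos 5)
Your proof is correct and follows the same route as the paper: apply Lemma~\ref{1a7} at the intermediate level $2^m$, feed the resulting quadratic bound into Proposition~\ref{1a4} with $r=2^m$, $\de=2^{-m/2}\eps$, $a$ replaced by $(a+C\eps(a+1))/\eps^2$, and $n-m$ in place of $n$, then use $n-m\le n$ to relax both the denominator and the admissible $\la$-range, and finally rewrite everything in terms of $V$. The only cosmetic difference is that the paper replaces $n-m$ by $n$ in the denominator one step earlier, before substituting $\mu=\eps\la$, whereas you carry $n-m$ to the end and then bound $\frac{n-m}{n}\eps V \le \eps V$; the content is identical.
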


\begin{proof}
By Lemma \ref{1a7}, $ f_{2^m}(\la) \le \frac{A}{\eps^2} \la^2 $ for
$ |\la| \le \eps $, where $ A = a + C \eps ( a + 1 )
$. Thus, the conditions of Prop.~\ref{1a4} are satisfied for $ r=2^m
$, $ \de = 2^{-m/2} \eps $ and $ a = A/\eps^2 $. Taking also $n-m$ in
place of $n$ we get from Prop.~\ref{1a4}
\[
f_{2^{n-m}2^m} (\la) \le \frac{A}{\eps^2} \frac{\la^2}{ 1
- \frac{(n-m)|\la|}{2^{(n-m)/2}\sqrt{2^m}}} + \frac{ 2^{(n-m)/2} |\la|
}{\sqrt{2^m}}
\]
for $ |\la| \le \frac\de{1+(n-m)\de} 2^{(n-m)/2} \sqrt{2^m} $.
Therefore,
\[
f_{2^n} (\la) \le \frac{A}{\eps^2} \frac{\la^2}{ 1
- \frac{n|\la|}{2^{n/2}} } + 2^{\frac n2 - m} |\la| \quad \text{for }
|\la| \le \frac\de{1+n\de} 2^{n/2} \, .
\]
That is,
\[
f_{2^n} (\eps\la) \le \frac{A}{ 1 - \frac{n\eps|\la|}{2^{n/2}} } \la^2
+ 2^{\frac n2 - m} \eps |\la| = \frac{A}{1-\eps V} \la^2
+ \frac{2^{n-m}}n \eps V
\]
for $ |\la| \le \frac\de{1+n\de} \frac1\eps 2^{n/2} = \frac{
2^{(n-m)/2} }{ 1 + n 2^{-m/2} \eps } $.
\end{proof}

Taking into account that
\[
a + C \eps \frac{ a + 1 + aV }{ 1-\eps V } - \frac{ a + C \eps
( a + 1 ) }{ 1-\eps V } = \frac{ (C-1) \eps aV }{ 1-\eps V
} \ge 0
\]
we get the following.

\begin{corollary}\label{1a9}
Let $ \eps \le \sqrt2-1 $, $ m \in \{0,1,2,\dots,n-1\} $, and $ |\la|
\le \frac{ 2^{(n-m)/2} }{ 1 + n 2^{-m/2} \eps } $. Then
\[
f_{2^n} (\eps\la) \le a \la^2 + C \eps ( a
+ 1 ) \frac{1+V}{1-\eps V} \la^2 - C \eps
\frac{V}{1-\eps V} \la^2 + \frac{2^{n-m}}n \eps V \, .
\]
\end{corollary}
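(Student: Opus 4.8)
The plan is to obtain Corollary \ref{1a9} directly from Lemma \ref{1a8}, using only the algebraic identity displayed just above the corollary. Lemma \ref{1a8} already gives, under exactly the hypotheses of the corollary ($\eps \le \sqrt2-1$, $m \in \{0,\dots,n-1\}$, $|\la| \le 2^{(n-m)/2}/(1+n2^{-m/2}\eps)$),
\[
f_{2^n}(\eps\la) \le \frac{a + C\eps(a+1)}{1-\eps V}\,\la^2 + \frac{2^{n-m}}{n}\,\eps V \, ,
\]
so it suffices to bound the $\la^2$-coefficient $\dfrac{a + C\eps(a+1)}{1-\eps V}$ from above by
\[
a + C\eps(a+1)\frac{1+V}{1-\eps V} - C\eps\frac{V}{1-\eps V} \, .
\]

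First I would rewrite this target expression by collecting the two $C\eps$-terms over the common denominator $1-\eps V$: since $(a+1)(1+V)-V = a+1+aV$, the target equals $a + C\eps\,\dfrac{a+1+aV}{1-\eps V}$. Then the inequality to be checked is
\[
\frac{a + C\eps(a+1)}{1-\eps V} \le a + C\eps\,\frac{a+1+aV}{1-\eps V} \, ,
\]
which, on moving the left-hand side to the right, is exactly the assertion $0 \le \dfrac{(C-1)\eps aV}{1-\eps V}$ --- precisely the identity stated before the corollary. This nonnegativity is clear: $C \ge \frac{\sqrt2}{\sqrt2-1} > 1$ by the standing choice of $C$, the quantities $\eps, a, V$ are nonnegative, and $1-\eps V > 0$ is part of the standing assumptions of the proof (it is forced by the hypothesis $|\la| \le 2^{(n-m)/2}/(1+n2^{-m/2}\eps)$, which gives $\eps V \le \eps n 2^{-m/2}/(1+\eps n 2^{-m/2}) < 1$). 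Substituting this bound into the estimate from Lemma \ref{1a8}, and splitting off the $\la^2$-factors, yields the corollary, the residual term $\frac{2^{n-m}}{n}\eps V$ being carried over verbatim.

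I do not expect any real obstacle here: this is a bookkeeping step whose only purpose is to display the bound of Lemma \ref{1a8} in the shape ``$a\la^2$ plus a controlled correction $C\eps(a+1)\frac{1+V}{1-\eps V}\la^2$ minus a benign surplus $C\eps\frac{V}{1-\eps V}\la^2$'', which is the form later needed for the optimization over $m$ in the proof of Theorem \ref{1a5}. The only points requiring a little care are not to lose the sign in the identity --- i.e.\ to use $C>1$ --- and to remember that the subtracted term $-C\eps\frac{V}{1-\eps V}\la^2$ will subsequently simply be dropped.
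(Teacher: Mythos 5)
Your proposal is correct and matches the paper's argument exactly: the paper also deduces Corollary~\ref{1a9} from Lemma~\ref{1a8} via the displayed identity $a + C\eps\frac{a+1+aV}{1-\eps V} - \frac{a+C\eps(a+1)}{1-\eps V} = \frac{(C-1)\eps aV}{1-\eps V} \ge 0$, together with the observation $(a+1)(1+V)-V = a+1+aV$. The only difference is that you spell out why $1-\eps V>0$ and $C>1$, which the paper leaves as standing assumptions stated earlier in the proof of Theorem~\ref{1a5}.
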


Lemma \ref{1a7} for $ m=n $ gives Theorem \ref{1a5} in the case $
|\la| \le 1 $, that is, $ \frac V n \le 2^{-n/2} $. For greater
$ |\la| $ (and $V$) we'll obtain Theorem \ref{1a5} from
Corollary \ref{1a9}, choosing $m$ as follows. (Recall \eqref{1a6a}.)

\begin{lemma}\label{1a10}
If $ \frac V n > 2^{-n/2} $ and $ \( \eps + \frac1n \) V \le 1
$ (that is, $ 1 < |\la| \le \frac{2^{n/2}}{\eps n + 1} $), then there
exists (evidently unique) $ m \in \{ 0,1,2,\dots,n-1 \} $ such that
\[
1 \le \frac{ (1 - \eps V) n }{ 2^{m/2} V } < \sqrt 2 \, .
\]
\end{lemma}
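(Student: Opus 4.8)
The plan is to prove the existence of $m$ by a monotonicity-and-intermediate-value argument over the integers $\{0,1,\dots,n-1\}$, checking the two endpoints and using that consecutive values of the relevant quantity differ by a factor of $\sqrt2$. Introduce the quantity $\phi(m) = \dfrac{(1-\eps V)n}{2^{m/2}V}$ for $m \in \{0,1,\dots,n\}$. Since $\eps V < 1$ (this follows from $(\eps+\frac1n)V\le1$, as noted after \eqref{1a6a}) and $V>0$, $\phi(m)$ is positive and strictly decreasing in $m$, with $\phi(m)/\phi(m+1) = \sqrt2$. We want an $m$ with $\phi(m)\in[1,\sqrt2)$.

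First I would handle the lower end: I claim $\phi(0) \ge 1$, i.e.\ $(1-\eps V)n \ge V$, i.e.\ $(\eps + \tfrac1n)V \le 1$ — which is exactly the hypothesis. (Note $\phi(0) = 1$ is not excluded, and $m=0$ with $\phi(0)\in[1,\sqrt2)$ would then do; but we need to rule out $\phi(0)\ge\sqrt2$ forcing $m\ge1$, which is fine.) Next the upper end: I claim $\phi(n) < 1$, i.e.\ $(1-\eps V)n < 2^{n/2}V$; since $1-\eps V \le 1$ it suffices that $n < 2^{n/2}V$, i.e.\ $\tfrac Vn > 2^{-n/2}$ — which is the other hypothesis. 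So $\phi(0)\ge1$ and $\phi(n)<1$. Because $\phi$ is strictly decreasing on the integer grid with successive ratio $\sqrt2$, there is a largest $m\in\{0,\dots,n-1\}$ with $\phi(m)\ge1$; for that $m$ we automatically have $\phi(m+1)<1$, hence $\phi(m) = \sqrt2\,\phi(m+1) < \sqrt2$. Thus $1\le\phi(m)<\sqrt2$, and uniqueness is immediate from strict monotonicity. One should also verify $V\ne0$ so that $\phi$ is well-defined: indeed $\tfrac Vn>2^{-n/2}>0$ forces $V>0$.

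I do not anticipate a genuine obstacle here; the only points needing care are (i) confirming that the two stated parenthetical reformulations of the hypotheses — "$\tfrac Vn > 2^{-n/2}$" and "$1 < |\la| \le \tfrac{2^{n/2}}{\eps n+1}$" — match the inequalities $\phi(n)<1$ and $\phi(0)\ge1$ respectively, via the substitution $|\la| = \tfrac{2^{n/2}V}{n}$ fixed earlier in the proof, and (ii) making sure the chosen $m$ lands in $\{0,\dots,n-1\}$ rather than reaching $n$, which is guaranteed precisely because $\phi(n)<1$. The argument is essentially: evaluate a decreasing geometric-grid function at its two ends, invoke discreteness, done.
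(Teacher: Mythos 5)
Your proof is correct and takes essentially the same route as the paper: both select the largest $m$ for which $\frac{(1-\eps V)n}{2^{m/2}V}\ge1$, use the factor-$\sqrt2$ geometric decrease to get the upper bound, and verify $0\le m<n$ from the two hypotheses exactly as you do at the endpoints $\phi(0)\ge1$, $\phi(n)<1$.
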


\begin{proof}
The greatest $ m \in \Z $ such that $ 2^{m/2} \le \frac{
(1 - \eps V) n }{ V } $ satisfies $ m<n $, since $ 2^{n/2}
> \frac n V \ge \frac{ (1 - \eps V) n }{ V } $; it also satisfies
$ m \ge 0 $, since $ \( \eps + \frac1n \) V \le 1 \imply \eps nV +
V \le n \imply 1 \le \frac{ (1 - \eps V) n }{ V } $.
\end{proof}

From now on, $m$ is chosen as above.
Note that $ 1 \le \frac{ (1 - \eps V) n }{ 2^{m/2} V } \imply
(2^{m/2} + \eps n)V \le n \imply  |\la| \le \frac{
2^{(n-m)/2} }{ 1 + n 2^{-m/2} \eps } $, thus,
Corollary \ref{1a9} applies, and so, the next lemma completes the
proof of Theorem \ref{1a5}.

\begin{lemma}
Let $ 1 < |\la| \le \frac{ 2^{n/2} }{ \eps n + \max(\eps\sqrt{2n},1) }
$. Then
\[
\frac{2^{n-m}}n \eps V \le C \eps \frac{V}{1-\eps V} \la^2 \, .
\]
\end{lemma}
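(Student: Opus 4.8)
The plan is to reduce the asserted inequality to the single numerical estimate $C(1-\eps V)n\ge 2$ and then to read that estimate off from the hypothesis on $|\la|$. Throughout, recall that in this part of the argument $|\la|=\tfrac{2^{n/2}V}{n}$, hence $\la^{2}=\tfrac{2^{n}V^{2}}{n^{2}}$; moreover $V>0$ (since $|\la|>1$), $n\ge1$ (since $m\in\{0,\dots,n-1\}$ forces $n\ge1$), $\eps>0$, and $1-\eps V>0$ (since $\eps V<1$). Dividing the claim $\tfrac{2^{n-m}}{n}\eps V\le C\eps\tfrac{V}{1-\eps V}\la^{2}$ by the positive number $\tfrac{\eps V}{n}$, substituting $\la^{2}=\tfrac{2^{n}V^{2}}{n^{2}}$, and dividing by $2^{n}$, one sees that the claim is equivalent to
\[
2^{m}\ \ge\ \frac{n(1-\eps V)}{C V^{2}}\,.
\]

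For this I would invoke the defining property of $m$ supplied by Lemma~\ref{1a10}, namely $\tfrac{(1-\eps V)n}{2^{m/2}V}<\sqrt2$; multiplying out and squaring gives $2^{m}>\tfrac{(1-\eps V)^{2}n^{2}}{2V^{2}}$. Hence it suffices to verify $\tfrac{(1-\eps V)^{2}n^{2}}{2V^{2}}\ge\tfrac{n(1-\eps V)}{CV^{2}}$, which, after cancelling the positive factor $\tfrac{n(1-\eps V)}{2V^{2}}$, is precisely $C(1-\eps V)n\ge 2$.

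It remains to prove $C(1-\eps V)n\ge2$. Since $\max(\eps\sqrt{2n},1)\ge1$, the hypothesis $|\la|\le\tfrac{2^{n/2}}{\eps n+\max(\eps\sqrt{2n},1)}$ gives $\tfrac{2^{n/2}V}{n}\le\tfrac{2^{n/2}}{\eps n+1}$, i.e.\ $V\le\tfrac{n}{\eps n+1}$; therefore $1-\eps V\ge1-\tfrac{\eps n}{\eps n+1}=\tfrac1{\eps n+1}$, so $C(1-\eps V)n\ge\tfrac{Cn}{\eps n+1}$. Now $\tfrac{Cn}{\eps n+1}\ge2$ is equivalent to $n(C-2\eps)\ge2$, and this holds because $n\ge1$ while, using $C\ge\tfrac{\sqrt2}{\sqrt2-1}=2+\sqrt2$ together with $\eps\le\sqrt2-1$, we get $C-2\eps\ge(2+\sqrt2)-2(\sqrt2-1)=4-\sqrt2>2$. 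This finishes the lemma, and with Corollary~\ref{1a9} it finishes the proof of Theorem~\ref{1a5}. I do not expect any genuine obstacle here: the only step needing care is the first reduction, where one must substitute $\la^{2}=2^{n}V^{2}/n^{2}$ correctly and keep the signs straight so that the chain of equivalences is valid; everything after that is an elementary comparison of the lower bound $2^{m}>(1-\eps V)^{2}n^{2}/2V^{2}$ with the required bound.
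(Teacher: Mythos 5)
Your proof is correct, and the backbone is identical to the paper's: divide out $\eps V$, substitute $\la^2 = 2^n V^2/n^2$ to turn the claim into $C\cdot 2^m V^2 \ge n(1-\eps V)$, invoke the bound $2\cdot 2^m V^2 > n^2(1-\eps V)^2$ supplied by the choice of $m$ in Lemma~\ref{1a10}, and reduce to the numerical estimate $Cn(1-\eps V)\ge 2$. Where you genuinely diverge is in establishing that last estimate. The paper drops the $\max$ to its $\eps\sqrt{2n}$ branch, obtains $(1+\sqrt{2/n})\eps V\le1$, and closes via the slightly fiddly inequality $(1+\sqrt{2/n})(1-\tfrac{2}{Cn})\ge1$, checked by monotonicity at $n=1$. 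You instead drop the $\max$ to its $1$ branch, obtain $V\le n/(\eps n+1)$, hence $1-\eps V\ge 1/(\eps n+1)$, and are left with the cleaner linear condition $n(C-2\eps)\ge2$, which follows at once from $C\ge 2+\sqrt2$ and $\eps\le\sqrt2-1$. Both branches of the $\max$ are available under the hypothesis, and your choice yields a somewhat more elementary finish; the paper's choice is what makes the $\eps\sqrt{2n}$ term appear in the statement of Theorem~\ref{1a5} in the first place, whereas your argument shows that for this particular lemma the $1$ branch already suffices.
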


\begin{proof}
We rewrite the given restriction $ |\la| \le \frac{ 2^{n/2} }{ \eps n
+ \eps\sqrt{2n} }$ in terms of $V$:
\[
\bigg( 1 + \sqrt{\frac2n} \bigg) \eps V \le 1 \, .
\]
We also eliminate $\la$ from the needed inequality:
\[
C \cdot 2^m V^2 \ge n(1-\eps V) \, .
\]
By \ref{1a10}, $ 2 \cdot 2^m V^2 > n^2 (1 - \eps V)^2 $. Thus, it
is sufficient to prove that $ C n^2 (1 - \eps V)^2 \ge 2n (1-\eps
V) $, that is, $ \eps V \le 1 - \frac2{Cn} $. To this end it is
sufficient to prove that $ \( 1 + \sqrt{\frac2n} \)\( 1
- \frac2{Cn} \) \ge 1 $, that is, $ \sqrt2 n - \frac{2\sqrt2}C \ge
\frac2C \sqrt n $, and we may do it for $n=1$ only: $ \sqrt2
- \frac{2\sqrt2}C \ge \frac2C $, that is, $ \sqrt2 \ge
\frac{2(\sqrt2+1)}C $, since $ C \ge \frac2{2-\sqrt2} $. 
\end{proof}

\subsection{Lower bounds}

In this subsection we investigate an arbitrary family of functions $ f_r
: \R \to [0,\infty] $ for $ r \in (0,\infty) $ such that
\begin{equation}\label{bstart}
f_{2r} (\la) \ge 2p f_r \Big( \frac{\la}{p\sqrt2} \Big) -
\frac{1}{p-1} \cdot \frac{\la^2}{2r}
\end{equation}
whenever $ 0<r<\infty $, $ 1<p<\infty $ and $ \frac{|\la|}{\sqrt{2r}}
\le p-1 $. (The functions \eqref{2a4} satisfy \eqref{bstart} by
Prop.~\ref{2a9}(b).)

If a family $(f_r)_r$ satisfies \eqref{bstart}, then for arbitrary $
s \in (0,\infty) $ the rescaled family $ (g_r)_r $ defined by
\eqref{1.2}, that is, $ g_r (\la) = f_{s^2 r} (s\la) $, satisfies
\eqref{bstart}.

\begin{lemma}\label{1b1}
Let $ a \ge 1 $, $ \eps \ge 0 $, $ r>0 $, and $ \frac{\eps}{\sqrt r}
< \sqrt2 $. If
\[
f_r(\eps\la) \ge (a-1) \la^2 \quad \text{for } |\la| \le 1 \, ,
\]
then
\[
f_{2r} (\eps\la) \ge \bigg( a \Big( 1 - \frac{\eps}{\sqrt{2r}} \Big)
-1 \bigg) \la^2 \quad \text{for } |\la| \le 1 \, .
\]
\end{lemma}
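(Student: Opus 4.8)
The plan is to follow the template of Lemma~\ref{1a1}, applying \eqref{bstart} once with a carefully chosen exponent $p$. First, by the rescaling \eqref{1.2} --- exactly as in Remark~\ref{invar}, which applies here because \eqref{bstart} is also preserved by \eqref{1.2} --- it is enough to treat the case $r=1$; the hypothesis and conclusion depend only on the invariant $\eps/\sqrt r$ (and on the invariants $a$ and $\la$). The case $\eps=0$ is disposed of directly: then the hypothesis says $f_1(0)\ge(a-1)\la^2$ for $|\la|\le1$, so $f_1(0)\ge a-1$; applying \eqref{bstart} at $\la=0$ with $p=2$ gives $f_2(0)\ge4f_1(0)\ge f_1(0)\ge a-1\ge(a-1)\la^2$ for $|\la|\le1$, which is the asserted conclusion since $a(1-0)-1=a-1$. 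So assume $0<\eps<\sqrt2$ and $r=1$.

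Now set
\[
p=\frac{1}{\,1-\eps/\sqrt2\,}\in(1,\infty),
\]
which is finite precisely because $\eps<\sqrt2$. It meets the admissibility condition of \eqref{bstart} for every $|\la|\le1$, since $p-1=\dfrac{\eps/\sqrt2}{1-\eps/\sqrt2}\ge\dfrac{\eps}{\sqrt2}\ge\dfrac{\eps|\la|}{\sqrt2}$. Applying \eqref{bstart} (with $r=1$) to $\eps\la$ in place of $\la$, and then bounding $f_1\bigl(\eps\cdot\tfrac{\la}{p\sqrt2}\bigr)\ge(a-1)\bigl(\tfrac{\la}{p\sqrt2}\bigr)^2$ by the hypothesis (legitimate because $|\la/(p\sqrt2)|\le|\la|\le1$ and $a-1\ge0$), we get
\[
f_2(\eps\la)\ \ge\ 2p\,(a-1)\,\frac{\la^2}{2p^2}\ -\ \frac{1}{p-1}\cdot\frac{\eps^2\la^2}{2}\ =\ \frac{a-1}{p}\,\la^2\ -\ \frac{\eps^2}{2(p-1)}\,\la^2.
\]
Substituting $\dfrac1p=1-\dfrac{\eps}{\sqrt2}$ and $\dfrac{\eps^2}{2(p-1)}=\dfrac{\eps}{\sqrt2}\Bigl(1-\dfrac{\eps}{\sqrt2}\Bigr)$ collapses the right-hand side to
\[
\Bigl(1-\frac{\eps}{\sqrt2}\Bigr)\Bigl(a-1-\frac{\eps}{\sqrt2}\Bigr)\la^2=\Bigl(a\bigl(1-\tfrac{\eps}{\sqrt2}\bigr)-1+\tfrac{\eps^2}{2}\Bigr)\la^2\ \ge\ \Bigl(a\bigl(1-\tfrac{\eps}{\sqrt2}\bigr)-1\Bigr)\la^2,
\]
which is the conclusion (indeed with $\tfrac{\eps^2}{2}\la^2$ to spare).

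I expect the only genuine obstacle to be spotting the right $p$. The naive analogue of the choice $p=1+\eps$ used in Lemma~\ref{1a1}, namely $p=1+\eps/\sqrt2$, does not make the shrinkage factor $1/p$ of the main term and the penalty $\eps^2/(p-1)$ combine into the single loss factor $1-\eps/\sqrt{2r}$; the reciprocal choice $p=(1-\eps/\sqrt2)^{-1}$ is exactly what forces the cancellation, and it is the same device that reappears in Lemma~\ref{1a3}. Everything else --- the admissibility range of \eqref{bstart}, the sign condition $a\ge1$, and the reduction to $r=1$ --- is routine.
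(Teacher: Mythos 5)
Your proof is correct and takes essentially the same route as the paper's: the same choice $p=(1-\eps/\sqrt2)^{-1}$, the same one-shot application of \eqref{bstart}, and the same algebraic collapse (the paper phrases the final step as $\frac{\eps^2}{2}=\bigl(\frac{p-1}{p}\bigr)^2\le\frac{(p-1)^2}{p}$ while you expand the factorization, but the content is identical). One small aside: your closing remark that the naive $p=1+\eps/\sqrt2$ fails is not actually true --- with that choice one still gets $\frac{a-1}{1+\eps/\sqrt2}-\frac{\eps}{\sqrt2}\ge a(1-\eps/\sqrt2)-1$ via $\frac{1}{1+x}\ge1-x$ --- but this is only a comment, not a gap in the proof.
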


\begin{proof}
We restrict ourselves to the case $ r=1 $ according to Remark
\ref{invar}.\footnote{%
 Invariant are $ \eps/\sqrt r $, $ a $, $ \la $.}
We take
\[
p = \frac1{ 1 - \frac{\eps}{\sqrt2} } \, ,
\]
note that $ \frac{ \eps^2 }{ 2 }
= \( \frac{p-1}p \)^2 \le \frac{(p-1)^2}p $, and apply \eqref{bstart}
to $ \eps\la $ in place of $\la$;
\[
f_2 (\eps\la) \ge 2p(a-1)\frac{\la^2}{2p^2}
- \frac1{p-1} \frac{\eps^2 \la^2}{2} = \Big( \frac{a-1}p
- \frac{ \eps^2 }{ 2(p-1) } \Big) \la^2 \ge \Big( \frac a p -
1 \Big) \la^2 \, . \qedhere
\]
\end{proof}

Iterating the transition $ r \mapsto 2r $ we multiply $a$ by $ \( 1
- \frac{\eps}{\sqrt{2r}} \) \( 1 - \frac{\eps}{\sqrt{4r}} \) \( 1
- \frac{\eps}{\sqrt{8r}} \) \dots $; this product cannot be less than
$ 1 - (\sqrt2+1) \frac{\eps}{\sqrt r} $,
since $ (1-a\eps)(1-b\eps) \ge 1-(a+b)\eps $ for $ a,b \ge 0 $, and
$ \frac{\eps}{\sqrt{2r}} + \frac{\eps}{\sqrt{4r}} + \dots =
(\sqrt2+1) \frac{\eps}{\sqrt r} $. Thus, we get the following.

\begin{proposition}\label{1b2}
Let $ a \ge 1 $, $ \eps \ge 0 $, $ r>0 $, and $ \frac{\eps}{\sqrt r}
< \sqrt2 $. If
\[
f_r(\eps\la) \ge (a-1) \la^2 \quad \text{for } |\la| \le 1 \, ,
\]
then, for every $ n=0,1,2,\dots $,
\[
f_{2^n r} (\eps\la) \ge \bigg( a \Big( 1 - (\sqrt2+1) \frac{\eps}{\sqrt
r} \Big) - 1 \bigg) \la^2 \quad \text{for } |\la| \le 1 \, .
\]
\end{proposition}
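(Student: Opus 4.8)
The plan is to iterate Lemma~\ref{1b1} along the geometric chain of scales $r,2r,4r,\dots,2^n r$, keeping track of the coefficient of $\la^2$. Put $a_0=a$ and
\[
a_k \;=\; a\prod_{j=1}^k\Big(1-\frac{\eps}{\sqrt{2^j r}}\Big)\qquad(k\ge1).
\]
One application of Lemma~\ref{1b1} with ``$r$'' replaced by $2^k r$ and ``$a$'' replaced by $a_k$ (its hypothesis $\eps/\sqrt{2^k r}<\sqrt2$ being automatic once $\eps/\sqrt r<\sqrt2$) turns the bound $f_{2^k r}(\eps\la)\ge(a_k-1)\la^2$ on $|\la|\le1$ into $f_{2^{k+1}r}(\eps\la)\ge(a_{k+1}-1)\la^2$ on $|\la|\le1$. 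Hence, starting from the hypothesis $f_r(\eps\la)\ge(a_0-1)\la^2$ and inducting on $k$ (the base case $k=0$ being the hypothesis itself), I obtain $f_{2^n r}(\eps\la)\ge(a_n-1)\la^2$ for $|\la|\le1$, and it remains only to bound $a_n$ from below.

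For that, note each factor $1-\eps/\sqrt{2^j r}$ lies in $(0,1]$, since $\eps/\sqrt{2^j r}\le\eps/\sqrt{2r}=(\eps/\sqrt r)/\sqrt2<1$; so the elementary inequality $(1-u)(1-v)\ge1-u-v$ for $u,v\ge0$, applied inductively, gives
\[
\prod_{j=1}^n\Big(1-\frac{\eps}{\sqrt{2^j r}}\Big)\;\ge\;1-\sum_{j=1}^n\frac{\eps}{\sqrt{2^j r}}\;\ge\;1-\frac{\eps}{\sqrt r}\sum_{j=1}^\infty 2^{-j/2}\;=\;1-(\sqrt2+1)\frac{\eps}{\sqrt r},
\]
using $\sum_{j\ge1}2^{-j/2}=(\sqrt2-1)^{-1}=\sqrt2+1$. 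Thus $a_n\ge a\bigl(1-(\sqrt2+1)\eps/\sqrt r\bigr)$, which is exactly the asserted coefficient.

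The one point that needs care is that Lemma~\ref{1b1} is stated for a coefficient $\ge1$, so the induction is legitimate only if every intermediate $a_k$ stays $\ge1$. I would dispose of this by a case split. If $a\bigl(1-(\sqrt2+1)\eps/\sqrt r\bigr)\le1$, the asserted right-hand side $\bigl(a(1-(\sqrt2+1)\eps/\sqrt r)-1\bigr)\la^2$ is $\le0$, and the conclusion is trivial because $f_{2^n r}\ge0$. Otherwise $a\bigl(1-(\sqrt2+1)\eps/\sqrt r\bigr)>1$, and since the partial products $\prod_{j=1}^k(1-\eps/\sqrt{2^j r})$ are non-increasing in $k$ and bounded below by $1-(\sqrt2+1)\eps/\sqrt r$ (same estimate as above), every $a_k\ge a\bigl(1-(\sqrt2+1)\eps/\sqrt r\bigr)>1$; so the iteration of Lemma~\ref{1b1} runs without obstruction and yields $f_{2^n r}(\eps\la)\ge(a_n-1)\la^2\ge\bigl(a(1-(\sqrt2+1)\eps/\sqrt r)-1\bigr)\la^2$ on $|\la|\le1$, as required. (Alternatively one checks that the proof of Lemma~\ref{1b1} never actually uses $a\ge1$, which removes the case split; but the split is the safe route given only the statement.) Everything here is routine; the only real ``obstacle'' is this bookkeeping that keeps the hypothesis $a_k\ge1$ of Lemma~\ref{1b1} valid all the way along the chain.
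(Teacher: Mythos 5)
Your proposal is correct and takes essentially the same approach as the paper, which gives exactly this argument informally in the text immediately preceding the proposition: iterate Lemma~\ref{1b1}, observe that the coefficient $a$ gets multiplied by $\prod_j(1-\eps/\sqrt{2^j r})$, and bound that product below by $1-(\sqrt2+1)\eps/\sqrt r$ via $(1-u)(1-v)\ge 1-u-v$ and the geometric series $\sum_{j\ge1}2^{-j/2}=\sqrt2+1$. Your care about keeping $a_k\ge1$ along the chain is a reasonable precaution the paper omits; your parenthetical observation is the real resolution — the proof of Lemma~\ref{1b1} never actually uses $a\ge1$ (the lower bound $f_1(\eps\mu)\ge(a-1)\mu^2$ is applied as a hypothesis regardless of sign, and the final algebraic step $\frac{(p-1)^2}{p}\ge\frac{\eps^2}{2}$ is independent of $a$), so the iteration is unconditionally valid.
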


\begin{lemma}\label{1b5}
Let $ a,b,c,\de \ge 0 $ and $ r>0 $. If
\[
f_r(\la) \ge \frac{ a\la^2 }{ 1 + \frac{b|\la|}{\sqrt r} } -
\frac{c|\la|}{\sqrt r} \quad \text{for } |\la| \le \de \sqrt r \, ,
\]
then
\[
f_{2r}(\la) \ge \frac{ a\la^2 }{ 1 + \frac{(b+1)|\la|}{\sqrt{2r}} } -
\frac{(2c+1)|\la|}{\sqrt{2r}} \quad \text{for } (1-\de)|\la| \le
\de \sqrt{2r} \, .
\]
(It may be that $ \de\ge1 $, and then $\la$ is not restricted.)
\end{lemma}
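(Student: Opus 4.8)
The plan is to follow the proof of Lemma \ref{1a3} almost verbatim, but using the lower-bound inequality \eqref{bstart} in place of \eqref{start}. First I would invoke Remark \ref{invar} to reduce to $r=1$: the quantities $b$, $c$, $\de$, $\la^2/r$ and $a\la^2$ are scaling invariant, so nothing is lost. The case $\la=0$ is trivial, since $f_r(0)=0$ for all $r$. So from now on assume $\la\ne0$, together with the admissibility condition $(1-\de)|\la|\le\de\sqrt2$ of the conclusion (in the degenerate case $\de\ge1$ this condition is vacuous and $\la$ is unrestricted).

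The one decision that matters is the choice of exponent in \eqref{bstart}. I would take
\[
p = 1 + \frac{|\la|}{\sqrt2}\,,
\]
the natural counterpart of the $p = 1/(1-|\la|/\sqrt2)$ used in Lemma \ref{1a3}. Then $p>1$ and $p-1 = |\la|/\sqrt2$, so the admissibility restriction $|\la|/\sqrt2 \le p-1$ in \eqref{bstart} holds (with equality); and, abbreviating $\mu = \la/(p\sqrt2)$, one computes $|\mu| = \frac{|\la|/\sqrt2}{1+|\la|/\sqrt2}$, which is $\le\de$ precisely because $(1-\de)|\la|\le\de\sqrt2$ (and automatically $|\mu|<1\le\de$ when $\de\ge1$). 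Hence the hypothesis $f_1(\mu) \ge \frac{a\mu^2}{1+b|\mu|} - c|\mu|$ is applicable.

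It then remains to substitute and simplify. Inequality \eqref{bstart} with $r=1$ gives $f_2(\la) \ge 2p\,f_1(\mu) - \frac1{p-1}\cdot\frac{\la^2}{2}$; since $p-1 = |\la|/\sqrt2$, the last term equals $\frac{|\la|}{\sqrt2}$. Multiplying the hypothesis by $2p>0$ and using $\mu^2 = \la^2/(2p^2)$ and $|\mu| = |\la|/(p\sqrt2)$, the main term becomes
\[
2p\cdot\frac{a\mu^2}{1+b|\mu|} = \frac{a\la^2}{p + \frac{b|\la|}{\sqrt2}} = \frac{a\la^2}{1 + \frac{(b+1)|\la|}{\sqrt2}}\,,
\]
the single clean identity being $p + \frac{b|\la|}{\sqrt2} = 1 + \frac{(b+1)|\la|}{\sqrt2}$, while $2p\cdot c|\mu| = \frac{2c|\la|}{\sqrt2}$. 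Collecting the three contributions yields
\[
f_2(\la) \ge \frac{a\la^2}{1 + \frac{(b+1)|\la|}{\sqrt2}} - \frac{2c|\la|}{\sqrt2} - \frac{|\la|}{\sqrt2} = \frac{a\la^2}{1 + \frac{(b+1)|\la|}{\sqrt2}} - \frac{(2c+1)|\la|}{\sqrt2}\,,
\]
which is exactly the asserted inequality for $r=1$ (note $\sqrt{2r}=\sqrt2$); rescaling back via Remark \ref{invar} recovers general $r$. I do not expect any real obstacle: once $p = 1+|\la|/\sqrt2$ is pinned down, every denominator telescopes as required, and the only point demanding care is the bookkeeping that makes the range $(1-\de)|\la|\le\de\sqrt2$ in the conclusion coincide with the range on which the hypothesis survives the substitution $\la\mapsto\mu$.
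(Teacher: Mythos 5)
Your proof is correct and matches the paper's almost verbatim: the same reduction to $r=1$ via Remark \ref{invar}, the same choice $p = 1 + |\la|/\sqrt2$, the same verification that $|\la|/(p\sqrt2)\le\de$ under the stated restriction, and the same telescoping of the denominator via $p + b|\la|/\sqrt2 = 1 + (b+1)|\la|/\sqrt2$. There is nothing to add.
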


\begin{proof} \let\qed\relax
We restrict ourselves to the case $r=1$ according to Remark
\ref{invar}.\footnote{%
 Invariant are $ b $, $ c $, $ \de $, $ \la^2/r $, $ a\la^2 $.}
Assuming $ \la\ne0 $ we take
\[
p =  1 + \frac{|\la|}{\sqrt{2}}  \, ,
\]
note that $ (1-\de)|\la| \le \de\sqrt2 \imply \frac{|\la|}{p\sqrt2}
\le \de $ (also for $\de\ge1$),
and apply \eqref{bstart};
\begin{multline*}
f_2(\la) \ge 2p \bigg( \frac{ \frac{a\la^2}{2p^2} }{ 1 + b
  \big|\frac{\la}{p\sqrt2}\big| } - c \Big|\frac{\la}{p\sqrt2}\Big|
  \bigg) - \frac1{p-1} \frac{\la^2}2 = \\
= \frac{a\la^2}{p + \frac{b|\la|}{\sqrt{2}}} -
 \sqrt 2 c |\la| - \frac{\la^2}{2(p-1)} = \\
= \frac{ a\la^2 }{ 1 + \frac{|\la|}{\sqrt2} + \frac{b|\la|}{\sqrt2} }
- \sqrt 2 c |\la| - \frac{|\la|}{\sqrt2} = 
\frac{ a\la^2 }{ 1 + \frac{(b+1)|\la|}{\sqrt2} } - (2c+1)
  \frac{|\la|}{\sqrt2} \, . \qquad \rlap{$\qedsymbol$}
\end{multline*}
\end{proof}

\begin{proposition}\label{1b6}
Let $ a,\de \ge 0 $, and $ r>0 $. If
\[
f_r(\la) \ge a\la^2 \quad \text{for } |\la| \le \de \sqrt r \, ,
\]
then (for every $ n=0,1,2,\dots $)
\[
f_{2^n r}(\la) \ge \frac{ a\la^2 }{ 1 + \frac{n|\la|}{2^{n/2}\sqrt r} } -
\frac{2^{n/2}|\la|}{\sqrt r} \quad \text{for }
(1-n\de)|\la| \le \de 2^{n/2} \sqrt r \, .
\]
(It may be that $ n\de\ge1 $, and then $\la$ is not restricted.)
\end{proposition}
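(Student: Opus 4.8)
The plan is to mirror the proof of Proposition \ref{1a4}, with Lemma \ref{1b5} playing the role that Lemma \ref{1a3} played there. I would prove, by induction on $n$, the slightly stronger statement in which the second summand $\frac{2^{n/2}|\la|}{\sqrt r}$ is replaced by $\bigl(1-2^{-n}\bigr)\frac{2^{n/2}|\la|}{\sqrt r}$; since $1-2^{-n}\le1$, the Proposition then follows immediately. For $n=0$ the strengthened inequality reads $f_r(\la)\ge a\la^2$ on the range $|\la|\le\de\sqrt r$, which is exactly the hypothesis, so the base case is done.

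For the inductive step I would first put the $n$-th inequality into the shape demanded by the hypothesis of Lemma \ref{1b5} with $r$ replaced by $2^nr$: using $\frac{n|\la|}{2^{n/2}\sqrt r}=\frac{n|\la|}{\sqrt{2^nr}}$ and $\bigl(1-2^{-n}\bigr)\frac{2^{n/2}|\la|}{\sqrt r}=\frac{(2^n-1)|\la|}{\sqrt{2^nr}}$, the $n$-th inequality is precisely the hypothesis of Lemma \ref{1b5} with parameters $a$, $b=n$, $c=2^n-1$, and range parameter $\de_n$, where $\de_n=\frac{\de}{1-n\de}$ if $n\de<1$ (so that $|\la|\le\de_n\sqrt{2^nr}$ is the very range $(1-n\de)|\la|\le\de\,2^{n/2}\sqrt r$), and $\de_n$ may be taken equal to $1$ when $n\de\ge1$, since then the $n$-th inequality already holds for all $\la$. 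Lemma \ref{1b5} then gives
\[
f_{2^{n+1}r}(\la)\ge\frac{a\la^2}{1+\frac{(n+1)|\la|}{\sqrt{2^{n+1}r}}}-\frac{(2^{n+1}-1)|\la|}{\sqrt{2^{n+1}r}},
\]
because $b+1=n+1$ and $2c+1=2(2^n-1)+1=2^{n+1}-1$; and reversing the rewriting, $\frac{(n+1)|\la|}{\sqrt{2^{n+1}r}}=\frac{(n+1)|\la|}{2^{(n+1)/2}\sqrt r}$ while $\frac{(2^{n+1}-1)|\la|}{\sqrt{2^{n+1}r}}=\bigl(1-2^{-(n+1)}\bigr)\frac{2^{(n+1)/2}|\la|}{\sqrt r}$, so this is exactly the strengthened $(n+1)$-th inequality.

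What remains is to check that the ranges match, and this is where the only (minor) subtlety lies, precisely because Lemma \ref{1b5} is phrased with a finite parameter $\de$ whereas the $n$-th range in Proposition \ref{1b6} can be vacuous. When $n\de<1$ one has $1-\de_n=\frac{1-(n+1)\de}{1-n\de}$, so the conclusion range $(1-\de_n)|\la|\le\de_n\sqrt{2^{n+1}r}$ of Lemma \ref{1b5}, after multiplication by the positive number $1-n\de$, becomes exactly $(1-(n+1)\de)|\la|\le\de\,2^{(n+1)/2}\sqrt r$, the range asserted for $n+1$. When $n\de\ge1$ (hence also $(n+1)\de\ge1$) both the $n$-th and the $(n+1)$-th ranges are vacuous; applying Lemma \ref{1b5} with $\de_n=1$ is legitimate because the $n$-th inequality holds for all $\la$ and in particular for $|\la|\le\sqrt{2^nr}$, and its conclusion range $(1-1)|\la|\le\sqrt{2^{n+1}r}$ is satisfied by every $\la$, as required. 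This completes the induction, and weakening $1-2^{-n}\le1$ gives Proposition \ref{1b6}.
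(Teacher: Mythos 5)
Your proof is correct and follows essentially the same route as the paper: an induction with the strengthened bound $(1-2^{-n})\frac{2^{n/2}|\la|}{\sqrt r}$, invoking Lemma \ref{1b5} with $b=n$, $c=2^n-1$. The only cosmetic difference is that you take $\de_n=1$ in the vacuous case $n\de\ge1$, whereas the paper interprets $\de_n=+\infty$; both handle that case correctly.
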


\begin{proof}
We prove a bit stronger inequality, with the second summand $
-(1-2^{-n}) \frac{2^{n/2}|\la|}{\sqrt r} $ instead of
$ -\frac{2^{n/2}|\la|}{\sqrt r} $, by induction in $n$. Case $n=0$ is
trivial. If the claim holds for $n$, then Lemma \ref{1b5} applies to $
2^n r $, $b=n$, $c=(1-2^{-n}) 2^n = 2^n-1 $, and $ \frac{\de}{1-n\de}
$ (interpreted as $+\infty$ if $ n\de \ge 1 $), giving
\[
f_{2^{n+1} r}(\la) \ge
\frac{ a\la^2 }{ 1 + \frac{(n+1)|\la|}{\sqrt{2^{n+1} r} } } -
\frac{(2^{n+1}-1)|\la|}{\sqrt{2^{n+1} r} }
\]
for $ \(1-\frac{\de}{1-n\de}\) |\la| \le \frac{\de}{1-n\de}
\sqrt{2^{n+1} r} $, that is, $ (1-n\de-\de) |\la| \le \de
\sqrt{2^{n+1} r} $ (and $\la$ is not restricted if $ (n+1)\de \ge 1
$).
\end{proof}

\begin{theorem}\label{1b7}
Let $ \eps \in (0,\sqrt2) $ and $ r,a \in (0,\infty) $. If
\[
f_r(\eps\la) \ge a \la^2 \quad \text{for } |\la| \le \sqrt r \, ,
\]
then, for every $ n = 1,2,\dots $,
\[
f_{2^n r} (\eps\la) \ge a \la^2 - (\sqrt2+1) \eps \Big( a
+ \frac1r \Big) (1+V) \la^2 \qquad \text{for } \;
|\la| \le 2^{n/2} \sqrt r \, ,
\]
where
\[
V = \frac{ n }{ 2^{n/2} } \frac{ |\la| }{ \sqrt r } \, .
\]
\end{theorem}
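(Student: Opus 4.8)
The plan is to follow the proof of Theorem~\ref{1a5} step for step, with its three ingredients replaced by the lower-bound counterparts already in hand: Proposition~\ref{1b2} (the iterated one-step Lemma~\ref{1b1}) and Proposition~\ref{1b6}. As in Theorem~\ref{1a5}, Remark~\ref{invar} together with the rescaling \eqref{1.2} reduces everything to $r=1$; so we are given $f_1(\eps\la)\ge a\la^2$ for $|\la|\le1$, we fix $n\ge1$ and (WLOG) $\la\ne0$, and we write $V=2^{-n/2}n|\la|$. One preliminary remark carries a disproportionate share of the load: since every $f_r$ takes values in $[0,\infty]$, the asserted inequality is automatic whenever its right-hand coefficient $a-(\sqrt2+1)\eps(a+1)(1+V)$ is $\le0$; in particular we may assume $a>(\sqrt2+1)\eps(a+1)$, so that $A:=a-(\sqrt2+1)\eps(a+1)>0$ (and then $\eps<1/(\sqrt2+1)<\sqrt2$, which is all Proposition~\ref{1b2} needs at $r=1$).

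If $|\la|\le1$ (equivalently $V\le2^{-n/2}n$), Proposition~\ref{1b2} applied at $r=1$ with $a+1$ in place of $a$ gives $f_{2^n}(\eps\la)\ge\((a+1)(1-(\sqrt2+1)\eps)-1\)\la^2=A\la^2$ for $|\la|\le1$, and $A\la^2\ge\(a-(\sqrt2+1)\eps(a+1)(1+V)\)\la^2$ simply because $V\ge0$. So from now on $1<|\la|\le2^{n/2}$. Choose $m$ to be the largest element of $\{0,1,\dots,n-1\}$ with $2^{m/2}\le n/V$; it exists because $|\la|\le2^{n/2}$ forces $V\le n$. Maximality of $m$, together with $|\la|>1$ (which reads $2^{n/2}V>n$, so that $2^{(m+1)/2}V>n$ whether $m+1\le n-1$ or $m+1=n$), gives the double inequality
\[
\frac{n}{\sqrt2}\ <\ 2^{m/2}V\ \le\ n \, ,
\]
and its lower and upper halves will discharge the two remaining obligations.

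For $m\ge1$, Proposition~\ref{1b2} (again with $a+1$ for $a$) gives $f_{2^m}(\eps\la)\ge A\la^2$ for $|\la|\le1$, i.e.\ $f_{2^m}(\la)\ge(A/\eps^2)\la^2$ for $|\la|\le\eps$; for $m=0$ the same holds with $A$ read as $a$, by hypothesis. Insert this into Proposition~\ref{1b6} with base $r=2^m$, exponent $n-m$ and $\de=\eps\,2^{-m/2}$; using $2^{(n-m)/2}\sqrt{2^m}=2^{n/2}$ and $n-m\le n$, and returning to the variable $\eps\la$, one obtains
\[
f_{2^n}(\eps\la)\ \ge\ \frac{A\la^2}{1+\eps V}\ -\ \frac{2^{n-m}}{n}\,\eps V ,
\]
valid at our $\la$ because the domain condition of Proposition~\ref{1b6} collapses to $\bigl(1-(n-m)\eps\,2^{-m/2}\bigr)2^{m/2}V/n\le1$, which holds since $2^{m/2}V/n\le1$ — or else that bracket is $\le0$ and there is no restriction at all. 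To finish, use $1/(1+\eps V)\ge1-\eps V$ and $(\sqrt2+1)(a+1)-a=\sqrt2\,a+\sqrt2+1\ge\sqrt2+1$ to check that $\frac{A\la^2}{1+\eps V}$ exceeds $\(a-(\sqrt2+1)\eps(a+1)(1+V)\)\la^2$ by at least $(\sqrt2+1)\eps V\la^2$, uniformly in the two cases $m=0$ and $m\ge1$; then absorb the leftover $\frac{2^{n-m}}{n}\eps V$, noting that after substituting $\la^2=2^nV^2/n^2$ the required inequality $(\sqrt2+1)\eps V\la^2\ge\frac{2^{n-m}}{n}\eps V$ becomes $(\sqrt2+1)2^mV^2\ge n$, which is immediate from the lower half $2^{m/2}V>n/\sqrt2$ of the displayed double inequality and $n\ge1$.

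There is no deep step here; the whole difficulty is the accounting. The two real points are (i) that the chosen $m$ keeps $\la$ inside the narrow domain of Proposition~\ref{1b6} after the split — this is exactly the upper half $2^{m/2}V\le n$ — and (ii) that the leftover $\frac{2^{n-m}}{n}\eps V$ fits into the surplus — this is exactly the lower half $2^{m/2}V>n/\sqrt2$. Making these two halves carry precisely these two burdens, and checking that the margin $\sqrt2+1>2$ is wide enough to cover the degenerate configurations ($n=1$, or $m=0$, where no genuine iteration of Proposition~\ref{1b6} or of Proposition~\ref{1b2} takes place), is the crux.
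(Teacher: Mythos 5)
Your proof is correct and follows essentially the same route as the paper's: reduce to $r=1$, dispose of $|\la|\le 1$ directly via Proposition~\ref{1b2}, and for $1<|\la|\le 2^{n/2}$ pick $m$ so that $2^{(n-m-1)/2}\lesssim|\la|\le 2^{(n-m)/2}$, feed Proposition~\ref{1b2} into Proposition~\ref{1b6}, and absorb the error term using that same dyadic localization of $|\la|$. The only cosmetic differences are that you bound $1/(1+\eps V)$ below by $1-\eps V$ where the paper computes the surplus exactly (both yield the same $(\sqrt2+1)\eps V\la^2$ margin), and you make explicit the harmless reduction to $A>0$.
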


We start proving Theorem \ref{1b7}. According to Remark \ref{invar} we
restrict ourselves to the case $r=1$.\footnote{%
 Invariant are $ \eps $, $ \la^2/r $, $ a\la^2 $.}
The following two lemmas are fragments of the proof; they will not be
reused later. Throughout we assume that $ \eps,a > 0 $, $ f_1(\eps\la)
\ge a \la^2 $ for $ |\la| \le 1 $, and use $ V $ such that $ |\la| =
\frac{ 2^{n/2} V }{ n } $.

\begin{lemma}\label{1b8}
Let $ \eps < \sqrt2 $, $ m \in \{0,1,2,\dots\} $, and $ |\la| \le 1
$. Then
\[
f_{2^m} (\eps\la) \ge a \la^2 - (\sqrt2+1) \eps (a+1) \la^2 \, .
\]
\end{lemma}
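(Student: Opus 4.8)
The plan is to mimic the proof of Lemma~\ref{1a7}, replacing the upper-bound iteration (Proposition~\ref{1a2}) by its lower-bound counterpart (Proposition~\ref{1b2}). Recall that we are in the case $r=1$ and that the standing assumption of this block of lemmas is $f_1(\eps\la) \ge a\la^2$ for $|\la| \le 1$, with $\eps,a>0$ and $\eps<\sqrt2$.

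First I would check that the hypotheses of Proposition~\ref{1b2} are met with $r=1$ and with $a+1$ substituted for the ``$a$'' there: since $a>0$ we have $a+1 \ge 1$, and since $r=1$ the condition $\frac{\eps}{\sqrt r} < \sqrt2$ is just $\eps < \sqrt2$, which holds. Moreover the assumption $f_1(\eps\la) \ge a\la^2$ for $|\la|\le1$ is literally the hypothesis $f_1(\eps\la) \ge \bigl((a+1)-1\bigr)\la^2$ of Proposition~\ref{1b2} after this substitution. Applying Proposition~\ref{1b2} (with its ``$n$'' taken to be $m$) then yields
\[
f_{2^m}(\eps\la) \ge \Bigl( (a+1)\bigl(1 - (\sqrt2+1)\eps\bigr) - 1 \Bigr)\la^2 \qquad \text{for } |\la| \le 1 \, .
\]
Note that the $\la$-range $|\la|\le1$ is the same in hypothesis and conclusion of Proposition~\ref{1b2}, so no range bookkeeping is lost. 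The final step is the elementary identity $(a+1)\bigl(1-(\sqrt2+1)\eps\bigr) - 1 = a - (\sqrt2+1)\eps(a+1)$, which turns the displayed bound into the claimed inequality $f_{2^m}(\eps\la) \ge a\la^2 - (\sqrt2+1)\eps(a+1)\la^2$.

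I do not expect any genuine obstacle: the argument is a one-line invocation of Proposition~\ref{1b2} plus arithmetic. The only point requiring a little care is keeping the two roles of the symbol ``$a$'' straight — the $a$ appearing in the statement of the lemma versus the shifted value $a+1$ fed into Proposition~\ref{1b2} — exactly as in the proof of Lemma~\ref{1a7}, where $a+1$ was passed to Proposition~\ref{1a2}.
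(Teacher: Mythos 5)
Your proof is correct and follows the paper's own argument exactly: both invoke Proposition~\ref{1b2} with $r=1$ and $a+1$ in place of $a$, then apply the identity $(a+1)\bigl(1-(\sqrt2+1)\eps\bigr)-1 = a - (\sqrt2+1)\eps(a+1)$. Nothing to add.
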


\begin{proof}
Prop.~\ref{1b2} with $ r=1 $ and $ a+1 $ in place of $a$ gives
\[
f_{2^m} ( \eps\la ) \ge \( (1-(\sqrt2+1)\eps) (a+1) -
1 \) \la^2 \quad \text{for } |\la| \le 1 \, .
\]
And $ (1-(\sqrt2+1)\eps)(a+1) - 1 = a - (\sqrt2+1) \eps (a+1) $.
\end{proof}

\begin{lemma}
Let $ \eps < \sqrt2 $, $ m \in \{0,1,2,\dots,n-1\} $, and $ ( 1 -
(n-m) 2^{-m/2} \eps ) |\la| \le 2^{(n-m)/2} $. Then
\[
f_{2^n} (\eps\la) \ge \frac{ a - (\sqrt2+1) \eps ( a + 1 ) }{ 1
+ \eps V } \la^2 - \frac{2^{n-m}}n \eps V \, .
\]
\end{lemma}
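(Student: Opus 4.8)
The plan is to mirror, almost line for line, the proof of Lemma~\ref{1a8} (its upper-bound counterpart), replacing Lemma~\ref{1a7} by Lemma~\ref{1b8} and Proposition~\ref{1a4} by Proposition~\ref{1b6}; every inequality reverses. As throughout this subsection we may take $r=1$ by Remark~\ref{invar}, so that $V=n|\la|/2^{n/2}$, hence $\eps V=n\eps|\la|/2^{n/2}$ and $|\la|=2^{n/2}V/n$.

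Write $A=a-(\sqrt2+1)\eps(a+1)$. If $A\le0$ then, since $\eps V\ge0$,
\[
\frac{A}{1+\eps V}\la^2-\frac{2^{n-m}}n\eps V\le0\le f_{2^n}(\eps\la),
\]
so there is nothing to prove; assume $A>0$. By Lemma~\ref{1b8} (applicable since $\eps<\sqrt2$), $f_{2^m}(\eps\mu)\ge A\mu^2$ for $|\mu|\le1$, i.e.\ $f_{2^m}(\mu)\ge(A/\eps^2)\mu^2$ for $|\mu|\le\eps$. Thus the hypothesis of Proposition~\ref{1b6} holds with $r=2^m$, with the coefficient there equal to $A/\eps^2$, and with $\de=2^{-m/2}\eps$ (so that $\de\sqrt r=\eps$).

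Next I would apply Proposition~\ref{1b6} with $n-m$ in place of $n$. Using $2^{(n-m)/2}\sqrt{2^m}=2^{n/2}$ and $2^{(n-m)/2}/\sqrt{2^m}=2^{n/2-m}$, this gives
\[
f_{2^n}(\mu)\ge\frac{(A/\eps^2)\mu^2}{1+\frac{(n-m)|\mu|}{2^{n/2}}}-2^{n/2-m}|\mu|
\qquad\text{for }\bigl(1-(n-m)2^{-m/2}\eps\bigr)|\mu|\le\eps\,2^{(n-m)/2}.
\]
Substituting $\mu=\eps\la$ turns the restriction into the one in the statement and the last term into $2^{n/2-m}\eps|\la|=\frac{2^{n-m}}n\eps V$. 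Finally, since $n-m\le n$ and $A>0$, the denominator satisfies $1+\frac{(n-m)\eps|\la|}{2^{n/2}}\le1+\eps V$, so the first term is $\ge\frac{A}{1+\eps V}\la^2$; combining the two estimates yields the assertion.

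I do not expect a genuine obstacle: the argument is the sign-reversed twin of Lemma~\ref{1a8}. The only points needing care are (i) the case $A\le0$, which cannot be absorbed into Proposition~\ref{1b6} (that proposition requires a nonnegative coefficient) but is immediate from $f_{2^n}\ge0$; and (ii) keeping the powers of $2$, the rescaling $\mu=\eps\la$, and the harmless weakening $(n-m)\rightsquigarrow n$ in the denominator straight, so that the final bound is expressed through $V$ exactly as stated.
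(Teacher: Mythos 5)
Your proof is correct and follows the paper's own route: restrict to $r=1$, apply Lemma~\ref{1b8} to get $f_{2^m}(\mu)\ge (A/\eps^2)\mu^2$ for $|\mu|\le\eps$, feed this into Proposition~\ref{1b6} with $r=2^m$, $\de=2^{-m/2}\eps$, $n-m$ in place of $n$, then substitute $\eps\la$ and weaken $n-m$ to $n$ in the denominator. Your explicit treatment of the case $A=a-(\sqrt2+1)\eps(a+1)\le0$ is a worthwhile addition: Proposition~\ref{1b6} is stated under the hypothesis $a\ge0$, so the paper's argument as written does not literally apply when $A<0$; your observation that the claimed lower bound is then nonpositive, hence trivially satisfied since $f_{2^n}\ge0$, closes that small gap cleanly.
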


\begin{proof}
By Lemma \ref{1b8}, $ f_{2^m}(\la) \ge \frac{A}{\eps^2} \la^2 $ for
$ |\la| \le \eps $, where $ A = a - (\sqrt2+1) \eps ( a + 1 )
$. Thus, the conditions of Prop.~\ref{1b6} are satisfied for $ r=2^m
$, $ \de = 2^{-m/2} \eps $ and $ a = A/\eps^2 $. Taking also $n-m$ in
place of $n$ we get from Prop.~\ref{1b6}
\[
f_{2^{n-m}2^m} (\la) \ge \frac{A}{\eps^2} \frac{\la^2}{ 1
+ \frac{(n-m)|\la|}{2^{(n-m)/2}\sqrt{2^m}}} - \frac{ 2^{(n-m)/2} |\la|
}{\sqrt{2^m}}
\]
for $ (1-(n-m)\de) |\la| \le \de 2^{(n-m)/2} \sqrt{2^m} $.
Therefore,
\[
f_{2^n} (\la) \ge \frac{A}{\eps^2} \frac{\la^2}{ 1
+ \frac{n|\la|}{2^{n/2}} } - 2^{\frac n2 - m} |\la| \quad \text{for }
(1-(n-m)\de) |\la| \le \de 2^{n/2} \, .
\]
That is,
\[
f_{2^n} (\eps\la) \ge \frac{A}{ 1 + \frac{n\eps|\la|}{2^{n/2}} } \la^2
- 2^{\frac n2 - m} \eps |\la| = \frac{A}{1+\eps V} \la^2
- \frac{2^{n-m}}n \eps V
\]
for $ ( 1 - (n-m) 2^{-m/2} \eps ) |\la| \le 2^{(n-m)/2} $.
\end{proof}

Taking into account that
\begin{multline*}
\frac1\eps \Big( a - \frac{ a - (\sqrt2+1) \eps ( a + 1 ) }{ 1+\eps V
 } \Big) = \frac{ (\sqrt2+1) (a+1) + aV }{ 1+\eps V } \le \\
(\sqrt2+1) (a+1) + aV = (\sqrt2+1) (a+1) (1+V) - \( \sqrt2 (a+1) +
 1 \) V
\end{multline*}
and waiving the factor $ 1 - (n-m) 2^{-m/2} \eps $ we get the
following.

\begin{corollary}\label{1b10}
Let $ \eps < \sqrt2 $, $ m \in \{0,1,2,\dots,n-1\} $, and $ |\la| \le
2^{(n-m)/2} $. Then
\[
f_{2^n} (\eps\la) \ge a \la^2 - (\sqrt2+1) \eps (a+1) (1+V) \la^2
+ \eps V \( \sqrt2 (a+1) + 1 \) \la^2 - \frac{2^{n-m}}n \eps V \, .
\]
\end{corollary}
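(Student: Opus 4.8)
The plan is to read Corollary~\ref{1b10} off the (unlabelled) lemma immediately preceding it, by two purely algebraic manipulations, staying in the normalisation $r=1$ fixed at the start of the proof of Theorem~\ref{1b7}, so that $V=\frac{n|\la|}{2^{n/2}}$ and $|\la|=\frac{2^{n/2}V}{n}$.

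\emph{Bounding the coefficient.} Write $A=a-(\sqrt2+1)\eps(a+1)$. A one-line computation gives $a(1+\eps V)-A=a\eps V+(\sqrt2+1)\eps(a+1)=\eps\bigl((\sqrt2+1)(a+1)+aV\bigr)$, hence
\[
a-\frac{A}{1+\eps V}=\eps\cdot\frac{(\sqrt2+1)(a+1)+aV}{1+\eps V}\,.
\]
Since $\eps,a>0$ and $V\ge0$, the numerator is nonnegative and $1+\eps V\ge1$, so the right-hand side is $\le\eps\bigl((\sqrt2+1)(a+1)+aV\bigr)$. Now apply the identity
\[
(\sqrt2+1)(a+1)+aV=(\sqrt2+1)(a+1)(1+V)-\bigl(\sqrt2(a+1)+1\bigr)V
\]
(expand the right-hand side; the coefficient of $V$ is $(\sqrt2+1)(a+1)-\sqrt2(a+1)-1=a$). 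Rearranging and multiplying through by $\la^2\ge0$ yields
\[
\frac{A}{1+\eps V}\,\la^2\ge a\la^2-(\sqrt2+1)\eps(a+1)(1+V)\la^2+\eps V\bigl(\sqrt2(a+1)+1\bigr)\la^2\,.
\]

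\emph{Widening the range of $\la$.} The factor $1-(n-m)2^{-m/2}\eps$ is always $\le1$; when it is $\ge0$, the hypothesis $|\la|\le2^{(n-m)/2}$ forces $(1-(n-m)2^{-m/2}\eps)|\la|\le|\la|\le2^{(n-m)/2}$, and when it is negative the left-hand side is negative, so in either case the restriction appearing in the preceding lemma is satisfied. Hence that lemma applies whenever $|\la|\le2^{(n-m)/2}$, giving $f_{2^n}(\eps\la)\ge\frac{A}{1+\eps V}\la^2-\frac{2^{n-m}}{n}\eps V$; substituting the displayed lower bound for $\frac{A}{1+\eps V}\la^2$ produces precisely the asserted inequality.

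I do not expect a genuine obstacle: the whole argument is bookkeeping. The only points needing a moment's care are the sign conditions used (to drop $1+\eps V\ge1$ and to multiply by $\la^2\ge0$) and the degenerate case in which the waived factor $1-(n-m)2^{-m/2}\eps$ is negative. The substantive work — the iterations built on Propositions~\ref{1b2} and~\ref{1b6} that produce the preceding lemma — has already been carried out; Corollary~\ref{1b10} merely recasts its conclusion into a form with additive error terms, as will be convenient for the ensuing optimisation over $m$ (parallel to Lemma~\ref{1a10} in the upper-bound subsection).
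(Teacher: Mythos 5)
Your proof is correct and follows essentially the same route as the paper: the algebraic identity and bound you derive are exactly those in the unnumbered display preceding the corollary, and your ``widening the range of $\la$'' step is precisely what the paper calls ``waiving the factor $1-(n-m)2^{-m/2}\eps$.'' You have merely spelled out the sign-case check and the cancellation $(\sqrt2+1)(a+1)-\sqrt2(a+1)-1=a$ that the paper leaves implicit.
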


Now we prove Theorem \ref{1b7} as follows. For $ |\la| \le 1 $ we
just apply Lemma \ref{1b8} with $ m=n $. For $ 1 \le |\la| \le 2^{n/2}
$ we choose $ m \in \{ 0,1,2,\dots,n-1 \} $ such that
\[
2^{\frac{n-m-1}2} \le |\la| \le 2^{\frac{n-m}2} \, ,
\]
apply Corollary \ref{1b10} and note that $ \frac{2^{n-m}}n \eps V \le 
\eps V \( \sqrt2 (a+1) + 1 \) \la^2 $, since $ \frac{2^{n-m}}n \le 2
\cdot 2^{n-m-1} \le 2\la^2 \le (\sqrt2+1) \la^2 $.

\subsection{More on the cumulant generating functions}

First, a general fact.

\begin{lemma}\label{2d1}
Let $X$ be a random variable such that $ \Ex \exp |X| < \infty $ and $
\Ex X = 0 $. Then its cumulant generating function
\[
f(\la) = \log \Ex \exp \la X
\]
satisfies
\[
\Big| f(\la) - \frac12 f''(0) \la^2 \Big| \le \frac{41}{6\E^3} \Big(
\frac{|\la|}{1-|\la|} \Big)^3 \( \exp f(-1) + \exp f(1) \) \quad
\text{for } |\la| < 1 \, .
\]
\end{lemma}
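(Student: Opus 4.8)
The estimate compares the cumulant generating function $f$ with its quadratic Taylor polynomial, so the natural route is a Taylor expansion with an explicit remainder. First I would recall that $f(0)=0$, $f'(0)=\Ex X = 0$, and $f''(0)=\Var X$, so by Taylor's theorem with integral remainder,
\[
f(\la) - \tfrac12 f''(0)\la^2 = \int_0^\la \frac{(\la-t)^2}{2}\, f'''(t)\,\D t \, ,
\]
and hence $\bigl| f(\la) - \tfrac12 f''(0)\la^2 \bigr| \le \tfrac{|\la|^3}{6} \sup_{|t|\le|\la|} |f'''(t)|$. So the whole problem reduces to bounding $|f'''(t)|$ for $|t|<1$ in terms of $\exp f(\pm1)$, with the right power $(1-|\la|)^{-3}$ emerging from the bound on $f'''$ near the endpoints.

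The key step is therefore an estimate on $f'''$. Writing $M(\la)=\Ex\E^{\la X}=\exp f(\la)$, one has $f = \log M$, and differentiating three times expresses $f'''$ as a polynomial in $M'/M,\ M''/M,\ M'''/M$ — concretely $f''' = M'''/M - 3(M''/M)(M'/M) + 2(M'/M)^3$, i.e. the third cumulant of the tilted law. Each ratio $M^{(k)}(\la)/M(\la)$ is the expectation of $X^k \E^{\la X}/M(\la)$ under the tilted probability measure $\D\mathbb P_\la = \E^{\la X}\,\D\mathbb P / M(\la)$; so it suffices to bound, uniformly for $|\la|<1$, the tilted moments $\Ex_\la |X|^k$ for $k\le 3$ (and then the mean $\Ex_\la X$, which also appears). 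The mechanism for the $(1-|\la|)^{-3}$ factor: for $|t|\le|\la|<1$ one bounds $|X|^k \E^{tX} \le \const_k \cdot (1-|\la|)^{-k}\,\E^{|X|}$ pointwise, using the elementary inequality $x^k \le (k/\E)^k (1-\theta)^{-k}\E^{\theta x}$ valid for $x\ge0$, $0\le\theta<1$ (this is where the constants $(k/\E)^k$, and ultimately the numerical constant $41/(6\E^3)$, come from — one optimizes $x\mapsto x^k\E^{-\theta x}$, getting maximum $(k/(\E\theta))^k$, then absorbs the $\theta^{-k}$ into $(1-|\la|)^{-k}$ by choosing $\theta$ close to $1$). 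Summing the contributions $k=1,2,3$ with the combinatorial coefficients $1,3,2$ from the cumulant formula gives a bound of the shape $|f'''(t)| \le \frac{\text{const}}{(1-|\la|)^3}\cdot\frac{\Ex\E^{|X|}}{M(t)}$, and since $M(t)=\exp f(t) \le \exp f(-1) + \exp f(1)$ (convexity of $f$ gives $f(t)\le\max(f(-1),f(1))$, hence $\E^{f(t)}\le \E^{f(-1)}+\E^{f(1)}$... actually one wants a lower bound on $M(t)$; instead use $M(t)\ge 1$ since $\Ex\E^{tX}\ge\E^{t\Ex X}=1$, or more carefully $M(t)\ge \E^{-|t|}\cdot(\text{something})$ — see below) one arrives at the claimed form. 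Finally, multiply by $|\la|^3/6$ from the Taylor remainder.

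The place where care is needed — and what I expect to be the main obstacle — is getting the \emph{lower} bound on $M(t) = \E^{f(t)}$ that is needed to turn "$\Ex_\la|X|^k = \Ex|X|^k\E^{\la X}/M(\la)$ is small" into a clean statement, while simultaneously producing $\exp f(-1)+\exp f(1)$ in the \emph{numerator} rather than the denominator. The resolution is that one does \emph{not} divide by $M$: instead bound $|f'''(t)|\cdot M(t) = |\text{third derivative of } M \text{ minus lower terms}|$ directly by $\const\,(1-|\la|)^{-3}\Ex\E^{|X|}$, using that $M(t)\ge\E^{t\Ex X}=1$ to discard the $M(t)$ in the denominator harmlessly, and then bound $\Ex\E^{|X|}\le\Ex\E^{X}+\Ex\E^{-X}=\exp f(1)+\exp f(-1)$. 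Pinning down the exact numerical constant $41/(6\E^3)$ then amounts to: $41 = $ (a weighted combination of the three optimized constants $(1/\E)^1,\ (2/\E)^2,\ (3/\E)^3$ with weights $1,3,2$, after the common factor $\E^{-3}$ and the Taylor $1/6$ are pulled out), together with a judicious choice of the tilting parameter $\theta$ in the elementary inequality; this is routine once the structure above is in place, but the bookkeeping of constants is the only delicate point.
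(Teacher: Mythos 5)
Your proposal follows the paper's proof essentially step for step: Taylor's theorem reduces the estimate to bounding $f'''$; writing $g(\la)=\Ex\exp\la X$ one uses $f'''=g'''/g-3g'g''/g^2+2g'^3/g^3$, bounds each $|g^{(k)}(\la)|\le(1-|\la|)^{-k}(k/\E)^k\,\Ex\exp|X|$ via $u^k\E^{-u}\le(k/\E)^k$ with $u=(1-|\la|)|X|$, discards the denominators using $g\ge1$, and finally bounds $\Ex\exp|X|\le\exp f(1)+\exp f(-1)$, giving $41=3^3+3\cdot2^2+2$. The only small deviation is your suggestion of an extra tilting parameter $\theta$ to be optimized — the paper's choice $u=(1-|\la|)|X|$ already does this with no free parameter — but this does not affect the substance of the argument.
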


\begin{proof}
In terms of $ g(\la) = \Ex \exp \la X $ we have $ f(\la) = \log g(\la)
$ and
\[
f'''(\la) = \frac{g'''(\la)}{g(\la)} - 3 \frac{ g'(\la) g''(\la) }{
  g^2(\la) } + 2 \frac{ g'^3(\la) }{ g^3(\la) } \, .
\]
Applying the inequality $ u^k \E^{-u} \le \( \frac k \E \)^k $ to $ u
= (1-|\la|) |X| $ we get
\begin{multline*}
|g^{(k)}(\la)| = | \Ex X^k \exp \la X | \le \Ex |X|^k \exp |\la| |X| =
\\
= \Ex \( |X|^k \exp(-(1-|\la|) |X|) \exp |X| \) \le \frac1{(1-|\la|)^k}
 \Big( \frac k \E \Big)^k \Ex \exp |X| \, ;
\end{multline*}
also, $ g(\la) \ge 1 $ (since $ \exp(\la X) \ge 1+\la X $);
thus,
\begin{multline*}
|f'''(\la)| \le |g'''(\la)| + 3 |g'(\la)| |g''(\la)| + 2 |g'(\la)|^3
 \le \\
\le \( \Ex \exp |X| \) \bigg( \Big( \frac3{\E(1-|\la|)} \Big)^3 + 3 \Big(
 \frac1{\E(1-|\la|)} \Big) \Big( \frac2{\E(1-|\la|)} \Big)^2 + 2 \Big(
 \frac1{\E(1-|\la|)} \Big)^3 \bigg) \le \\
\le \frac{ 3^3 + 3 \cdot 2^2 + 2 }{
 \E^3 (1-|\la|)^3 } \Ex \exp |X| = \frac{41}{\E^3} \frac1{(1-|\la|)^3}
 \Ex \exp |X| \, .
\end{multline*}
Finally,
\begin{multline*}
\Big| f(\la) - \frac12 f''(0) \la^2 \Big| = \Big| f(\la) - f(0) - f'(0) \la -
 \frac12 f''(0) \la^2 \Big| \le \\
\le \frac1{3!} | f'''(\theta\la) | |\la|^3
\le \frac{41}{6\E^3} \Big( \frac{ |\la| }{ 1-|\la|} \Big)^3 \Ex \exp |X|
\end{multline*}
for some $ \theta \in [0,1] $; and $ \exp |X| \le \exp(-X) + \exp X $.
\end{proof}

We return to the functions $ f_r(\cdot) $ introduced in \eqref{2a4}
for a process $X$ that satisfies Assumption \ref{2a1}.

\begin{proposition}\label{2d2}
There exist $ r_1, \eps \in (0,\infty) $ such that $ f_r(\eps\la) \le
\la^2 $ for all $ r \in [r_1,2r_1] $ and $ \la \in [-1,1] $.
\end{proposition}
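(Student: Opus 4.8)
The plan is to derive Proposition \ref{2d2} from Lemma \ref{2d1}, applied to the random variables $\int_0^s X_t \, \D t$ after a suitable rescaling. The only non-routine ingredient is that Definition \ref{definition1} supplies finiteness of $\Ex \exp \eps_0 |\int_0^{r_0} X_t \, \D t|$ at a \emph{single} length $r_0$, whereas the proposition asks for a \emph{uniform} exponential bound over a whole interval of lengths; I would recover this uniformity from the splitting property (c) itself, and this is the main obstacle.

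\emph{Step 1: uniform exponential integrability on a length interval.} Fix $r_0, \eps_0$ as in Definition \ref{definition1} and abbreviate $\phi_s(\la) = \Ex \exp \bigl( \la \int_0^s X_t \, \D t \bigr)$, so that $\phi_{r_0}(\la) \le M_0 := \Ex \exp \eps_0 |\int_0^{r_0} X_t \, \D t| < \infty$ for $|\la| \le \eps_0$, while $\phi_s(\la) \ge 1$ for all $s, \la$ by Jensen (as $\Ex \int_0^s X_t \, \D t = 0$). Take the triple $X^0, X^-, X^+$ of Definition \ref{definition1}, normalized by Assumption \ref{2a1} to $c = 1$. For $a \in (0, r_0)$ set $U = \int_{-a}^0 X^-_t \, \D t$, $V = \int_0^{r_0 - a} X^+_t \, \D t$, $W = \int_{-a}^{r_0 - a} X^0_t \, \D t$ and $Z = \bigl( U - \int_{-a}^0 X^0_t \, \D t \bigr) + \bigl( V - \int_0^{r_0-a} X^0_t \, \D t \bigr)$; then $U, V$ are independent, $U$ and $V$ are distributed as $\int_0^a X_t \, \D t$ and $\int_0^{r_0-a} X_t \, \D t$, $W$ is distributed as $\int_0^{r_0} X_t \, \D t$, one has $U + V = W + Z$, and $\Ex \exp |Z| \le 2$ by (c). By the Cauchy--Schwarz inequality, for $|\la| \le \tfrac12 \min(\eps_0, 1)$,
\[
\phi_a(\la) \, \phi_{r_0-a}(\la) = \Ex \exp \la(U+V) = \Ex \exp \la(W+Z) \le \bigl( \Ex \exp 2\la W \bigr)^{1/2} \bigl( \Ex \exp 2\la Z \bigr)^{1/2} \le \sqrt{2 M_0} \, ,
\]
and dividing by $\phi_{r_0-a}(\la) \ge 1$ yields $\phi_a(\la) \le \sqrt{2M_0}$ uniformly in $a \in (0, r_0)$. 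Adding the case $a = r_0$ and using $\exp \mu|y| \le \exp \mu y + \exp(-\mu y)$, one obtains constants $\mu_0 \in (0, \eps_0]$ and $M < \infty$ with $\Ex \exp \bigl( \mu_0 |\int_0^s X_t \, \D t| \bigr) \le M$ for all $s \in (0, r_0]$.

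\emph{Step 2: the Taylor estimate.} Put $r_1 = r_0 / 2$ and $\rho = \mu_0 \sqrt{r_1}$. For $s \in [r_1, 2r_1]$ apply Lemma \ref{2d1} to $Y = \mu_0 \int_0^s X_t \, \D t$, legitimate since $\Ex \exp |Y| \le M$ and $\Ex Y = 0$. Here the cumulant generating function of $Y$ is $\nu \mapsto f_s(\nu \mu_0 \sqrt s)$, its second derivative at $0$ is $\Var Y \le 2 \Ex \exp |Y| \le 2M$ (from $y^2 \le 2 \E^{|y|}$), and $\Ex \exp(\pm Y) \le M$; hence Lemma \ref{2d1} gives, for $|\nu| < 1$,
\[
f_s(\nu \mu_0 \sqrt s) \le M \nu^2 + \frac{41 M}{3 \E^3} \Bigl( \frac{|\nu|}{1 - |\nu|} \Bigr)^3 \, .
\]
Writing $\la = \nu \mu_0 \sqrt s$, so $|\nu| = |\la| / (\mu_0 \sqrt s) \le |\la| / \rho$, and restricting to $|\la| \le \eps$ with $\eps \le \rho / 2$, one gets $|\nu| \le \tfrac12$, thus $\bigl( \tfrac{|\nu|}{1-|\nu|} \bigr)^3 \le 8 (|\la|/\rho)^3 \le 8 \eps \la^2 / \rho^3$, and, since $\mu_0^2 s \ge \mu_0^2 r_1 = \rho^2$,
\[
f_s(\la) \le \frac{M}{\rho^2} \la^2 + \frac{328 M \eps}{3 \E^3 \rho^3} \la^2 \qquad \text{for } |\la| \le \eps, \ s \in [r_1, 2r_1] \, .
\]

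\emph{Step 3: choosing $\eps$.} For $\eps \le 1$ the bracket on the right is at most the fixed constant $B := M/\rho^2 + 328 M/(3 \E^3 \rho^3)$, so taking $\eps = \min(1, \rho/2, 1/\sqrt B) > 0$ gives $f_s(\eps \la) \le B \eps^2 \la^2 \le \la^2$ for all $|\la| \le 1$ and all $s \in [r_1, 2r_1]$, which is the assertion. The decisive point is Step 1 — transferring finiteness of an exponential moment from the single length $r_0$ to all shorter lengths, uniformly, via (c); Steps 2 and 3 are the essentially mechanical application of Lemma \ref{2d1} together with the bookkeeping of the small parameter $\eps$.
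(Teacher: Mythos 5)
Your proof is correct. Both you and the paper propagate exponential integrability from a single length to a whole interval by a H\"older-type inequality built on the splitting decomposition $U+V=W+Z$, and then translate a uniform exponential moment bound into a quadratic bound on the cumulant generating functions. The mechanics differ in two places. First, the paper transfers boundedness from $f_{2r_1}$ to $f_r$ ($r\in[r_1,2r_1]$) by citing Remark \ref{2a10}(b) with $p=2$ (and discarding the nonnegative term $f_s$), while you redo the same calculation by hand via Cauchy--Schwarz on $\Ex\exp\la(W+Z)$; these are the same inequality in different packaging. Second, and more substantively, for the final step the paper downscales the exponential moment to reach $\Ex\exp|\cdot|\le2$ (using $\E^{\eps x}-1\le\eps(\E^x-1)$) and then applies Lemma \ref{1aa2}, which is tailored exactly to the $\le2$ threshold and directly yields $f\le\la^2$; you instead keep the uniform bound $\Ex\exp|Y|\le M$ and apply the third-order Taylor estimate of Lemma \ref{2d1}, followed by explicit bookkeeping of the constants $\rho$, $B$ and an eventual shrinking of $\eps$. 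The paper's route is slightly leaner and avoids touching Lemma \ref{2d1} at this stage; yours is a bit more computational but uses a more general tool and makes the choice of $\eps$ fully explicit. Both are valid; neither has a gap.
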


\begin{proof}
Def.~\ref{definition1} ensures existence of $ r_1 $ such that $
f_{2r_1}(\cdot) $ is bounded on some $ [-\de,\de] $. Given $ r \in
[r_1,2r_1] $, inequality \ref{2a10}(b) applied to $ r $, $ s=2r_1-r $
and $ p=2 $ gives
\[
2 f_r \bigg( \frac{\la}{2} \sqrt{\frac{r}{2r_1}} \, \bigg) \le
f_{2r_1}(\la) + \frac{\la^2}{2r_1}
\]
for $ |\la| \le \sqrt{2r_1} $. Thus, $ f_r $ is bounded on $
[-\frac{\de}{2\sqrt2}, \frac{\de}{2\sqrt2}] $, uniformly on $ r \in
[r_1,2r_1] $ (assuming $ \de \le \sqrt{2r_1} $; otherwise use $
\min(\de, \sqrt{2r_1}) $). And $ \Ex \exp |\la S_r| \le \exp
f_r(-\la) + \exp f_r(\la) $ is bounded by some $C$ for $ |\la| \le
\frac{\de}{2\sqrt2} $ and $ r \in [r_1,2r_1] $. Using the inequality $
\E^{\eps x}-1 \le \eps (\E^x-1) $ for $ \eps\in[0,1] $ (and all $x$)
we get $ \Ex \exp \eps |\la S_r| \le 1-\eps+\eps \Ex \exp |\la S_r|
\le 1+(C-1)\eps $. We take $\eps$ such that $ 1+(C-1)\eps \le 2 $ and
get $ \Ex \exp \frac{\eps\de}{2\sqrt2}|S_r| \le 2 $ for $ r \in
[r_1,2r_1] $. By Lemma \ref{1aa2}, $ f_r\(\frac{\eps\de}{2\sqrt2}\la\)
\le \la^2 $ for $|\la| \le 1 $.
\end{proof}

\begin{remark}\label{2d2a}
(a) Using Prop.~\ref{1a2} we can serve all $ r \in [r_1,\infty) $ by a
single $\eps$.

(b) On the other hand, $ [r_1,2r_1] $ may be replaced with $ [\theta
r_1,2r_1] $ for arbitrary $ \theta \in (0,1] $ (but a small $\theta$
may require small $\eps$).

(c) Combining (a) and (b) we can serve by a single $\eps$ all $ r \in
[c,\infty) $ for a given $ c>0 $.

(d) In particular, for every $r$ the function $ f_r(\cdot) $ is finite
on some neighborhood of $0$ (but a small $r$ may require small
neighborhood).
\end{remark}

\begin{proposition}\label{2d3}
For every $ r \in (0,\infty) $ there exists (evidently unique) $ \si_r
\in [0,\infty) $ such that for every $ c \in (0,\infty) $ and every $
  \la \in (-c,c) $
\[
\Big| f_r(\la) - \frac12 \si_r^2 \la^2 \Big| \le A \Big(
\frac{|\la|}{c-|\la|} \Big)^3 \( \exp f_r(-c) + \exp f_r(c) \) \, ;
\]
here $A$ is an absolute constant.
\end{proposition}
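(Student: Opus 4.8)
The plan is to derive Proposition~\ref{2d3} from the general Lemma~\ref{2d1} by a one-variable rescaling of the argument of the cumulant generating function, handling the degenerate case ($f_r$ infinite at $\pm c$) separately.

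First I would fix $ r \in (0,\infty) $ and pin down $ \si_r $. By Remark~\ref{2d2a}(d), $ f_r $ is finite on some interval $ (-c_0,c_0) $; hence $ \Ex \exp \bigl| \tfrac{c_0}2 S_r \bigr| \le \exp f_r(-\tfrac{c_0}2) + \exp f_r(\tfrac{c_0}2) < \infty $, and $ \Ex S_r = 0 $ since $X$ is centered. The differentiation-under-the-integral estimate from the proof of Lemma~\ref{2d1}, applied to $ \tfrac{c_0}2 S_r $, then shows that $ f_r $ is smooth near $0$, with $ f_r(0)=0 $, $ f_r'(0)=\Ex S_r = 0 $ and $ f_r''(0)=\Var S_r \ge 0 $; I would set $ \si_r := \sqrt{f_r''(0)} \in [0,\infty) $. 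This $ \si_r $ is the only possibility, since the asserted bound forces $ \tfrac12 \si_r^2 = \lim_{\la\to0} f_r(\la)/\la^2 $.

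Next I would fix $ c \in (0,\infty) $ and $ \la \in (-c,c) $. If $ f_r(c)=\infty $ or $ f_r(-c)=\infty $, the right-hand side of the claimed inequality is $+\infty$ for $ \la\ne0 $, so the bound is trivial, while for $ \la=0 $ both sides are $0$. Otherwise $ \Ex \exp |c S_r| \le \exp f_r(-c)+\exp f_r(c) < \infty $, the random variable $ c S_r $ is centered, and its cumulant generating function is $ h(\mu):=\log \Ex \exp(\mu\, c S_r)=f_r(c\mu) $, whence $ h''(0)=c^2 \si_r^2 $. Applying Lemma~\ref{2d1} to $ c S_r $ at the point $ \mu=\la/c\in(-1,1) $ yields
\begin{multline*}
\Bigl| f_r(\la)-\tfrac12 \si_r^2 \la^2 \Bigr| = \Bigl| h(\la/c)-\tfrac12 h''(0)(\la/c)^2 \Bigr| \le \\
\frac{41}{6\E^3} \Bigl( \frac{|\la|/c}{1-|\la|/c} \Bigr)^{3} \bigl( \exp h(-1)+\exp h(1) \bigr) ,
\end{multline*}
and since $ h(\pm1)=f_r(\pm c) $ and $ \frac{|\la|/c}{1-|\la|/c}=\frac{|\la|}{c-|\la|} $, this is exactly the assertion with the absolute constant $ A=\tfrac{41}{6\E^3} $.

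I do not expect a genuine obstacle here: the result is essentially a corollary of Lemma~\ref{2d1}. The only subtlety is that Lemma~\ref{2d1} needs $ \Ex\exp|Y|<\infty $ for the variable $Y$ it is applied to, which for $ Y=c S_r $ can fail when $c$ is large; but precisely in that case $ f_r(c) $ or $ f_r(-c) $ is infinite, so the target inequality holds vacuously and the reduction to Lemma~\ref{2d1} is never actually obstructed.
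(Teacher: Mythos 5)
Your proof is correct and follows essentially the same route as the paper: handle the case of infinite right-hand side trivially, then apply Lemma~\ref{2d1} to the random variable $cS_r$ and substitute $\la/c$, identifying $\si_r^2=\Ex S_r^2$ and $A=\tfrac{41}{6\E^3}$. You spell out the existence/uniqueness of $\si_r$ and the centering hypothesis a little more explicitly, but the core argument is the same as the paper's.
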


\begin{proof}
Nothing to prove when the right-hand side is infinite. When it is
finite (which is ensured for small $c$ by \ref{2d2a}(d)) we apply
Lemma \ref{2d1} to the random variable $ cS_r $ and substitute $ \la/c
$ for $\la$. (Of course, $ \si_r^2 = \Ex S_r^2 $, and $ A =
\frac{41}{6\E^3} \approx 0.3402 $ fits.)
\end{proof}

\section[The chain in action]
  {\raggedright The chain in action}
\label{sect3}
\subsection{Quadratic approximation}

In this subsection we investigate an arbitrary family of functions $ f_r
: \R \to [0,\infty] $ for $ r \in (0,\infty) $ that satisfy
\eqref{start}, \eqref{bstart} and Propositions \ref{2d2},
\ref{2d3}. (These assumptions are satisfied by the functions
introduced by \eqref{2a4} for a process $X$ that satisfies Assumption
\ref{2a1}.)

We denote $ \Median(a,b,c) = a+b+c - \min(a,b,c) - \max(a,b,c) $ for $
a,b,c \in \R $.

\begin{theorem}\label{3ath}
Let $ \eps \in (0,\sqrt2-1) $, $ r \in (0,\infty) $, and
\[
f_r(\eps\la\sqrt r) \le \la^2 \quad \text{for } |\la| \le 1 \, .
\]
Then, for every $ n=2,3,\dots $,
\[
\Big| \frac1{\la^2} f_{2^n r}(\eps\la\sqrt r) - \frac12 r \si_{2^n
  r}^2 \eps^2 \Big| \le A \cdot \Median \( |\la|, (2^{-n/2}n\eps|\la|)^{1/3},
2^{-n/2}|\la| \)
\]
for all $\la$ such that $ 0 < |\la| \le 2^{n/2} \min( \frac1{3n\eps},
\frac19 ) $; here $A$ is some absolute constant.
\end{theorem}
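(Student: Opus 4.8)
The plan is to first reduce to $r=1$ by the scaling invariance of Remark \ref{invar}; one must then show, under the hypothesis $f_1(\eps\la)\le\la^2$ for $|\la|\le1$, that $\bigl|\la^{-2}f_{2^n}(\eps\la)-\tfrac12\si_{2^n}^2\eps^2\bigr|$ is dominated by the stated $\Median$ on the indicated range. Two preliminaries are recorded. First, running the chain of Theorem \ref{1a5} upward from scale $1$ (with $a=1$) bounds $f_{2^n}(\eps\la)$ by $\la^2$ times a factor bounded uniformly in $n$ for $|\la|$ bounded; in particular $f_{2^n}(\pm2\eps)$ is bounded uniformly in $n$ once $n$ exceeds a threshold (the finitely many smaller $n$, for which the range of $\la$ is bounded, being dealt with directly, cf.\ Remark \ref{2d2a}(d)). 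Feeding this into Proposition \ref{2d3} at scale $2^n$ with $c=2\eps$ gives a cubic estimate $\bigl|f_{2^n}(\eps\la)-\tfrac12\si_{2^n}^2\eps^2\la^2\bigr|\le C_1|\la|^3$ for $|\la|\le1$, and, letting $\la\to0$, $\si_{2^n}^2\le2/\eps^2$. Second, dividing \eqref{start} and \eqref{bstart} by $\la^2$, letting $\la\to0$ (Proposition \ref{2d3}) and optimizing $p$ yields the variance-drift bounds $\si_{2r}\le\si_r+1/\sqrt r$ and $\si_{2r}^2\ge\si_r^2-2\si_r/\sqrt r$, hence $|\si_{2^n}^2-\si_{2^m}^2|\le C_2\,2^{-m/2}$ (using $\si_r\le\sqrt2/\eps$).

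For $|\la|\le\sqrt{2^{-n/2}n\eps}$ one is done immediately: here $2^{-n/2}|\la|<|\la|\le(2^{-n/2}n\eps|\la|)^{1/3}$, so $\Median=|\la|$, and dividing the cubic estimate by $\la^2$ gives the bound $C_1|\la|$. For larger $|\la|$ the cubic estimate at scale $2^n$ is useless, since $\exp f_{2^n}(c)$ is enormous for $c>\eps|\la|$; instead one propagates the cubic estimate up the hierarchy. Put $\tilde f_r(\la):=f_r(\la)-\tfrac12\si_r^2\la^2$; rewriting \eqref{start} and \eqref{bstart} for $\tilde f$ gives recursions of the same shape with an extra additive term $\tfrac12|p\si_r^2-\si_{2r}^2|\la^2$ alongside $\tfrac{p}{p-1}\tfrac{\la^2}{2r}$, both controlled by the variance-drift bounds. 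One iterates from a starting scale $2^m$ — at which the cubic estimate still applies to the reduced argument $\eps\la\,2^{-(n-m)/2}$, which requires roughly $m\le n-2\log_2|\la|$ — up to scale $2^n$: the cubic coefficient is multiplied by $\approx2^{-1/2}$ at each doubling, contributing $\approx2^{-(n-m)/2}|\la|$ to $\la^{-2}\tilde f_{2^n}(\eps\la)$, while the accumulated quadratic errors enter with weights $\approx2^{n-k}$ and, with the Hölder exponents taken as small as the chain permits — it forces $p_k-1\gtrsim\eps V/n$, $V:=n2^{-n/2}|\la|$ — but no smaller, sum to $\lesssim\eps V$ plus terms that decrease with $2^m$. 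Optimizing $m$ (equivalently the parameter $\de_0:=2^{-(n-m)/2}|\la|$, capped below by $\de_0=V/n$ at $m=0$) to balance these contributions gives
\[
\la^{-2}\,\tilde f_{2^n}(\eps\la)\;\lesssim\;(2^{-n/2}n\eps|\la|)^{1/3}+\eps V,
\]
and since $\eps V\le\tfrac13$ on the stated range one has $\eps V\le(\eps V)^{1/3}=(2^{-n/2}n\eps|\la|)^{1/3}$; in the range where $|\la|$ is so large that $m$ must be taken $=0$ the cubic contribution is instead $\approx2^{-n/2}|\la|$ and the accumulated errors are again absorbed into the $\Median$. Combined with the first regime and done symmetrically for the lower bound (using \eqref{bstart}), this is the asserted inequality.

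The main obstacle is the bookkeeping of the last step: one must track simultaneously the geometric decay of the cubic coefficient, the geometric amplification (ratio $\approx2$) of the accumulating quadratic errors along the chain, the hard lower bound $p_k-1\gtrsim\eps V/n$ on the Hölder exponents, and the compatibility of all the range restrictions, and then carry out a single optimization over the starting scale $2^m$. It is exactly this cubic-versus-accumulated-quadratic balancing, with the Hölder constraint acting as a floor, that makes the three competing quantities in $\Median$ appear; obtaining an absolute constant $A$ uniform in $n,\eps,\la$ (using only $\eps\le\sqrt2-1$) is where care is required.
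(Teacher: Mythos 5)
Your reductions are sound and track the paper through the cubic estimate and the variance-drift bound: Lemma~\ref{3a1} in the paper is obtained exactly by bounding $f_{2^n}(\pm\eps)$ uniformly via Prop.~\ref{1a2} and feeding that into Prop.~\ref{2d3}; the drift bound $|\si_{2r}-\si_r|\le 1/\sqrt r$ is the paper's unnamed lemma before Corollary~\ref{3a3}; and your first regime ($|\la|\le\sqrt{n\eps}\,2^{-n/4}$) coincides with the paper's \textsc{first case}. One small flaw there: you take $c=2\eps$ in Prop.~\ref{2d3}, hence you need a uniform bound on $f_{2^n}(\pm2\eps)$, and the range restriction in Theorem~\ref{1a5} then fails for small $n$ when $\eps$ is close to $\sqrt2-1$ (for $n=2$ the admissible $|\la|$ is about $1.09<2$); the paper avoids this by taking $c=\eps$ and using Prop.~\ref{1a2}, which has no restriction beyond $|\la|\le1$, $\eps\le\sqrt2-1$.

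Beyond that, your route is genuinely different from the paper's for the middle and third regimes: you propose to subtract the parabola, set $\tilde f_r=f_r-\tfrac12\si_r^2\la^2$, rewrite the recursions \eqref{start}, \eqref{bstart} for $\tilde f$, and iterate that modified chain afresh from scale $2^m$ to $2^n$. The paper instead keeps $f$, applies the already-proven chain Theorems~\ref{1a5} and~\ref{1b7} (rescaled), and injects the variance through the starting coefficient $a=(\tfrac12\si_{2^m}^2\eps^2\pm 8A\de)\de^2$ in Prop.~\ref{3a4}, then compares $\si_{2^m}$ to $\si_{2^n}$ once at the end via Corollary~\ref{3a3}. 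Your approach would make the cubic-versus-accumulated-quadratic structure more explicit, but it forfeits the modularity of Section~\ref{sect2} and requires re-deriving a chain estimate for $\tilde f$ from scratch.

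That re-derivation is precisely where the gap lies. You assert that, with the H\"older exponents at the floor $p_k-1\gtrsim\eps V/n$, the accumulated quadratic errors "sum to $\lesssim\eps V$ plus terms that decrease with $2^m$," and that "optimizing $m$" yields the Median — but none of this is carried out. The summation over $k$ involves three types of contribution (the $(p_k-1)\si^2$ term, the $\si\,2^{-k/2}$ term, and the $\tfrac1{p_k-1}\tfrac{\la_k^2}{2\cdot2^k}$ term), each weighted by a factor $\approx 2^{n-1-k}$ and each subject to a per-step range constraint; showing they collectively stay $\lesssim(2^{-n/2}n\eps|\la|)^{1/3}$ in the middle regime and $\lesssim 2^{-n/2}|\la|$ in the third needs the same kind of careful parameter choices and verifications as the paper's \eqref{3a13}--\eqref{3a24} (with the additional subtlety that the displayed form of the extra term for the lower-bound recursion is $\tfrac12(\si_r^2/p-\si_{2r}^2)\la^2$, not $\tfrac12|p\si_r^2-\si_{2r}^2|\la^2$). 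You acknowledge yourself that "the main obstacle is the bookkeeping of the last step"; but that bookkeeping is not an obstacle to the proof, it \emph{is} the proof, and it is absent. As it stands the proposal is a correct plan with the right skeleton, not a proof.
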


Remark \ref{invar} applies: Theorem \ref{3ath} is scaling
invariant.\footnote{%
 Invariant are $ \eps $, $ \la $, and $ r\si^2 $.}

Note that
\begin{multline*}
\Median \( |\la|, (2^{-n/2}n\eps|\la|)^{1/3},
 2^{-n/2}|\la| \) = \\
\begin{gathered}\includegraphics[scale=1.2]{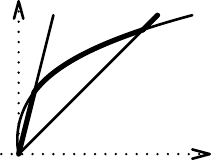}\end{gathered}
\qquad\quad
= \begin{cases}
 |\la| &\text{if } |\la| \le \sqrt{n\eps} \cdot 2^{-n/4},\\
 (2^{-n/2}n\eps|\la|)^{1/3} &\text{if } \sqrt{n\eps} \cdot 2^{-n/4} \le
  |\la| \le \sqrt{n\eps} \cdot 2^{n/2},\\
 2^{-n/2}|\la| &\text{if } \sqrt{n\eps} \cdot 2^{n/2} \le |\la|.
\end{cases}
\end{multline*}

\begin{lemma}\label{3a1}
Let $ \eps \le \sqrt2-1 $. If $ f_1(\eps\la) \le \la^2 $ for $
|\la|\le1 $, then
\[
\Big| f_{2^n}(\eps\la) - \frac12 \si_{2^n}^2 \eps^2 \la^2 \Big| \le A
\Big( \frac{|\la|}{1-|\la|} \Big)^3 \quad \text{for } |\la| < 1 \, ;
\]
here $A$ is an absolute constant.
\end{lemma}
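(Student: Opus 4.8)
The plan is to read off Lemma~\ref{3a1} from the cubic quadratic-approximation bound of Proposition~\ref{2d3} applied at $ r = 2^n $, the only input needed being that the prefactor $ \exp f_{2^n}(-\eps) + \exp f_{2^n}(\eps) $ is controlled by a constant independent of $n$ (and of $ \eps \in (0,\sqrt2-1] $).

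First I would record that uniform control. Rewriting the hypothesis $ f_1(\eps\la) \le \la^2 $ (for $|\la|\le1$) as $ f_1(\eps\la) \le (a-1)\la^2 $ with $ a = 2 $, Proposition~\ref{1a2} with $ r = 1 $ yields
\[
f_{2^n}(\eps\la) \le \Big( 2\exp\bigl( \tfrac{\sqrt2}{\sqrt2-1}\,\eps \bigr) - 1 \Big)\la^2 \qquad \text{for } |\la|\le1 .
\]
Since $ \eps \le \sqrt2-1 $ we have $ \tfrac{\sqrt2}{\sqrt2-1}\eps \le \sqrt2 $, so the bracket is at most the absolute constant $ 2\E^{\sqrt2} - 1 $; taking $ \la = \pm1 $ gives $ f_{2^n}(\pm\eps) \le 2\E^{\sqrt2}-1 $ and hence $ \exp f_{2^n}(-\eps) + \exp f_{2^n}(\eps) \le 2\exp(2\E^{\sqrt2}-1) =: B $, an absolute constant (and in particular finite, which is what Proposition~\ref{2d3} requires).

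Then I would apply Proposition~\ref{2d3} to $ f_{2^n} $ with $ c = \eps $. For $ |\la| < 1 $ the point $ \eps\la $ lies in $ (-\eps,\eps) $, and $ c - |\eps\la| = \eps(1-|\la|) $, so Proposition~\ref{2d3} gives
\[
\Big| f_{2^n}(\eps\la) - \tfrac12 \si_{2^n}^2 \eps^2 \la^2 \Big| \le A_0 \Big( \frac{|\eps\la|}{\eps-|\eps\la|} \Big)^3 \bigl( \exp f_{2^n}(-\eps) + \exp f_{2^n}(\eps) \bigr) \le A_0 B \Big( \frac{|\la|}{1-|\la|} \Big)^3 ,
\]
where $ A_0 = \tfrac{41}{6\E^3} $ is the absolute constant of Proposition~\ref{2d3}. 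Taking $ A = A_0 B $ completes the proof.

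The one step worth flagging is the uniform-in-$n$ bound on $ f_{2^n}(\pm\eps) $: this is exactly where the chain of H\"older inequalities (packaged as Proposition~\ref{1a2}) earns its keep, converting the single-scale hypothesis at $ r=1 $ into scale-free control of the cumulant generating function at the fixed small argument $ \pm\eps $. The rest is substitution, using only that $ \tfrac{\sqrt2}{\sqrt2-1}\eps $ stays bounded on $ (0,\sqrt2-1] $.
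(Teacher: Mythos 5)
Your proof is correct and follows essentially the same route as the paper: both use Proposition~\ref{1a2} with $a=2$, $r=1$, $\la=\pm1$ to obtain the uniform-in-$n$ bound $f_{2^n}(\pm\eps)\le 2\E^{\sqrt2}-1$, and then apply Proposition~\ref{2d3} with $c=\eps$ followed by the substitution $\la\mapsto\eps\la$.
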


\begin{proof}
By Prop.~\ref{1a2} for $ a=2 $, $ r=1 $ and $ \la=\pm1 $,
\[
f_{2^n}(\pm\eps) \le 2 \exp \Big( \frac{\sqrt2}{\sqrt2-1} \eps \Big) -
1 \le 2\E^{\sqrt2} - 1 \, .
\]
By Prop.~\ref{2d3},
\[
\Big| f_{2^n}(\la) - \frac12 \si_{2^n}^2 \la^2 \Big| \le
A_\text{\ref*{2d3}}
\Big( \frac{|\la|}{\eps-|\la|} \Big)^3 \cdot 2 \exp ( 2\E^{\sqrt2} - 1
) \quad \text{for } |\la| < \eps \, ,
\]
that is,
\[
\Big| f_{2^n}(\eps\la) - \frac12 \si_{2^n}^2 \eps^2 \la^2 \Big| \le A
\Big( \frac{\eps|\la|}{\eps-\eps|\la|} \Big)^3 \quad \text{for } |\la|
< 1 \, .
\]
where $ A = 2 \exp ( 2\E^{\sqrt2} - 1 ) A_\text{\ref*{2d3}} $.
\end{proof}

\begin{lemma}
$ | \si_{2r} - \si_r | \le \frac1{\sqrt r} $ for all $ r \in
  (0,\infty) $.
\end{lemma}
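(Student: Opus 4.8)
The plan is to realise $\si_r$ and $\si_{2r}$ as $L^2$-norms of two random variables that differ only by $Z/\sqrt{2r}$, and then combine the triangle inequality in $L^2$ with the exponential-moment bound on $Z$ coming from Lemma~\ref{1aa1}.

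First I would recall that $\si_r^2 = \Ex S_r^2$ (as noted in the proof of Prop.~\ref{2d3}), a finite quantity by Remark~\ref{2d2a}(d). Now apply Lemma~\ref{1aa1} to obtain $U,V,W,Z$; dividing the identity $\sqrt{2r}\,W = \sqrt r\,U + \sqrt r\,V + Z$ by $\sqrt{2r}$ gives
\[
W = \frac{U+V}{\sqrt2} + \frac{Z}{\sqrt{2r}} \, .
\]
Since $U$ and $V$ are independent and each distributed like $S_r$ (hence centered, with $\Ex U^2 = \Ex V^2 = \si_r^2$), the cross term drops out and $\Ex\bigl(\tfrac{U+V}{\sqrt2}\bigr)^2 = \si_r^2$; and since $W$ is distributed like $S_{2r}$, we have $\Ex W^2 = \si_{2r}^2$. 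The triangle inequality in $L^2$ then yields
\[
| \si_{2r} - \si_r | = \Bigl| \, \| W \|_{L^2} - \bigl\| \tfrac{U+V}{\sqrt2} \bigr\|_{L^2} \Bigr| \le \Bigl\| W - \tfrac{U+V}{\sqrt2} \Bigr\|_{L^2} = \frac{(\Ex Z^2)^{1/2}}{\sqrt{2r}} \, .
\]

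It remains to bound $\Ex Z^2$. From $\E^{|z|} \ge 1 + \tfrac12 z^2$, valid for every real $z$, we get $\Ex Z^2 \le 2(\Ex \E^{|Z|} - 1) \le 2$, the last step using $\Ex\exp|Z|\le2$ from Lemma~\ref{1aa1}. Substituting gives $|\si_{2r}-\si_r| \le \sqrt2/\sqrt{2r} = 1/\sqrt r$, as desired.

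I do not expect a genuine obstacle: the argument is essentially the observation that passing from $r$ to $2r$ perturbs the ``standard deviation functional'' by the $L^2$-norm of the small coupling error $Z/\sqrt{2r}$. The only minor points to keep in mind are the finiteness of the relevant second moments (supplied by Remark~\ref{2d2a}(d)) and the identification $\si_r^2 = \Ex S_r^2$, so that both $\si$'s genuinely are $L^2$-norms.
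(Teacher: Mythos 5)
Your proof is correct, and it gives the same constant, but it takes a genuinely different route. The paper works entirely at the level of the functional inequalities \eqref{start} and \eqref{bstart}: it observes that $f_r(\la) = \tfrac12\si_r^2\la^2 + o(\la^2)$, substitutes into \eqref{start} to get $\tfrac12\si_{2r}^2 \le p\cdot\tfrac12\si_r^2 + \tfrac{p}{p-1}\cdot\tfrac1{2r}$, optimizes over $p$ (taking $p = 1 + \tfrac1{\si_r\sqrt r}$) to obtain $\si_{2r} \le \si_r + \tfrac1{\sqrt r}$, and argues symmetrically from \eqref{bstart} for the reverse bound. You instead go back to the probabilistic coupling of Lemma~\ref{1aa1}: you write $W = \tfrac{U+V}{\sqrt2} + \tfrac Z{\sqrt{2r}}$, identify $\si_{2r}$ and $\si_r$ as $L^2$-norms of the two main terms, apply the $L^2$ triangle inequality, and bound $\Ex Z^2 \le 2$ from $\Ex\E^{|Z|}\le 2$ via $\E^{|z|}\ge 1+\tfrac12 z^2$. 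Each approach buys something: yours is shorter and more transparent, since it bypasses the $p$-optimization and lands the constant directly; the paper's is slightly longer but is formulated to apply to \emph{any} family $(f_r)$ satisfying \eqref{start}, \eqref{bstart}, and Propositions~\ref{2d2}, \ref{2d3} -- which is precisely the abstract standing hypothesis of the subsection where this lemma lives -- whereas yours depends on the $f_r$ genuinely being cumulant generating functions of random variables coupled as in Lemma~\ref{1aa1}. For the paper's actual application both arguments are available, but your proof would not literally fit the ``arbitrary family of functions'' framework announced at the start of the subsection without first importing the probabilistic realization.
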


\begin{proof}
Taking into account that $ f_r(\la) = \frac12 \si_r^2 \la^2 + o(\la^2)
$, we get from \eqref{start}
\[
\frac12 \si_{2r}^2 \le p \cdot \frac12 \si_r^2 + \frac{p}{p-1} \cdot
\frac1{2r} \quad \text{for all } p \in (1,\infty) \, .
\]
Taking $ p = 1 + \frac1{\si_r \sqrt r} $ (the minimizer, in fact) we
get $ \frac{p}{p-1} = 1 + \si_r \sqrt r $, $ \si_{2r}^2 \le \(
\si_r + \frac1{\sqrt r} \)^2 $, thus, $ \si_{2r} \le \si_r +
\frac1{\sqrt r} $. It remains to prove that $ \si_{2r} \ge \si_r -
\frac1{\sqrt r} $.

By \eqref{bstart},
\[
\frac12 \si_{2r}^2 \ge \frac1p \cdot \frac12 \si_r^2 - \frac{1}{p-1}
\cdot \frac1{2r} \, ;
\]
assuming $ \si_r > \frac1{\sqrt r} $ (otherwise we have nothing to
prove), we take $ p = \frac{\si_r \sqrt r}{\si_r \sqrt r - 1} $ (the
minimizer), get $ \frac1p = 1 - \frac1{\si_r \sqrt r} $, $ \frac{1}{p-1} =
\si_r \sqrt r - 1 $, and $ \si_{2r}^2 \ge \( \si_r - \frac1{\sqrt r}
\)^2 $.
\end{proof}

\begin{corollary}\label{3a3}
$ | \si_{2^n r} - \si_{2^m r} | \le \frac{\sqrt2}{\sqrt2-1}
\frac1{\sqrt{2^mr}} $ whenever $ m \le n $ and $ r \in (0,\infty) $.
\end{corollary}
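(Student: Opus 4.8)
The plan is to obtain Corollary \ref{3a3} from the preceding lemma by a telescoping argument combined with summing a geometric series. First I would apply the triangle inequality to the chain $\sigma_{2^m r}, \sigma_{2^{m+1}r}, \dots, \sigma_{2^n r}$, writing
\[
| \sigma_{2^n r} - \sigma_{2^m r} | \le \sum_{k=m}^{n-1} | \sigma_{2^{k+1} r} - \sigma_{2^k r} | \, .
\]
Then, applying the preceding lemma with $2^k r$ in place of $r$, each summand is bounded by $1/\sqrt{2^k r}$, so the right-hand side does not exceed $\sum_{k=m}^{n-1} 2^{-k/2} r^{-1/2}$.

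Next I would factor out $2^{-m/2} r^{-1/2} = 1/\sqrt{2^m r}$ and bound the remaining sum $\sum_{j=0}^{n-m-1} 2^{-j/2}$ by the full geometric series $\sum_{j=0}^{\infty} 2^{-j/2} = \frac{1}{1 - 2^{-1/2}} = \frac{\sqrt2}{\sqrt2 - 1}$. This yields exactly the claimed bound $| \sigma_{2^n r} - \sigma_{2^m r} | \le \frac{\sqrt2}{\sqrt2-1} \cdot \frac{1}{\sqrt{2^m r}}$.

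There is essentially no obstacle here; the only point worth a moment's care is that the lemma must be invoked at the geometrically growing scales $2^k r$ rather than at $r$ itself, which is what makes the bounds $1/\sqrt{2^k r}$ summable. Everything else is the routine evaluation of the ratio $\frac{1}{1 - 1/\sqrt2}$.
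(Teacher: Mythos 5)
Your proof is correct and is exactly the argument the paper intends: the corollary is stated without proof immediately after the lemma bounding $|\si_{2r}-\si_r|$, and the telescoping sum plus geometric series is the evident derivation.
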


\begin{proposition}\label{3a4}
Let $ \eps \in (0,\sqrt2-1) $, and
\[
f_1(\eps\la) \le \la^2 \quad \text{for } |\la| \le 1 \, .
\]
Then
\begin{multline*}
\bigg| \frac1{\la^2} f_{2^n}(\eps\la) - \frac12 \si_{2^n}^2 \eps^2
 \bigg| \le \frac12 \eps^2 \bigg( \Big( \si_{2^n} +
 \frac{\sqrt2}{\sqrt2-1} 2^{-m/2} \Big)^2 - \si_{2^n}^2 \bigg) + \\
+ A \de + A \eps \( n 2^{-n/2} |\la| + 2^{-m/2} \de \) \de^{-2}
\end{multline*}
for all $ m \in \{0,1,\dots,n-1\} $, $ \de \in (0,\frac12] $ and $ \la
$ such that
\[
0 < |\la| \le \frac{ 2^{n/2} \de }{ 2n\eps\de + \max( \sqrt{2n} \de\eps,
  2^{m/2} ) } \, ;
\]
here $A$ is some absolute constant.
\end{proposition}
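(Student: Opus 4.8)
The plan is to obtain both estimates by transporting the cubic‑error quadratic approximation of Lemma~\ref{3a1}, applied at the intermediate level $2^m$, up to level $2^n$ by means of the iteration Theorems~\ref{1a5} and~\ref{1b7}; Corollary~\ref{3a3} will be used to trade $\si_{2^m}$ for $\si_{2^n}$, and this is exactly what produces the first summand $\tfrac12\eps^2((\si_{2^n}+\tfrac{\sqrt2}{\sqrt2-1}2^{-m/2})^2-\si_{2^n}^2)$. Throughout, let $A'$ denote the absolute constant of Lemma~\ref{3a1}; I also use the a priori bound $\si_{2^m}^2\eps^2\le8$, which follows from $f_1(\eps\la)\le\la^2$ (letting $\la\to0$ gives $\si_1\eps\le\sqrt2$) together with Corollary~\ref{3a3}.

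For the upper bound, Lemma~\ref{3a1} with $m$ in place of $n$ gives, for $|\la|\le\de\le\tfrac12$ (so that $\tfrac{|\la|}{1-|\la|}\le2|\la|$ and $|\la|^3\le\de\la^2$), the inequality $f_{2^m}(\eps\la)\le(\tfrac12\si_{2^m}^2\eps^2+8A'\de)\la^2$. Setting $\eps'=\eps\de\,2^{-m/2}$ and $a=(\tfrac12\si_{2^m}^2\eps^2+8A'\de)\de^2 2^{-m}$ and writing $\eps\la=\eps'\la'$, this reads $f_{2^m}(\eps'\la')\le a(\la')^2$ for $|\la'|\le\sqrt{2^m}$. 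Since $\eps'\le\eps/2<\sqrt2-1$ and $a>0$, Theorem~\ref{1a5} applies with $r=2^m$ and $n-m$ iteration steps and delivers $f_{2^n}(\eps'\la')\le a(\la')^2+C'\eps'(a+2^{-m})\tfrac{1+V'}{1-\eps'V'}(\la')^2$, where $C'\eps'=\exp(\tfrac{\sqrt2}{\sqrt2-1}\eps')-1$ and $V'=\tfrac{n-m}{2^{n/2}}|\la'|$. Converting back by $(\la')^2=2^m\de^{-2}\la^2$ and $V'=\tfrac{(n-m)2^{m/2}}{2^{n/2}\de}|\la|\le\tfrac{2^{m/2}}{\de}V$ (with $V=n2^{-n/2}|\la|$), the leading term becomes $(\tfrac12\si_{2^m}^2\eps^2+8A'\de)\la^2$, which by Corollary~\ref{3a3} ($\si_{2^m}\le\si_{2^n}+\tfrac{\sqrt2}{\sqrt2-1}2^{-m/2}$) is at most $\tfrac12\si_{2^n}^2\eps^2\la^2$ plus the asserted first summand times $\la^2$ plus $8A'\de\,\la^2$; while for the correction term one uses $a+2^{-m}\le(2+A')2^{-m}$ (from $\si_{2^m}^2\eps^2\le8$ and $\de\le\tfrac12$), $C'\eps'\le K\eps\de\,2^{-m/2}$ for an absolute $K$, and $\eps'V'=\tfrac{n-m}{n}\eps V\le\eps V\le\tfrac12$ in the stated range of $\la$, so $\tfrac{1+V'}{1-\eps'V'}\le2(1+V')$, and after multiplying by $2^m\de^{-2}$ the correction is $\le K'\eps\,\de^{-2}(2^{-m/2}\de+V)\,\la^2=K'\eps(n2^{-n/2}|\la|+2^{-m/2}\de)\de^{-2}\la^2$ for an absolute $K'$. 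Finally, the $\la'$‑range of Theorem~\ref{1a5}, after conversion, is $|\la|\le\tfrac{2^{n/2}\de}{\eps\de(n-m)+\max(\eps\de\sqrt{2(n-m)},2^{m/2})}$, whose denominator does not exceed $2n\eps\de+\max(\sqrt{2n}\de\eps,2^{m/2})$, so it contains the range asserted in the proposition.

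For the lower bound the argument is symmetric. Lemma~\ref{3a1} gives $f_{2^m}(\eps\la)\ge(\tfrac12\si_{2^m}^2\eps^2-8A'\de)\la^2$ for $|\la|\le\de$. If this coefficient is $\le0$ then, since by Corollary~\ref{3a3} one always has $\tfrac12\si_{2^n}^2\eps^2-\tfrac12\eps^2((\si_{2^n}+\tfrac{\sqrt2}{\sqrt2-1}2^{-m/2})^2-\si_{2^n}^2)\le\tfrac12\si_{2^m}^2\eps^2\le8A'\de$, the quantity $\tfrac12\si_{2^n}^2\eps^2$ is already dominated by the asserted error bound and $\tfrac1{\la^2}f_{2^n}(\eps\la)\ge0\ge\tfrac12\si_{2^n}^2\eps^2-(\text{asserted error})$ is immediate. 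Otherwise put $\tilde a=(\tfrac12\si_{2^m}^2\eps^2-8A'\de)\de^2 2^{-m}>0$, rescale as above, and apply Theorem~\ref{1b7} with $r=2^m$ and $n-m$ steps; its range $|\la'|\le2^{n/2}$ imposes nothing beyond the restriction already obtained from Theorem~\ref{1a5}, and converting back (with $\si_{2^m}\ge\si_{2^n}-\tfrac{\sqrt2}{\sqrt2-1}2^{-m/2}$, the squaring being legitimate because in this case $\si_{2^m}>0$, and otherwise $\tfrac12\si_{2^n}^2\eps^2$ is again dominated by the asserted error) and using the same estimates for $C'\eps'$, $\tilde a+2^{-m}$ and $V'$ yields the matching lower bound. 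Taking $A$ a large enough absolute constant then finishes the proof.

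The main obstacle will be the bookkeeping in the rescaling step: one must see that the $\de^{-2}$ in the conclusion is precisely what the substitution $\la'=\tfrac{2^{m/2}}{\de}\la$ generates — through $(\la')^2=2^m\de^{-2}\la^2$ and $V'\le\tfrac{2^{m/2}}{\de}V$ — and that it is harmless, being offset by the small factors $\eps'\propto\eps\de$ and $a,\,2^{-m}$ supplied by the low level $2^m$; one must also check that the restricted $\la$‑range forced by Theorem~\ref{1a5} is no smaller than the one requested. The degenerate case of a non‑positive level‑$m$ coefficient in the lower bound is a minor nuisance, dispatched at once by $f_{2^n}\ge0$.
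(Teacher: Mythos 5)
Your proposal is correct and takes essentially the same route as the paper's own proof: apply Lemma~\ref{3a1} at the intermediate level $2^m$, rescale Theorem~\ref{1a5} (resp.\ Theorem~\ref{1b7}) with $\eps'=\eps\de 2^{-m/2}$, $r=2^m$, $\la'=2^{m/2}\de^{-1}\la$, $n-m$ dyadic steps, then use Corollary~\ref{3a3} to replace $\si_{2^m}$ by $\si_{2^n}$, and the a~priori bound $\eps\si_{2^k}\le 2\sqrt 2$ to control the constant in the correction term. The only refinement in your version is the explicit treatment of the degenerate case $\tfrac12\si_{2^m}^2\eps^2\le 8A'\de$ in the lower bound, where you correctly observe that $f_{2^n}\ge 0$ already gives the claimed inequality (via $\si_{2^n}^2-((\si_{2^n}+c)^2-\si_{2^n}^2)\le\si_{2^m}^2$ with $c=\tfrac{\sqrt2}{\sqrt2-1}2^{-m/2}$); the paper applies Theorem~\ref{1b7} without commenting on the sign of $a$, which your version addresses.
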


\begin{proof}
Using Lemma \ref{3a1} we have for $ |\la| \le \de $
\[
\Big| f_{2^m}(\eps\la) - \frac12 \si_{2^m}^2 \eps^2 \la^2 \Big| \le
A_\text{\ref*{3a1}}
\Big( \frac{|\la|}{1-|\la|} \Big)^3 \le A_\text{\ref*{3a1}}
\frac{\de}{(1-\de)^3} \la^2 \le 8 A_\text{\ref*{3a1}} \de \la^2 \, ,
\]
thus,
\begin{equation}\label{3a5}
f_{2^m}(\eps\la) \le \Big( \frac12 \si_{2^m}^2 \eps^2 + 8
A_\text{\ref*{3a1}} \de \Big) \la^2 \quad \text{for } |\la| \le \de \,
.
\end{equation}
Now we need Theorem \ref{1a5} rescaled as follows ($ a $ will be
chosen later):
\[
\eps_\text{\ref*{1a5}} = 2^{-m/2} \de \eps \, , \quad\!\!
r_\text{\ref*{1a5}} = 2^m \, , \quad\!\! a_\text{\ref*{1a5}} = 2^{-m} a \, ,
\quad\!\! n_\text{\ref*{1a5}} = n-m \, , \quad\!\! \la_\text{\ref*{1a5}} =
2^{m/2} \de^{-1} \la \, .
\]
We note that
\[
(\eps\la)_\text{\ref*{1a5}} = \eps\la \, , \quad
(a\la^2)_\text{\ref*{1a5}} = \de^{-2}a\la^2 \, , \quad (\la/\sqrt
r)_\text{\ref*{1a5}} = \de^{-1}\la \, , \quad (2^n r)_\text{\ref*{1a5}}
= 2^n \, . \quad
\]
The assumptions of \ref{1a5} become: $ 2^{-m/2} \de \eps < \sqrt2-1 $
(holds evidently), and
\[
f_{2^m}(\eps\la) \le \de^{-2} a \la^2 \quad \text{for } |\la| \le \de
\, ;
\]
the latter holds by \eqref{3a5} provided that
\begin{equation}\label{3a6}
a = \Big( \frac12 \si_{2^m}^2 \eps^2 + 8
A_\text{\ref*{3a1}} \de \Big) \de^2 \, .
\end{equation}
The conclusion of \ref{1a5} becomes
\begin{equation}\label{3a7}
f_{2^n}(\eps\la) \le \Big( a + C_\text{\ref*{1a5}} \cdot 2^{-m/2}
\de\eps (a+1) \frac{1+V}{1-2^{-m/2}\de\eps V} \Big) \de^{-2} \la^2 
\end{equation}
for $\displaystyle |\la| \le \frac{ 2^{n/2} \de }{ (n-m)\eps\de + \max(
\sqrt{2(n-m)} \de\eps, 2^{m/2} ) } $ (which holds evidently),
where $a$ is given by \eqref{3a6}, $ V = \frac{n-m}{2^{(n-m)/2}}
\de^{-1} |\la| $, and $ C_\text{\ref*{1a5}} \le
\frac{2(\E^{1/\sqrt2}-1)}{\sqrt2-1} $ (since $ \eps_\text{\ref*{1a5}}
\le \frac{\sqrt2-1}2 $).

We have to prove two bounds, upper and lower, on $ f_{2^n}(\eps\la)
$. For the upper bound, it is sufficient to prove that
\begin{multline*}
\frac12 \si_{2^m}^2 \eps^2 + 8 A_\text{\ref*{3a1}} \de + \Big(
 C_\text{\ref*{1a5}} \cdot 2^{-m/2} \de\eps (a+1)
 \frac{1+V}{1-2^{-m/2}\de\eps V} \Big) \de^{-2} \le \\
\le \frac12 \eps^2 \Big( \si_{2^n} + \frac{\sqrt2}{\sqrt2-1} 2^{-m/2}
 \Big)^2 + A \de + A \eps \( n 2^{-n/2} |\la| + 2^{-m/2} \de \)
 \de^{-2} \, .
\end{multline*}
By Corollary \ref{3a3}, $ \si_{2^m} \le \si_{2^n} +
\frac{\sqrt2}{\sqrt2-1} 2^{-m/2} $; the needed upper bound inequality
is reduced to
\begin{multline*}
8 A_\text{\ref*{3a1}} \de + \Big(
 \frac{2(\E^{1/\sqrt2}-1)}{\sqrt2-1} \cdot 2^{-m/2} \de\eps (a+1)
\frac{1+V}{1-2^{-m/2}\de\eps V} \Big) \de^{-2} \le \\
\le A \de + A \eps \( n 2^{-n/2} |\la| + 2^{-m/2} \de \)
 \de^{-2} \, ,
\end{multline*}
and further to\footnote{%
 From now on, $A$ denotes different absolute constants in different
 inequalities.}
\begin{equation}\label{3a75}
2^{-m/2} \de (a+1) \frac{1+V}{1-2^{-m/2}\de\eps V} \le A \( n
2^{-n/2} |\la| + 2^{-m/2} \de \) \, .
\end{equation}
We note that $ \si_1 \le \sqrt2/\eps $ (since $ f_1(\eps\la) \le \la^2
$); \ref{3a3} gives $ \eps\si_{2^k} \le \eps \( \frac{\sqrt2}\eps +
\frac{\sqrt2}{\sqrt2-1} \) \le 2\sqrt2 $ for all $k$. By \eqref{3a6},
$ a \le 1+A_\text{\ref*{3a1}} $ (since $ \de \le \frac12 $), which
reduces \ref{3a75} to
\[
2^{-m/2} \de \frac{1+V}{1-2^{-m/2}\de\eps V} \le A \( n
2^{-n/2} |\la| + 2^{-m/2} \de \) \, .
\]
Further, $ |\la| \le \frac{2^{n/2}}{2n\eps} $, thus
\[
2^{-m/2} \de\eps V = 2^{-m/2} \de\eps \frac{n-m}{2^{(n-m)/2}} \de^{-1}
|\la| = \eps(n-m)2^{-n/2}|\la| \le \frac12 \, ,
\]
which reduces \ref{3a75} to
\[
2^{-m/2} \de (1+V) \le A \( n 2^{-n/2} |\la| + 2^{-m/2} \de \) \, ,
\]
and further, to $ 2^{-m/2} \de V \le A n 2^{-n/2} |\la| $, which holds
(for $A=1$) by the definition of $V$.

For the lower bound the proof is similar. First,
\begin{equation}\label{3a8}
f_{2^m}(\eps\la) \ge \Big( \frac12 \si_{2^m}^2 \eps^2 - 8
A_\text{\ref*{3a1}} \de \Big) \la^2 \quad \text{for } |\la| \le \de
\end{equation}
similarly to \eqref{3a5}. Second, the rescaling that was applied to
Th.~\ref{1a5} applies now to Theorem \ref{1b7}. The assumptions of
\ref{1b7} become: $ 2^{-m/2} \de \eps < \sqrt2 $ (holds evidently),
and
\[
f_{2^m}(\eps\la) \ge \de^{-2} a \la^2 \quad \text{for } |\la| \le \de
\, ;
\]
the latter holds by \eqref{3a8} provided that
\begin{equation}\label{3a9}
a = \Big( \frac12 \si_{2^m}^2 \eps^2 - 8
A_\text{\ref*{3a1}} \de \Big) \de^2 \, .
\end{equation}
The conclusion of \ref{1b7} becomes
\begin{equation}\label{3a10}
f_{2^n}(\eps\la) \ge \( a - (\sqrt2+1) 2^{-m/2}
\de\eps (a+1) (1+V) \) \de^{-2} \la^2 
\end{equation}
for $ 2^{m/2} |\la| \le 2^{n/2} \de $ (which
holds, since $ |\la| \le 2^{(n-m)/2} \de $), where $a$ is given by
\eqref{3a9}, and $ V = \frac{n-m}{2^{(n-m)/2}} \de^{-1} |\la| $ as
before.

We replace $ \Big( \si_{2^n} + \frac{\sqrt2}{\sqrt2-1} 2^{-m/2}
\Big)^2 - \si_{2^n}^2 $ with $ \si_{2^n}^2 - \Big( \si_{2^n} -
\frac{\sqrt2}{\sqrt2-1} 2^{-m/2} \Big)_+^2 $ (the latter being
smaller). It is sufficient to prove that
\begin{multline*}
\frac12 \si_{2^m}^2 \eps^2 - 8 A_\text{\ref*{3a1}} \de - 
 (\sqrt2+1) 2^{-m/2} \de\eps (a+1)
 (1+V) \de^{-2} \ge \\
\ge \frac12 \eps^2 \Big( \si_{2^n} - \frac{\sqrt2}{\sqrt2-1} 2^{-m/2}
 \Big)_+^2 - A \de - A \eps \( n 2^{-n/2} |\la| + 2^{-m/2} \de \)
 \de^{-2} \, .
\end{multline*}
By Corollary \ref{3a3}, $ \si_{2^m} \ge \si_{2^n} -
\frac{\sqrt2}{\sqrt2-1} 2^{-m/2} $; the needed lower bound inequality
is reduced to
\begin{multline*}
8 A_\text{\ref*{3a1}} \de +
 (\sqrt2+1) 2^{-m/2} \de\eps (a+1)
 (1+V) \de^{-2} \le \\
\le A \de + A \eps \( n 2^{-n/2} |\la| + 2^{-m/2} \de \)
 \de^{-2} \, ,
\end{multline*}
and further to
\[
2^{-m/2} \de (a+1) (1+V) \le A \( n 2^{-n/2}
|\la| + 2^{-m/2} \de \) \, ;
\]
the latter holds by \eqref{3a75}.
\end{proof}

\smallskip

We start proving Theorem \ref{3ath}. According to Remark \ref{invar}
we restrict ourselves to the case $ r=1 $.

\smallskip

\textsc{The first case:} $ |\la| \le \sqrt{n\eps} \cdot 2^{-n/4} $.

We note that $ \sqrt{n\eps} \cdot 2^{-n/4} \le \( (\sqrt2-1) \max_n n
2^{-n/2} \)^{1/2} = \( (\sqrt2-1) \cdot 3 \cdot 2^{-3/2} \)^{1/2} <
2/3 $ and apply Lemma \ref{3a1}: $ \big| \frac1{\la^2} f_{2^n}(\eps\la) -
\frac12 \si_{2^n}^2 \eps^2 \big| \le A_\text{\ref*{3a1}}
\frac{|\la|}{(1-|\la|)^3} \le 27 A_\text{\ref*{3a1}} |\la| $.

\smallskip

\textsc{The second case:} $ \sqrt{n\eps} \cdot 2^{-n/4} \le |\la| \le
2^{n/2} \min \( \sqrt{n\eps}, \frac1{3n\eps}, \frac19 \) $.

Before applying Prop.~\ref{3a4} we choose $ m \in \{0,1,\dots,n-1\} $
and $ \de \in (0,\frac12] $ appropriately; namely, we want them to
satisfy
\begin{gather}
\frac{\de}3 \le (2^{-n/2}n\eps|\la|)^{1/3} \le 3\de \,
 , \label{3a13} \\
\frac1{\sqrt2} \de \le 3 \cdot 2^{-(n-m)/2} |\la| \le \de \,
 . \label{3a14}
\end{gather}

\begin{lemma}\label{3a15}
Let $ 1 \le a < \infty $, $ 0 < x < \infty $, $ 0 < \la \le \min
( \sqrt x, 1/x, 1/a^2 ) $. Then there exists $ \de \in (0,\infty) $
such that $ a \de \le 1 $, $ a \la \le \de $, and
\[
\frac{\de}{a} \le (x\la)^{1/3} \le a\de \, .
\]
\end{lemma}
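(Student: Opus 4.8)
The plan is to turn the three required properties of $\de$ into the statement that an explicit closed interval is non-empty, and then to name a point of it. Writing $y = (x\la)^{1/3}$ for brevity, a $\de$ as demanded must satisfy simultaneously $\de \le 1/a$, $\de \ge a\la$, $\de \le ay$ and $\de \ge y/a$; that is, $\de$ must lie in
\[
\Big[ \, \max(a\la,\, y/a), \ \min(1/a,\, ay) \, \Big] .
\]
Hence it suffices to verify the four inequalities $a\la \le 1/a$, $a\la \le ay$, $y/a \le 1/a$, $y/a \le ay$, and then to pick, say, $\de := \min(1/a,\, ay) > 0$.

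Each of these four inequalities is simply a rewriting of one of the hypotheses. Indeed $a\la \le 1/a \iff \la \le 1/a^2$; since all quantities are strictly positive, $a\la \le ay \iff \la \le y \iff \la^3 \le x\la \iff \la \le \sqrt x$; similarly $y/a \le 1/a \iff y \le 1 \iff x\la \le 1 \iff \la \le 1/x$; and finally $y/a \le ay \iff 1 \le a^2 \iff a \ge 1$. The three bounds $\la \le \sqrt x$, $\la \le 1/x$, $\la \le 1/a^2$ are exactly the hypothesis $\la \le \min(\sqrt x, 1/x, 1/a^2)$, and $a \ge 1$ is assumed, so all four hold. Consequently the interval above is non-empty; the concrete choice $\de = \min(1/a,\, a(x\la)^{1/3})$ satisfies $a\de \le 1$ and $\de/a \le (x\la)^{1/3} \le a\de$ directly from its definition, and it dominates both $a\la$ and $(x\la)^{1/3}/a$ by the four inequalities just checked, hence also $a\la \le \de$.

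There is no real obstacle here: the lemma is a packaging step, and its only content is the observation that the three upper bounds imposed on $\la$ are precisely the compatibility conditions needed to squeeze $\de$ between $a\la$ and $1/a$ while keeping $(x\la)^{1/3}$ within a factor $a$ of $\de$. The single point requiring a word of care is that cubing and taking cube roots preserve the relevant inequalities because everything in sight is strictly positive; the strict-versus-non-strict distinction plays no role, since we only need membership in a closed, finite interval.
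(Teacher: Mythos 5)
Your proof is correct and follows essentially the same route as the paper: both reduce the existence of $\de$ to the non-emptiness of the closed interval $[\max(a\la,\,a^{-1}(x\la)^{1/3}),\ \min(a^{-1},\,a(x\la)^{1/3})]$ and then check the cross-inequalities, each of which is a rephrasing of one hypothesis. The only cosmetic difference is that you spell out the fourth inequality ($y/a\le ay$, i.e.\ $a\ge1$) and exhibit an explicit choice of $\de$, whereas the paper leaves that inequality tacit and stops at non-emptiness.
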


\begin{proof}
Existence of $ \de $ such that $ \de \le a^{-1} $, $ \de \ge a\la $,
$ \de \ge a^{-1} (x\la)^{1/3} $, $ \de \le a (x\la)^{1/3} $ is
equivalent to the inequality
\[
\max ( a\la, a^{-1} (x\la)^{1/3} ) \le \min ( a^{-1}, a (x\la)^{1/3}
) \, ,
\]
thus, to the three inequalities
\begin{gather*}
a\la \le a^{-1} \, , \quad \text{that is,} \quad \la \le a^{-2} \,
 ; \\
a\la \le a (x\la)^{1/3} \, , \quad \text{that is,} \quad \la \le \sqrt
 x \, ; \\
a^{-1} (x\la)^{1/3} \le a^{-1} \, , \quad \text{that
 is,} \quad \la \le x^{-1} \, . \qedhere
\end{gather*}
\end{proof}

Lemma \ref{3a15}, applied to $ a = 3 $,
$x=n\eps$ and $ 2^{-n/2} |\la| $ in place of $ \la $, gives $ \de $
such that $ 3\de \le 1 $ (and therefore $ \de < \frac12 $, as
required), $ 3 \cdot 2^{-n/2} |\la| \le \de $, and \eqref{3a13} holds.

By \eqref{3a13}, $ \( \frac{\de}3 \)^3 \le 2^{-n/2} n\eps |\la| $;
on the other hand, $ 2^{-n/2} n\eps \le |\la|^2 $; therefore $ \(
\frac{\de}3 \)^3 \le |\la|^3 $, that is, $ \de \le 3|\la| $.

Having $ \frac{\de}{3|\la|} \in [2^{-n/2},1] = \cup_{m=0}^{n-1}
[2^{-(n-m)/2}, \sqrt2 \cdot 2^{-(n-m)/2}] $, we take $m$ such that $ 
\frac{\de}{3|\la|} \in [2^{-(n-m)/2}, \sqrt2 \cdot 2^{-(n-m)/2}] $,
which ensures \eqref{3a14}.

In order to apply Prop.~\ref{3a4} we have to check that
\begin{equation}\label{3a16}
|\la| \le \frac{ 2^{n/2} \de }{ 2n\eps\de + \max( \sqrt{2n} \de\eps,
2^{m/2} ) } \, .
\end{equation}
We know that $ |\la| \le \frac{2^{n/2}}{3n\eps} $; also, $ |\la| \le
\frac13 \cdot 2^{(n-m)/2} \de $ by \eqref{3a14}; thus, \eqref{3a16} is
reduced to
\[
\min \Big( \frac1{3n\eps}, \frac13 \cdot 2^{-m/2} \de \Big) \cdot \max \(
2n\eps\de + \sqrt{2n} \eps \de, 2n\eps\de + 2^{m/2} \) \le \de \, ,
\]
that is,
\[
\min \Big( \frac1{3n\eps}, \frac13 \cdot 2^{-m/2} \de \Big) \cdot \max \Big(
2n \( 1+\tfrac1{\sqrt{2n}} \) \eps, 2n\eps + 2^{m/2} \de^{-1} \Big)
\le 1 \, .
\]
The left-hand side does not exceed\footnote{%
 Since $ n \ge 2 $, and $ \min(x,y) \cdot \max(u,v+w) \le
 \max(xu,xv+yw) $ for $ u,v,w \ge 0 $.}
\[
\max \Big( \frac1{3n\eps} \( 1+\tfrac12 \) \cdot 2n\eps,
\frac1{3n\eps} \cdot 2n\eps + \frac13 \cdot 2^{-m/2} \de \cdot 2^{m/2}
\de^{-1} \Big) = \max (1,1) = 1 \, .
\]
So, \eqref{3a16} holds; Prop.~\ref{3a4} applies, and gives the upper
bound
\[
\frac12 \eps^2 \bigg( \Big( \si_{2^n} + \frac{\sqrt2}{\sqrt2-1}
2^{-m/2} \Big)^2 - \si_{2^n}^2 \bigg) + A_\text{\ref*{3a4}} \de +
A_\text{\ref*{3a4}} \eps \( n 2^{-n/2} |\la| + 2^{-m/2} \de \) \de^{-2}
\, ;
\]
we want to majorize this by $ \const \cdot (2^{-n/2}n\eps|\la|)^{1/3} $ or,
equivalently, by $ \const \cdot \de $ (see \eqref{3a13}).

Below, $ \cO(x) $ means something majorized by $ \const \cdot \, x $ with
some \emph{absolute} constant. We have
\begin{align}
& \de = \cO(1) && \text{\small since } \de \le \tfrac12 \, ; \label{3a17} \\
& \eps \si_{2^n} = \cO(1) && \text{\small since } \eps \si_{2^n} \le 2\sqrt2 \,
 , \text{ \small as noted after \eqref{3a75}} \, ; \label{3a18} \\
& 2^{-n/2} n \eps |\la| = \cO(\de^3) && \text{\small by \eqref{3a13}: }
 2^{-n/2} n \eps |\la| \le (3\de)^3 \, ; \label{3a19} \\
& 2^{-m/2} = \cO(2^{-n/2}|\la|\de^{-1}) && \text{\small by
 \eqref{3a14}: } 2^{-m/2} \le \sqrt2 \cdot 3 \cdot 2^{-n/2}|\la|\de^{-1} \,
 ; \label{3a20} \\
& 2^{-m/2}\eps = \cO(\de^2) && \text{\small by \eqref{3a20} and
  \eqref{3a19}} \, . \label{3a21}
\end{align}
Thus, by \eqref{3a21} and \eqref{3a18},
\[
\eps^2 \bigg( \Big( \si_{2^n} + \frac{\sqrt2}{\sqrt2-1} 2^{-m/2}
\Big)^2 - \si_{2^n}^2 \bigg) =
\underbrace{ \cO( 2^{-m/2}\eps \cdot \eps \si_{2^n} ) }_{=\cO(\de^2)} +
\underbrace{ \cO( 2^{-m} \eps^2 ) }_{=\cO(\de^4)} = \cO(\de^2) =
\cO(\de) \, ;
\]
and finally, by \eqref{3a19} and \eqref{3a21},
\[
\eps \( n 2^{-n/2} |\la| + 2^{-m/2} \de \) \de^{-2} = \( \cO(\de^3) +
\cO(\de^2) \de \) \de^{-2} = \cO(\de) \, .
\]

\smallskip

\textsc{The third case:} $ 2^{n/2} \sqrt{n\eps} \le |\la| \le
2^{n/2} \min( \frac1{3n\eps}, \frac19 ) $.

We want to apply Prop.~\ref{3a4} for $ m=0 $ and $ \de = 3 \cdot
2^{-n/2}|\la| $ (as required, $ \de < \frac12 $). To this end we check
that
\[
|\la| \le \frac{ 3|\la| }{ 6n\eps \cdot 2^{-n/2}|\la| + \max( 3 \sqrt{2n}
  2^{-n/2}|\la| \eps, 1 ) } \, ,
\]
that is,
\[
\max \( (2n+\sqrt{2n}) \eps 2^{-n/2}|\la|, \, 2n\eps 2^{-n/2}|\la| +
\tfrac13 \) \le 1 \, .
\]
The left-hand side does not exceed
\[
\max \Big( 2n \( 1+\tfrac12 \) \eps \cdot
\frac1{3n\eps}, 2n\eps \cdot \frac1{3n\eps} + \frac13
\Big) = \max (1,1) = 1 \, .
\]
So, Prop.~\ref{3a4} applies, and gives the upper bound
\begin{multline*}
\frac12 \eps^2 \bigg( \Big( \si_{2^n} + \frac{\sqrt2}{\sqrt2-1}
 \Big)^2 - \si_{2^n}^2 \bigg) + \\
+ 3 A_\text{\ref*{3a4}} 2^{-n/2}|\la| +
 A_\text{\ref*{3a4}} \eps \( n 2^{-n/2} |\la| + 3 \cdot 2^{-n/2}|\la|
 \) (3 \cdot 2^{-n/2}|\la|)^{-2} = \\
= \cO(\eps^2 \si_{2^n}) + \cO(\eps^2) + \cO(2^{-n/2}|\la|) +
 \cO\Big(\frac{\eps n2^{n/2}}{|\la|}\Big) +
 \cO\Big(\frac{2^{n/2}\eps}{|\la|}\Big) \, ;
\end{multline*}
we want to majorize this by $ 2^{-n/2}|\la| $.

We have
\begin{align}
& 2^{-n/2}|\la| = \cO(1) && \text{\small since } |\la| \le 2^{n/2} \cdot
 \tfrac19 \, ; \label{3a22} \\
& \eps \le n\eps = \cO\( (2^{-n/2}|\la|)^2 \) && \text{\small since }
 2^{n/2} \sqrt{n\eps} \le |\la| \, ; \label{3a23} \\
& \eps = \cO(2^{-n/2}|\la|) && \text{\small by \eqref{3a23} and
 \eqref{3a22}} \, ; \label{3a24} \\
& \eps^2 \si_{2^n} = \cO(2^{-n/2}|\la|) && \text{\small by
   \eqref{3a24} and \eqref{3a18}} ; \notag \\
& \eps^2 = \cO(2^{-n/2}|\la|) && \text{\small by \eqref{3a24} and
 \eqref{3a22}} ; \notag
\end{align}
and finally, $ \frac{\eps n2^{n/2}}{|\la|} \le \frac{
  (2^{-n/2}|\la|)^2 \cdot 2^{n/2} }{ |\la| } = 2^{-n/2}|\la| $.

Theorem \ref{3ath} is proved.

\begin{corollary}\label{3cor}
Under the assumptions of Theorem \ref{3ath},
\[
\Big| \frac1{\la^2} f_{2^n r}(\eps\la\sqrt r) - \frac12 r \si_{2^n
  r}^2 \eps^2 \Big| \le A (2^{-n/2}n\eps|\la|)^{1/3}
\]
for all $\la$ such that $ 0 < |\la| \le 2^{n/2} \min( \frac1{3n\eps},
\frac19, \sqrt{n\eps} ) $.
\end{corollary}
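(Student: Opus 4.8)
The plan is to deduce the corollary directly from Theorem~\ref{3ath} by a short case analysis, exploiting the explicit piecewise formula for the median displayed just after the statement of Theorem~\ref{3ath}. Since the admissible range of $\la$ in the corollary,
\[
0<|\la|\le 2^{n/2}\min\Big(\tfrac1{3n\eps},\tfrac19,\sqrt{n\eps}\Big),
\]
is contained in the admissible range of Theorem~\ref{3ath}, that theorem already gives
\[
\Big| \frac1{\la^2} f_{2^n r}(\eps\la\sqrt r) - \frac12 r \si_{2^n r}^2 \eps^2 \Big|
\le A\cdot\Median\( |\la|,\ (2^{-n/2}n\eps|\la|)^{1/3},\ 2^{-n/2}|\la| \),
\]
so the whole task reduces to checking that, on this smaller range, the median is dominated by $(2^{-n/2}n\eps|\la|)^{1/3}$.

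First I would note that the extra hypothesis $|\la|\le 2^{n/2}\sqrt{n\eps}$ is exactly what rules out the third branch of the median (the branch equal to $2^{-n/2}|\la|$, which is in force precisely for $|\la|\ge\sqrt{n\eps}\cdot 2^{n/2}$). Hence only two branches can occur. On the middle branch, $\sqrt{n\eps}\cdot 2^{-n/4}\le|\la|\le\sqrt{n\eps}\cdot 2^{n/2}$, the median \emph{equals} $(2^{-n/2}n\eps|\la|)^{1/3}$, so there is nothing to prove. On the first branch, $|\la|\le\sqrt{n\eps}\cdot 2^{-n/4}$, the median equals $|\la|$; squaring the branch inequality gives $|\la|^2\le n\eps\cdot 2^{-n/2}$, hence $|\la|^3\le 2^{-n/2}n\eps|\la|$, that is, $|\la|\le(2^{-n/2}n\eps|\la|)^{1/3}$. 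In both cases $\Median(\,\cdot\,)\le(2^{-n/2}n\eps|\la|)^{1/3}$, and combining with the displayed inequality yields the assertion with the same absolute constant $A$.

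I do not expect any real obstacle: the argument is purely bookkeeping with the three regimes of the median. The one place warranting a second look is making sure that the threshold $2^{n/2}\sqrt{n\eps}$ in the corollary's hypothesis coincides verbatim with the right endpoint $\sqrt{n\eps}\cdot 2^{n/2}$ of the middle regime, so that no intermediate case can intrude between the two surviving branches; it does.
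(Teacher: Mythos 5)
Your argument is correct and is precisely the (implicit) proof the paper intends: the paper states the corollary without a proof, relying on the reader to combine Theorem~\ref{3ath} with the piecewise formula for the median displayed immediately after it, which is exactly what you do. The only slight imprecision is saying the extra hypothesis "rules out" the third branch — at the boundary $|\la|=\sqrt{n\eps}\cdot 2^{n/2}$ the third branch is nominally in force, but there the second and third entries of the median coincide (both equal $\sqrt{n\eps}$), so the median still equals $(2^{-n/2}n\eps|\la|)^{1/3}$ and the conclusion holds; your case analysis otherwise handles everything correctly.
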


\subsection{Main result: proof}

We return to the numbers $ \si_r $ introduced by Prop.~\ref{2d3}. For
every $ r \in (0,\infty) $ the limit
\[
\si_{2^\infty r} = \lim_{n\to\infty} \si_{2^n r}
\]
exists by \ref{3a3}.

\begin{lemma}
$ \si_{2^\infty r} $ does not depend on $r$.
\end{lemma}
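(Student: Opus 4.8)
The plan is to show that $\si_{2^\infty r}$ takes the same value for all $r$ by relating $\si_{2^n r}$ across different base points $r$, using the two-sided ``doubling'' estimates already established. The key observation is that Corollary~\ref{3cor} gives, for each admissible $\eps$, a good quadratic approximation of $f_{2^n r}(\eps\la\sqrt r)$ with error $O\big((2^{-n/2}n\eps|\la|)^{1/3}\big)\la^2$, and this same function $f_{2^n r}$ can be compared to $f_{2^n r'}$ for a nearby base point $r'$ via Remark~\ref{2a10} (the general H\"older inequalities with $s=2^n r'-2^n r$). So the strategy is: fix two base points $r$ and $r'$, choose a small $\eps$ (allowed by Prop.~\ref{2d2} and Remark~\ref{2d2a}(c), which lets a single $\eps$ serve all $r\ge c$), and show $\si_{2^\infty r}=\si_{2^\infty r'}$ by squeezing $\tfrac12 r\si_{2^n r}^2\eps^2$ and $\tfrac12 r'\si_{2^n r'}^2\eps^2$ against a common quantity as $n\to\infty$.

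More concretely, I would first reduce to comparing $r$ and $2r$ (since $\si_{2^\infty(2r)}=\si_{2^\infty r}$ is immediate from the definition $\si_{2^\infty r}=\lim_n \si_{2^n r}$ being insensitive to shifting the index by one), and then reduce to comparing $r$ with $r'$ for $r'$ in a fixed compact interval by a telescoping/rescaling argument; but cleanest is probably to compare $\si_{2^n r_1}$ directly with $\si_{2^n r}$ for $r\in[r_1,2r_1]$. For such $r$, applying Remark~\ref{2a10}(a),(b) to $r_1':=2^n r$ written as a sum $2^n r_1 + (2^n r - 2^n r_1)$ — or rather running the two-sided bounds between $f_{2^{n}r_1}$ and $f_{2^{n}r}$ — one gets $|f_{2^n r}(\la) - f_{2^n r_1}(\la)|$ controlled by $O(\la^2/2^n r_1)$ plus a multiplicative distortion of the argument by a factor $\sqrt{r/r_1}=1+O(2^{-n/2}\cdot\text{const})$... actually the argument distortion is $O(1)$, not small, so this direct route needs care.

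The hard part will be exactly this: the H\"older comparison between $f$'s at different base scales distorts the $\la$-argument by a bounded-but-not-small factor, so one cannot directly read off equality of the quadratic coefficients from it. The fix is to use Corollary~\ref{3cor} on \emph{both} sides: it tells us $f_{2^n r}(\eps\la\sqrt r)=\tfrac12 r\si_{2^n r}^2\eps^2\la^2 + o(\la^2)$ uniformly (the error is $O\big((2^{-n/2}n\eps|\la|)^{1/3}\big)\la^2\to 0$ as $\eps\to 0$ for fixed $n,\la$, and also $\to 0$ as $n\to\infty$ in the right regime). So for each fixed small $\la$ we have $r\si_{2^\infty r}^2 = \lim_{\eps\to 0}\lim_{n\to\infty}\eps^{-2}\la^{-2} f_{2^n r}(\eps\la\sqrt r)$ up to errors that vanish; now feed in the inequality relating $f_{2^n r}$ and $f_{2^n r_1}$ (which has error $O(\la^2/2^n r_1)$, genuinely small) after using Remark~\ref{2a5}-type rescaling to align the arguments, i.e.\ replace $\la\sqrt r$ by $\la\sqrt{r_1}$ via the substitution $\la\mapsto\la\sqrt{r_1/r}$, which is legitimate since $\si_r$ and the whole setup are scaling-covariant (Remark~\ref{invar}). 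This forces $r\si_{2^\infty r}^2 = r_1\si_{2^\infty r_1}^2$ for all $r\in[r_1,2r_1]$; combined with the $r\mapsto 2r$ step and Corollary~\ref{3a3} (which gives $\si_{2^n r}\to\si_{2^\infty r}$ with a rate, so the limits are robust), conclude $r\si_{2^\infty r}^2$ is constant in $r$ — but wait, we want $\si_{2^\infty r}$ itself constant, so the cleanest honest route is: observe directly from Corollary~\ref{3a3} that $|\si_{2^\infty r}-\si_{2^\infty(2r)}|=0$ since both equal $\lim_n\si_{2^n r}$ (the sequence $(\si_{2^n r})_n$ and $(\si_{2^{n+1}r})_n$ are the same sequence shifted), so $\si_{2^\infty r}=\si_{2^\infty 2^k r}$ for all integers $k$; then for a general ratio use the scaling covariance of the entire family of hypotheses (\eqref{start}, \eqref{bstart}, Prop.~\ref{2d2}, \ref{2d3}) under \eqref{1.2}, which shows $\si_{2^\infty(s^2 r)}=s^{-1}\cdot(\text{the }\si\text{ for the rescaled family})$... this last point is the genuine obstacle and I would resolve it by noting that the rescaled family $g_r(\la)=f_{s^2 r}(s\la)$ has $\si^{(g)}_{\rho}=s\,\si_{s^2\rho}$ (from $\tfrac12(\si^{(g)}_\rho)^2\la^2 + o = g_\rho(\la)=f_{s^2\rho}(s\la)=\tfrac12\si_{s^2\rho}^2 s^2\la^2+o$), hence $\si^{(g)}_{2^\infty\rho}=s\,\si_{2^\infty(s^2\rho)}$; but $g$ satisfies the \emph{same} hypotheses as $f$, and the quantity $\si_{2^\infty}$ produced from $g$ via the already-proved ``constant on powers of $2$'' fact, together with Corollary~\ref{3cor} applied to $g$ giving a \emph{base-point-independent} limiting value in the $n\to\infty,\ \eps\to 0$ regime, forces $\si^{(g)}_{2^\infty\rho}$ to have the same numerical value as $\si^{(f)}_{2^\infty\rho'}$ — and this common value is what Theorem~\ref{theorem1} calls $\si$. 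I expect the write-up to invoke Corollary~\ref{3cor} with $n\to\infty$ to identify $\tfrac12\si_{2^\infty r}^2$ as the limit in Theorem~\ref{theorem1} restricted to $r$ along the dyadic sequence $2^n r$, observe that limit is manifestly independent of the starting $r$ (being a limit over an increasing parameter), and deduce the lemma; the routine scaling bookkeeping is the only remaining content.
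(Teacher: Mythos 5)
There is a genuine gap, and your own write-up flags it but never closes it. Each of the three routes you sketch stalls. (i)~The direct comparison of $f_{2^n r}$ with $f_{2^n r_1}$ via Remark~\ref{2a10} has the ``argument distortion'' problem you yourself observe: the $\la$-argument is scaled by $\sqrt{r/r_1}=O(1)$, not by $1+o(1)$. (ii)~The scaling-covariance route does not connect dyadic orbits: from $g_\rho(\la)=f_{s^2\rho}(s\la)$ you correctly derive $\si^{(g)}_{2^\infty\rho}=s\,\si^{(f)}_{2^\infty(s^2\rho)}$, but $\si^{(g)}_{2^\infty}$ is \emph{not} numerically equal to $\si^{(f)}_{2^\infty}$ --- for white noise $\si^{(f)}_r\equiv1$ while $\si^{(g)}_\rho\equiv s$ --- so the relation only transports the dyadic-invariance you already have, yielding no new information. (iii)~The final fall-back of identifying $\tfrac12\si^2_{2^\infty r}$ with the limit in Theorem~\ref{theorem1} is circular: the proof of Theorem~\ref{theorem1} invokes \eqref{3a27}, which is a \emph{consequence} of this lemma. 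Also, the intermediate claim ``this forces $r\si_{2^\infty r}^2 = r_1\si_{2^\infty r_1}^2$ for $r\in[r_1,2r_1]$'' cannot be right as stated, since it would make $\si_{2^\infty r}$ decrease like $r^{-1/2}$, contradicting the very thing you are trying to prove.

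The paper's proof is quite different and avoids all of this. It works with the function $h(r):=r\,\si^2_{2^\infty r}$ and proves it is \emph{additive}, $h(r+s)=h(r)+h(s)$, not that it agrees at two points. Additivity comes from first dividing the two inequalities of Remark~\ref{2a10} by $\la^2/2$ and letting $\la\to0$ (so only the quadratic coefficients $\si_r^2,\si_s^2,\si_{r+s}^2$ survive --- this is precisely what sidesteps the argument-distortion issue, because the distortion factor $p^2\cdot\frac r{r+s}$ just multiplies the extracted coefficient), then applying the result at $2^n r,\,2^n s$ and letting $n\to\infty$ to kill the error term $\frac p{p-1}\cdot\frac2{2^n(r+s)}$, and finally letting $p\to1$. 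Measurability of $r\mapsto\si_{2^\infty r}$ follows from \eqref{01.2} via $\si_r^2=\lim_{\la\to0}\frac2{\la^2}f_r(\la)$. An additive, measurable function on $(0,\infty)$ is linear, so $h(r)=cr$, i.e.\ $\si^2_{2^\infty r}=c$. You had Remark~\ref{2a10} in hand as a tool, but the missing idea is to prove additivity of $r\si^2_{2^\infty r}$ and appeal to the Cauchy functional equation, rather than trying to compare two base points directly.
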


\begin{proof}
We'll prove that the function $ r \mapsto r\si^2_{2^\infty r} $ is
linear. It is sufficient to prove that it is additive,
\begin{equation}\label{*}
(r+s) \si^2_{2^\infty (r+s)} = r \si^2_{2^\infty r} + s
\si^2_{2^\infty s} \, ,
\end{equation}
and measurable.

For every $\la$ the function $ r \mapsto f_r(\la) $ is measurable due
to \eqref{01.2}, which implies measurability of the functions $ r
\mapsto \si_r^2 = \lim_{\la\to0} \frac2{\la^2} f_r(\la) $ and $ r
\mapsto \si_{2^\infty r} $.

Multiplying by $ \frac2{\la^2} $ the inequality \ref{2a10}(a) and
taking the limit as $ \la \to 0 $ we get
\[
\si^2_{r+s} \le \frac1p \cdot p^2 \frac{r}{r+s} \si_r^2 + \frac1p
\cdot p^2 \frac{s}{r+s} \si_s^2 + \frac{p}{p-1} \frac{2}{r+s} \, ;
\]
applying it to $ 2^n r $, $ 2^n s $ and taking the limit as $ n \to
\infty $ we get
\[
\si^2_{2^\infty(r+s)} \le p \frac{r}{r+s} \si_{2^\infty r}^2 + p
\frac{s}{r+s} \si_{2^\infty s}^2
\]
for all $ p>1 $ and therefore for $ p=1 $. Similarly, the inequality
\[
(r+s) \si^2_{2^\infty(r+s)} \ge r \si_{2^\infty r}^2 + s \si_{2^\infty
s}^2
\]
follows from \ref{2a10}(b), and we get \eqref{*}.
\end{proof}

Now we have $ \si \in [0,\infty) $ such that $ \si_{2^n r} \to \si $
(as $ n \to \infty $) for all $ r \in (0,\infty) $; applying \ref{3a3}
to $ m=0 $ and $ n \to \infty $ we get
\begin{equation}\label{3a27}
| \si_r - \si | \le \frac{\sqrt2}{\sqrt2-1} \frac1{\sqrt r} \, ;
\qquad\quad
\si_r \to \si \quad \text{\small as } r \to \infty \, .
\end{equation}

\begin{proof}[{\bfseries Proof} {\sffamily of Theorem \ref{theorem1}}]

Assumption \ref{2a1} applies without loss of generality. Remark
\ref{2d2a}(c) gives $\eps$ such that
\[
f_r(\eps\la) \le \la^2 \qquad \text{for all } \la \in [-1,1] \text{
  and } r \in [\tfrac12, 1] \, ,
\]
which ensures the condition of Th.~\ref{3ath}: $ f_r (\eps\la\sqrt
r) \le \la^2 $ for these $ \la $ and $ r $ (if $ \eps < \sqrt2 - 1 $;
otherwise take a smaller $\eps$). Corollary \ref{3cor} applied to $
\eps $ and $ 2^{-n} r $ gives, whenever $ 2^{-n} r \in [\frac12,1] $,
\[
\Big| \frac1{\la^2} f_r(\eps\la2^{-n/2}\sqrt r) - \frac12 \cdot 2^{-n}
r \si_r^2 \eps^2 \Big| \le A (2^{-n/2}n\eps|\la|)^{1/3}
\]
for $ 0 < |\la| \le 2^{n/2} \min( \frac1{3n\eps}, \frac19,
\sqrt{n\eps} ) $. We replace $\la$ with $ 2^{n/2} \la / \eps $:
\[
\Big| \frac{\eps^2}{2^n \la^2} f_r(\la\sqrt r) - \frac12 \cdot 2^{-n}
r \si_r^2 \eps^2 \Big| \le A (n|\la|)^{1/3}
\]
for $ 0 < |\la| \le \min( \frac1{3n}, \frac{\eps}{9}, \eps\sqrt{n\eps}
) $. Thus,
\[
\Big| \frac1{r \la^2} f_r(\la\sqrt r) - \frac12 \si_r^2 \Big| \le A
\cdot \frac{2^n}{\eps^2 r} (n|\la|)^{1/3} \le \frac{2A}{\eps^2}
(n|\la|)^{1/3} \le \frac{2A}{\eps^2} \Big( |\la| \frac{\log 2r}{\log
  2} \Big)^{1/3}
\]
is small whenever $r$ is large and $ |\la| \log r $ is small. Also, $
\si_r^2 $ is close to $ \si^2 $ by \eqref{3a27}.
\end{proof}

\begin{proof}[{\bfseries Proof} {\sffamily of Corollary \ref{corollary3}}]
Let $ r_n \to \infty $, $ c_n \to \infty $, $ (c_n \log r_n )^2 / r_n \to 0 $;
we have to prove that
\[
\frac1{c_n^2} \log \PR{ \int_0^{r_n} X_t \, \D t \ge c_n \si \sqrt{r_n}
} \to -\frac12 \quad \text{as } n \to \infty \, .
\]
Theorem \ref{theorem1} applied to $ r_n $ and $ \la_n = \la c_n /
\sqrt{r_n} $ gives
\[
\frac1{c_n^2} \log \Ex \exp \frac{ \la c_n }{ \sqrt{r_n} }
\int_0^{r_n} X_t \, \D t \to \frac{\si^2}2 \la^2 \quad \text{as } n
\to \infty
\]
for all $ \la \in \R $. By the G\"artner-Ellis theorem \cite{Ell} (with the
scale $ c_n $ and speed $ c_n^2 $), random variables $ \frac1{ c_n
\sqrt{r_n} } \int_0^{r_n} X_t \, \D t $ satisfy MDP with the rate function $
x \mapsto \frac{ x^2 }{ 2\si^2 } $.
\end{proof}

\begin{proof}[{\bfseries Proof} {\sffamily of Corollary \ref{corollary4}}]
For every $ \la \ne 0 $ Theorem \ref{theorem1} applied to $ r $ and $
\la/\sqrt r $ gives
\[
\frac1{ r (\la/\sqrt r)^2 } \log \Ex \exp \frac{\la}{\sqrt r} \int_0^r
X_t \, \D t \to \frac{\si^2}2 \quad \text{as } r \to \infty \, ,
\]
that is,
\[
\Ex \exp \la \cdot \frac1{\sqrt r} \int_0^r X_t \, \D t \to \exp \Big(
\frac12 \si^2 \la^2 \Big) \quad \text{as } t \to \infty \, .
\]
The weak convergence of distributions follows, see for example \cite[Sect.~30,
p.~390]{Bi}.
\end{proof}

\bigskip
\filbreak
{
\small
\begin{sc}
\parindent=0pt\baselineskip=12pt
\parbox{4in}{
Boris Tsirelson\\
School of Mathematics\\
Tel Aviv University\\
Tel Aviv 69978, Israel
\smallskip
\par\quad\href{mailto:tsirel@post.tau.ac.il}{\tt
 mailto:tsirel@post.tau.ac.il}
\par\quad\href{http://www.tau.ac.il/~tsirel/}{\tt
 http://www.tau.ac.il/\textasciitilde tsirel/}
}

\end{sc}
}
\filbreak

\end{document}